\documentclass[10pt,a4paper,english]{amsart}
\usepackage{babel}
\usepackage[numbers]{natbib}
\usepackage{amsfonts, amsmath, wasysym}
\usepackage{amssymb, amsthm}
\usepackage{mathrsfs}
\usepackage{verbatim}
\newfont{\ffont}{cmr10}

\allowdisplaybreaks

\newcommand{\cE}{\mathcal{E}}

\newcommand{\cM}{\mathcal{M}}
\newcommand{\cN}{\mathcal{N}}

\newcommand{\id}{\mathbf{1}}

\newcommand{\vNT}{\overline{\otimes}}

\newcommand{\ot}{\otimes}
\newcommand{\tr}{\tau}
\newcommand{\al}{\alpha}

\newcommand{\si}{\sigma}

\newcommand{\R}{\mathbb{R}}
\newcommand{\C}{\mathbb{C}}
\newcommand{\N}{\mathbb{N}}

\newcommand{\ti}{\times}

\newcommand{\epf}{}

\newcommand{\xspace}{\hbox{\kern-2.5pt}}

\hyphenation{Ro-sen-thal}
\hyphenation{Burk-holder}
\hyphenation{Rade-macher}

\renewcommand{\labelenumi}{(\alph{enumi})}

\begin{document}

\newtheorem{definition}{Definition}[section]
\newtheorem{theorem}[definition]{Theorem}
\newtheorem{conjecture}[definition]{Conjecture}
\newtheorem{proposition}[definition]{Proposition}
\newtheorem{corollary}[definition]{Corollary}
\newtheorem{remark}[definition]{Remark}
\newtheorem{lemma}[definition]{Lemma}
\newtheorem{example}{Example}[section]
\newtheorem{exercise}{Exercise}[section]

\title{Noncommutative Boyd interpolation theorems}
\author{Sjoerd Dirksen}

\address{Universit\"{a}t Bonn\\
Hausdorff Center for Mathematics\\
Endenicher Allee 60\\
53115 Bonn\\
Germany}
\email{sjoerd.dirksen@hcm.uni-bonn.de}

\thanks{This research was supported by VICI subsidy 639.033.604 of the Netherlands Organisation for Scientific Research (NWO) and the Hausdorff Center for Mathematics}
\keywords{Boyd interpolation theorem, noncommutative symmetric spaces, $\Phi$-moment inequalities, Doob maximal inequality, Burkholder-Davis-Gundy inequalities, Burkholder-Rosenthal inequalities}
\maketitle

\begin{abstract}
We present a new, elementary proof of Boyd's interpolation theorem. Our approach naturally yields a noncommutative version of this result and even allows for the interpolation of certain operators on $l^1$-valued noncommutative symmetric spaces. By duality we may interpolate several well-known noncommutative maximal inequalities. In particular we obtain a version of Doob's maximal inequality and the dual Doob inequality for noncommutative symmetric spaces. We apply our results to prove the Burkholder-Davis-Gundy and Burkholder-Rosenthal inequalities for noncommutative martingales in these spaces.
\end{abstract}

\section{Introduction}

Symmetric Banach function spaces play a pivotal role in many fields of mathematical analysis, especially probability theory, interpolation theory and harmonic analysis.
A cornerstone result in the interpolation theory of these spaces is the Boyd interpolation theorem, named after D.W.\ Boyd. Together with the Calder\'{o}n-Mitjagin
theorem, which characterizes the symmetric Banach function spaces which are an exact interpolation space for the couple $(L^1(\R_+),L^{\infty}(\R_+))$, Boyd's theorem
provides an invaluable tool for the analysis of symmetric spaces.\par The history of Boyd's interpolation theorem begins with the announcement of Marcinkiewicz
\cite{Mar39}, shortly before his death, of an extension of the Riesz-Thorin theorem. Let us say that a sublinear operator $T$ is of \emph{Marcinkiewicz weak type}
$(p,p)$ if for any $f \in L^p(\R_+)$,
$$d(v;Tf)^{\frac{1}{p}} \leq C v^{-1}\|f\|_{L^p(\R_+)} \qquad (v>0),$$
where $d(\cdot;Tf)$ denotes the distribution function of $Tf$. Marcinkiewicz demonstrated that if a sublinear operator $T$ is simultaneously of Marcinkiewicz weak types
$(p,p)$ and $(q,q)$ for $1\leq p<q\leq\infty$, then $T$ is bounded on $L^r(\R_+)$, for any $p<r<q$. A full proof of this result was published years later by Zygmund
\cite{Zyg56}, based on Marcinkiewicz' notes. Soon after it was observed by Stein and Weiss \cite{StW59} that Marcinkiewicz' result is valid for the larger class of
operators which are simultaneously of \emph{weak types} $(p,p)$ and $(q,q)$. Here $T$ is said to be of weak type $(p,p)$ if for any $f$ in the Lorentz space
$L^{p,1}(\R_+)$,
$$d(v;Tf)^{\frac{1}{p}}\leq C v^{-1}\|f\|_{L^{p,1}(\R_+)}.$$
The class of sublinear operators which are simultaneously of weak types $(p,p)$ and $(q,q)$ was subsequently characterized by Calder\'{o}n \cite{Cal66} as consisting of
precisely those maps $T$ which satisfy
$$\mu_t(Tf)\leq C S_{p,q}(\mu(f))(t) \qquad (t>0),$$
where $\mu(f)$ denotes the decreasing rearrangement of $f$ and $S_{p,q}$ is a linear integral operator which is nowadays known as Calder\'{o}n's operator. Finally, in
\cite{Boy69} Boyd introduced two indices $p_E$ and $q_E$ for any symmetric Banach function space $E$ on $\R_+$ and showed that the operator $S_{p,q}$ is bounded on $E$
precisely when $p<p_E\leq q_E<q$. Together with Calder\'{o}n's characterization, this yields Boyd's interpolation theorem: every sublinear operator which is simultaneously of weak
types $(p,p)$ and $(q,q)$ is bounded on $E$ if and only if $p<p_E\leq q_E<q$.\par In this paper we are concerned with obtaining a generalization of Boyd's result to
noncommutative and, to a limited extent, also noncommutative vector-valued symmetric Banach function spaces. As it turns out, the original approach sketched above remains feasible in the noncommutative setting (see the appendix of this paper), but becomes problematic for noncommutative vector-valued spaces. We develop a new, elementary approach to Boyd's interpolation theorem for the class of Marcinkiewicz weak type operators. Our approach consists of two observations, which are close in spirit to the original approach. Firstly, we characterize the sublinear operators of simultaneous Marcinkiewicz weak types $(p,p)$ and $(q,q)$ as being exactly those which for some $\alpha>0$ satisfy the inequality
\begin{equation*}
\displaystyle d(\alpha v;Tf)\leq d(v;\Theta_{p,q}f) \qquad (v>0),
\end{equation*}
where $\Theta_{p,q}$ is the linear operator defined by
$$\Theta_{p,q}f(s,t)=f(s)(t^{-\frac{1}{q}}\chi_{(0,1)}(t) + t^{-\frac{1}{p}}\chi_{[1,\infty)}(t)).$$
Secondly, we show that $\Theta_{p,q}$ is bounded on $E$ if $p<p_E\leq q_E<q$. Our approach immediately extends to yield both a vector-valued and a noncommutative version
of Boyd's result. Moreover, all results are valid for symmetric \textit{quasi}-Banach function spaces. Thus we obtain Boyd's theorem and its extensions for the full
scale of $L^p$-spaces.\par
Interestingly, our method even yields interpolation results for certain operators defined on noncommutative $l^1$- and $l^2$-valued
$L^p$-spaces in the sense of Pisier \cite{Pis98}. In particular, it allows for the interpolation of noncommutative probabilistic inequalities such as the dual Doob
inequality, in the noncommutative setting due to Junge \cite{Jun02}, and the `upper' noncommutative Khintchine inequalities, originally due to Lust-Piquard
\cite{L-P86,LuP91}. In fact, our approach has its origins in the proof of the Khintchine inequalities for noncommutative symmetric spaces given in \cite{DPP11,DiR11},
which the author only later understood as Boyd-type interpolation results.\par
By adapting the duality argument in Junge's proof of the Doob maximal inequality for noncommutative $L^p$-spaces, we can dualize our noncommutative $l^1$-valued interpolation theorem to find an interpolation result for noncommutative maximal inequalities. In particular, we deduce a version of Doob's maximal inequality for a large class of noncommutative symmetric spaces. In the final section we utilize the latter inequality and its dual version to prove Burkholder-Davis-Gundy and Burkholder-Rosenthal inequalities, respectively, for noncommutative symmetric spaces. Our results
extend the Burkholder-Gundy and Rosenthal inequalities established in \cite{DPP11}, as well as the Burkholder-Rosenthal inequalities for noncommutative $L^p$-spaces and
Lorentz spaces obtained in \cite{JuX03} and \cite{Jia10}, respectively.\par During the writing of this manuscript we discovered that an interpolation result for
noncommutative $\Phi$-moment inequalities associated with Orlicz functions was proved recently in \cite{BeC10}. We discuss the connection of our work with this result
and in fact show that many of our interpolation results have a `$\Phi$-moment version'.\par The paper is organized so that the first part, up to the classical Boyd interpolation theorem, can be read without any knowledge of noncommutative analysis.

\section{Symmetric quasi-Banach function spaces}
\label{sec:QBFS}

In this preliminary section we introduce symmetric quasi-Banach function spaces and discuss their most important properties. The results presented below are all well known for Banach function spaces, but not easy to find for quasi-Banach function spaces. We shall need the following well-known result due to T.\ Aoki and S.\ Rolewicz, which states that every quasi-normed vector space can be equipped with an equivalent $r$-norm (see e.g.\ \cite{KPR84} for a proof).
\begin{theorem}
\label{thm:Aoki-RolewiczOrig}
(Aoki-Rolewicz) Let $X$ be a quasi-normed vector space. Then there is a $C>0$ and $0<r\leq 1$ such that for any $x_1,\ldots,x_n \in X$,
\begin{equation}
\label{eqn:Aoki-Rolewicz}
\Big\|\sum_{i=1}^n x_i\Big\|_X \leq C \Big(\sum_{i=1}^n \|x_i\|_X^r\Big)^{\frac{1}{r}}.
\end{equation}
\end{theorem}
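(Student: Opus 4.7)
Since $X$ is quasi-normed, fix $K\geq 1$ with $\|x+y\|_X\leq K(\|x\|_X+\|y\|_X)$. The first observation I would record is that iterating the quasi-triangle inequality along a balanced binary tree yields, for any $N\in\mathbb{N}$ and $z_1,\dots,z_{2^N}\in X$,
$$\Big\|\sum_{i=1}^{2^N}z_i\Big\|_X \leq K^N\sum_{i=1}^{2^N}\|z_i\|_X.$$
With this in hand, the plan is to calibrate $r\in(0,1]$ by $2^{1/r}=4K^2$; this depends only on $K$, and yields $r=1/\log_2(4K^2)\leq 1/2$.

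Next I would reduce by homogeneity to the normalized case $\sum_{i=1}^n\|x_i\|_X^r=1$ and reindex so that $\|x_1\|_X\geq\dots\geq\|x_n\|_X$. Partition $\{1,\dots,n\}$ into the dyadic level sets $G_k=\{i:2^{-(k+1)/r}<\|x_i\|_X\leq 2^{-k/r}\}$ for $k\geq 0$; the normalization forces $|G_k|\cdot 2^{-(k+1)}<1$, hence $|G_k|<2^{k+1}$. Writing $y_k=\sum_{i\in G_k}x_i$ and applying the balanced-tree estimate inside each $G_k$ (after padding with zeros) gives
$$\|y_k\|_X\leq K^{k+1}\sum_{i\in G_k}\|x_i\|_X \leq K^{k+1}\cdot 2^{k+1}\cdot 2^{-k/r} = 2K\cdot\bigl(2K\cdot 2^{-1/r}\bigr)^{k}.$$

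To finish I would combine the block sums by an unbalanced peeling: iterating $\|y_0+\dots+y_J\|_X\leq K\|y_0\|_X+K\|y_1+\dots+y_J\|_X$ gives $\|\sum_k y_k\|_X\leq\sum_k K^{k+1}\|y_k\|_X$. The calibration $2^{1/r}=4K^2$ forces $2K\cdot 2^{-1/r}=1/(2K)$, so
$$K^{k+1}\|y_k\|_X\leq K^{k+1}\cdot 2K\cdot(2K)^{-k} = 2K^2\cdot 2^{-k},$$
and summing the geometric series bounds $\|\sum_i x_i\|_X$ by $4K^2$, proving the claim with $C=4K^2$.

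The main obstacle I anticipate is the combining step. A balanced-tree application to the block sums would introduce an uncontrolled factor $K^{\log_2 J}$, where $J$ is the number of nonempty blocks. Avoiding this requires two ingredients working in tandem: (i) geometric decay of the block sums, which demands $r$ small enough that $2K\cdot 2^{-1/r}<K^{-1}$, and (ii) the unbalanced peeling scheme that trades the balanced-tree factor for the weighted series $\sum_k K^{k+1}\|y_k\|_X$. The calibration $2^{1/r}=4K^2$ is chosen so that both features close simultaneously; any strictly larger value of $2^{1/r}$ would also work at the cost of a worse constant $C$.
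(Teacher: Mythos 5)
Your argument is correct, and I checked the details: the balanced-tree bound $\|\sum_{i=1}^{2^N}z_i\|_X\leq K^N\sum_i\|z_i\|_X$ is a valid induction, the normalization $\sum_i\|x_i\|_X^r=1$ does force $|G_k|<2^{k+1}$, the padded tree estimate gives $\|y_k\|_X\leq 2K(2K\cdot 2^{-1/r})^k$, the unbalanced peeling gives $\|\sum_k y_k\|_X\leq\sum_k K^{k+1}\|y_k\|_X$, and with $2^{1/r}=4K^2$ the resulting series is $\sum_k 2K^2 2^{-k}=4K^2$, so the stated inequality holds with $C=4K^2$ and $r=1/\log_2(4K^2)\leq 1/2$. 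Note, however, that the paper does not prove this theorem at all; it simply cites \cite{KPR84}, so your proof is necessarily a different route from the paper's. Compared with the classical Aoki--Rolewicz argument found there, which establishes the stronger conclusion that $X$ admits an equivalent $r$-norm with the optimal exponent determined by $2^{1/r}=2K$ (via an inductive lemma on sums with dyadic coefficients), your dyadic level-set decomposition with the cruder calibration $2^{1/r}=4K^2$ yields a smaller (worse) $r$ and does not directly produce an equivalent $r$-norm, but it is shorter, completely elementary, and proves exactly the inequality (\ref{eqn:Aoki-Rolewicz}) in the form the paper actually uses (e.g.\ in Lemma~\ref{lem:dilation} and the discussion of Boyd indices), where only the existence of some $r$ and $C$ matters.
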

\noindent Let $\tilde{S}(\R_+)$ be the linear space of all measurable, a.e.\ finite functions $f$ on $\R_+$. For any $f \in \tilde{S}(\R_+)$ we define its
\emph{distribution function} by
$$d(v;f) = \lambda(t \in \R_+ \ : \ |f(t)|>v) \qquad (v\geq 0),$$
where $\lambda$ denotes Lebesgue measure. Let $S(\R_+)$ be the subspace of all $f \in \tilde{S}(\R_+)$ such that $d(v;f)<\infty$ for some $v>0$ and let $S_0(\R_+)$ be
the subspace of all $f \in S(\R_+)$ with $d(v;f)<\infty$ for all $v>0$. For $f \in S(\R_+)$ we denote by $\mu(f)$ the \emph{decreasing rearrangement} of $f$, defined by
\begin{equation*}
\label{eqn:decreasingRearrangement}
\mu_t(f) = \inf\{v>0 \ : \ d(v;f)\leq t\} \qquad (t\geq 0).
\end{equation*}
For $f,g \in S(\R_+)$ we say $f$ is \textit{submajorized} by $g$, and write $f\prec\prec g$, if
$$\int_0^t \mu_s(f) ds \leq \int_0^t \mu_s(g) ds, \qquad \mathrm{for \ all} \ t>0.$$
A (quasi-)normed linear subspace $E$ of $S(\R_+)$ is called a \emph{(quasi-)Banach function space} on $\R_+$ if it is complete and if for $f \in S(\R_+)$ and $g \in E$
with $|f|\leq |g|$ we have $f \in E$ and $\|f\|_E\leq \|g\|_E$. A (quasi-)Banach function space $E$ on $\R_+$ is called \textit{symmetric} if for $f \in S(\R_+)$ and $g
\in E$ with $\mu(f)\leq \mu(g)$ we have $f \in E$ and $\|f\|_E\leq\|g\|_E$. It is called \textit{fully symmetric} if, in addition, for $f \in S(\R_+)$ and $g \in E$ with
$f\prec\prec g$ it follows that $f \in E$ and $\|f\|_E\leq\|g\|_E$.\par
A symmetric (quasi-)Banach function space is said to have a \textit{Fatou (quasi-)norm} if for every net $(f_{\beta})$ in $E$ and $f \in E$ satisfying $0\leq
f_{\beta}\uparrow f$ we have $\|f_{\beta}\|_E\uparrow \|f\|_E$. The space $E$ is said to have the \textit{Fatou property} if for every net $(f_{\beta})$ in $E$
satisfying $0\leq f_{\beta}\uparrow$ and $\sup_{\beta}\|f_{\beta}\|_E<\infty$ the supremum $f=\sup_{\beta} f_{\beta}$ exists in $E$ and $\|f_{\beta}\|_E\uparrow
\|f\|_E$. We say that $E$ has \textit{order continuous} (quasi-)norm if for every net $(f_{\beta})$ in $E$ such that $f_{\beta}\downarrow 0$ we have
$\|f_{\beta}\|_E\downarrow 0$. In the literature, a symmetric (quasi-)Banach function space is often called \emph{rearrangement invariant} if it has order continuous
(quasi-)norm or the Fatou property. We shall not use this terminology.\par Let us finally discuss some results specific for symmetric Banach function spaces. The
\emph{K\"{o}the dual}\index{K\"{o}the dual} of a symmetric Banach function space $E$ is the Banach function space $E^{\ti}$ given by
\begin{align*}
& E^{\ti} = \Big\{g \in S(\R_+) \ : \ \sup\Big\{\int_0^{\infty} |f(t)g(t)| \ dt \ : \ \|f\|_E\leq 1\Big\}<\infty\Big\};\\
& \|g\|_{E^{\times}} = \sup\Big\{\int_0^{\infty} |f(t)g(t)| \ dt \ : \ \|f\|_E\leq 1\Big\}, \qquad g \in E^{\ti}.
\end{align*}
The space $E^{\ti}$ is fully symmetric and has the Fatou property. It is isometrically isomorphic to a closed subspace of $E^*$ via the map
\begin{equation*}
\label{eqn:KotheDualDef}
g \mapsto L_g, \qquad L_g(f) = \int_0^{\infty} f(t)g(t) \ dt \ \ \ (f \in E).
\end{equation*}
A symmetric Banach function space on $\R_+$ has a Fatou norm if and only if $E$ embeds isometrically into its second K\"{o}the dual $E^{\ti\ti}=(E^{\ti})^{\ti}$. It has
the Fatou property if and only if $E=E^{\ti\ti}$ isometrically. It has order continuous norm if and only if it is separable, which is also equivalent to the statement
$E^*=E^{\times}$. Moreover, a symmetric Banach function space which is separable or has the Fatou property is automatically fully symmetric. For proofs of these facts
and more details we refer to \cite{BeS88,KPS82,LiT79}.

\subsection{Boyd indices}
\label{sec:BoydIndicesNewSec}

We now discuss the \textit{Boyd indices}, which were introduced by D.W. Boyd in \cite{Boy69}. For any $0<a<\infty$ we define the dilation operator $D_a$ on $S(\R_+)$ by
$$(D_a f)(s) = f(as) \qquad (s \in \R_+).$$
The following lemma is well known for symmetric Banach function spaces (cf.\ \cite{KPS82}).
\begin{lemma}
\label{lem:dilation}
Let $E$ be a symmetric quasi-Banach function space on $\R_+$. Then, for every $0<a<\infty$, $D_a$ defines a bounded linear operator on $E$. Moreover, $a\mapsto\|D_a\|$ is a decreasing, submultiplicative function on $\R_+$.
\end{lemma}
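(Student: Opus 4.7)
The plan is to reduce everything to the rearrangement identity $\mu_t(D_a f)=\mu_{at}(f)$, which follows from the change-of-variables observation $d(v;D_a f)=a^{-1}d(v;f)$. Linearity of $D_a$ is immediate. For $a\ge 1$ boundedness is free: $\mu_{at}(f)\le\mu_t(f)$ because $\mu(f)$ is decreasing, so the symmetric property of $E$ yields $\|D_a f\|_E\le\|f\|_E$, i.e.\ $\|D_a\|\le 1$. For $0<a<1$ I would first reduce to $a=1/n$ for a positive integer $n$: with $n=\lceil 1/a\rceil$, the inequality $at\ge t/n$ and the rearrangement identity give $\mu(D_a f)\le\mu(D_{1/n}f)$, so $\|D_a f\|_E\le\|D_{1/n}f\|_E$ by symmetry.

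The heart of the argument is therefore the bound for $\|D_{1/n}\|$, and the natural idea is to realize $D_{1/n}f$ (up to rearrangement) as a sum of $n$ disjointly supported copies of $\mu(f)$, then invoke Aoki--Rolewicz. Concretely, partition $\R_+$ into $n$ disjoint measurable sets of infinite measure, e.g.\ $A_j=\bigcup_{k\ge 0}[kn+j,kn+j+1)$ for $j=0,\ldots,n-1$, pick measure-preserving bijections $\sigma_j\colon A_j\to\R_+$, and set $H_j=(\mu(f)\circ\sigma_j)\chi_{A_j}$ and $H^{\ast}=\sum_{j=0}^{n-1}H_j$. By measure-preservation, $d(v;H_j)=d(v;f)$ for every $j$, and the disjointness of the $A_j$ yields $d(v;H^{\ast})=n\,d(v;f)$, so that $\mu_t(H^{\ast})=\mu_{t/n}(f)=\mu_t(D_{1/n}f)$. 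Each $H_j$ has the same rearrangement as $\mu(f)$, hence lies in $E$ with $\|H_j\|_E=\|f\|_E$, so $H^{\ast}\in E$ as a finite sum; the symmetric property then gives $D_{1/n}f\in E$ with $\|D_{1/n}f\|_E\le\|H^{\ast}\|_E$. Applying Theorem~\ref{thm:Aoki-RolewiczOrig} with the $r$-norm parameter of $E$ (some $r\in(0,1]$) delivers
\[
\|D_{1/n}f\|_E\le C\Bigl(\sum_{j=0}^{n-1}\|H_j\|_E^{r}\Bigr)^{1/r}=Cn^{1/r}\|f\|_E.
\]

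The remaining assertions come almost for free. Monotonicity: for $0<a\le b$, $\mu_{at}(f)\ge\mu_{bt}(f)$, hence $\mu(D_b f)\le\mu(D_a f)$ and the symmetric property gives $\|D_b\|\le\|D_a\|$. Submultiplicativity: a direct substitution shows $D_a\circ D_b=D_{ab}$, from which taking operator norms yields $\|D_{ab}\|\le\|D_a\|\,\|D_b\|$. The main obstacle is the case $a=1/n$, where one must accommodate the possibility that $\mu(f)$ has infinite support, so that literal disjoint translated copies of $\mu(f)$ do not fit inside $\R_+$; this is precisely what the measure-preserving maps $\sigma_j$ circumvent, and Aoki--Rolewicz is essential here because $E$ is only assumed to be quasi-Banach.
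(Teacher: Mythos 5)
Your proposal is correct and follows essentially the same route as the paper: reduce to $D_{1/n}$, represent $D_{1/n}f$ up to equimeasurability as a sum of $n$ disjointly supported copies of (a rearrangement of) $f$, and apply Aoki--Rolewicz to get the bound $Cn^{1/r}\|f\|_E$, with monotonicity and $D_{ab}=D_aD_b$ giving the remaining assertions. The only difference is cosmetic: you make explicit, via measure-preserving bijections, the disjoint equidistributed copies that the paper simply posits.
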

\begin{proof}
Since $\mu(f)$ is decreasing, we have for any $a\leq b$,
$$D_b\mu(f)(s) = \mu_{bs}(f) \leq \mu_{as}(f) = D_a\mu(f)(s).$$
Hence, if $D_a$ is bounded on $E$, then $D_b$ is bounded on $E$ as well and $\|D_b\|\leq\|D_a\|$. In particular, $\|D_a\|$ is bounded on $E$ if $a\geq 1$ and $\|D_a\|\leq 1$. Moreover, it suffices to show that $D_{\frac{1}{n}}$ is bounded on $E$ for every $n \in \N$.\par
Fix $n \in \mathbb{N}$, let $f \in E_+$ and let $f_i$, $1\leq i\leq n$, be mutually disjoint functions having the same distribution function as $f$. Then $D_{\frac{1}{n}}f$ and $\sum_{i=1}^n f_i$ have the same distribution function. Indeed,
\begin{eqnarray}
\label{eqn:idenDistDil1}
\lambda(t \in \R_+ \ : \ (D_{\frac{1}{n}}f)(t)>v) & = & n\lambda(t \in \R_+ \ : \ f(t)>v) \nonumber \\
& = & \sum_{i=1}^n \lambda(t\in \R_+ \ : \ f_i(t)>v) \nonumber \\
& = & \lambda\Big(t \in \R_+ \ : \ \sum_{i=1}^n f_i(t)>v\Big).
\end{eqnarray}
Since $E$ is symmetric it follows that $D_{\frac{1}{n}}f \in E$. Moreover, by Theorem~\ref{thm:Aoki-RolewiczOrig}, there exists some $c>0$ and $0<p\leq 1$ such that
\begin{equation}
\label{eqn:dilEstpNorm}
\|D_{\frac{1}{n}}f\|_E = \Big\|\sum_{i=1}^n f_i \Big\|_E \leq c\Big(\sum_{i=1}^n \|f_i\|_E^p\Big)^{\frac{1}{p}} = c n^{\frac{1}{p}} \|f\|_E.
\end{equation}
From the above it is clear that $a\mapsto\|D_a\|$ is decreasing and, since $D_{ab}=D_a D_b$ if $a\leq b$, submultiplicative.
\epf
\end{proof}
\noindent Define the \textit{lower Boyd index}\index{Boyd indices} $p_E$ of $E$ by
$$p_E = \lim_{s\rightarrow\infty}\frac{\log s}{\log \|D_{\frac{1}{s}}\|}$$
and the \textit{upper Boyd index} $q_E$ of $E$ by
$$q_E = \lim_{s\downarrow 0}\frac{\log s}{\log \|D_{\frac{1}{s}}\|}.$$
By the Aoki-Rolewicz theorem $E$ admits an equivalent $p$-norm for some $0<p\leq 1$ and, as observed in (\ref{eqn:dilEstpNorm}), for every $n\geq 1$ we have
$$\|D_{\frac{1}{n}}f\|_E \leq n^{\frac{1}{p}} \|f\|_E$$
and therefore $p\leq p_E$. In particular we have $0<p_E\leq q_E\leq\infty$ and if $E$ is a symmetric Banach function space then $1\leq p_E\leq q_E\leq \infty$. One may show that the Boyd indices can alternatively be expressed as
\begin{equation}
\label{eqn:BoydIndexLowerDef}
p_E = \sup\Big\{p>0: \ \exists c>0 \ \forall 0<a\leq 1 \ \|D_a f\|_E\leq c a^{-\frac{1}{p}}\|f\|_E\Big\}
\end{equation}
and
$$q_E = \inf\Big\{q>0: \ \exists c>0 \ \forall a\geq 1 \ \|D_a f\|_E\leq c a^{-\frac{1}{q}}\|f\|_E\Big\}.$$
We shall need the following duality for Boyd indices (see \cite{KPS82}, Theorem II.4.11). If $E$ is a symmetric Banach function space with Fatou norm, then
\begin{equation}
\label{eqn:dualityBoydIndices}
\frac{1}{p_E} + \frac{1}{q_{E^{\ti}}} = 1, \qquad \frac{1}{p_{E^{\ti}}} + \frac{1}{q_E} = 1.
\end{equation}

\subsection{Convexity and concavity}
\label{sec:convexityConcavity}

Let $0<p,q\leq\infty$. A symmetric quasi-Banach function space $E$ is said to be $p$\textit{-convex}\index{quasi-Banach function space!$p$-convex} if there exists a constant $C>0$ such that for any finite sequence $(f_i)_{i=1}^n$ in $E$ we have
$$\Big\|\Big(\sum_{i=1}^n |f_i|^p\Big)^{\frac{1}{p}}\Big\|_E \leq C \Big(\sum_{i=1}^n\|f_i\|_E^{p}\Big)^{\frac{1}{p}} \qquad (\mathrm{if} \ 0<p<\infty),$$
or,
$$\Big\|\max_{1\leq i\leq n} |f_i| \ \Big\|_E \leq C \max_{1\leq i\leq n} \|f_i\|_E \qquad (\mathrm{if} \ p=\infty).$$
The least constant $M^{(p)}$ for which this inequality holds is called the $p$\textit{-convexity constant} of $E$.\\
A symmetric quasi-Banach function space $E$ is said to be $q$\textit{-concave}\index{quasi-Banach function space!$q$-concave} if there exists a constant $C>0$ such that for any finite sequence $(f_i)_{i=1}^n$ in $E$ we have
$$\Big(\sum_{i=1}^n\|f_i\|_E^{q}\Big)^{\frac{1}{q}} \leq C \Big\|\Big(\sum_{i=1}^n |f_i|^q\Big)^{\frac{1}{q}}\Big\|_E \qquad (\mathrm{if} \ 0<q<\infty),$$
or,
$$\max_{1\leq i\leq n} \|f_i\|_E \leq C \Big\|\max_{1\leq i\leq n} |f_i| \ \Big\|_E \qquad (\mathrm{if} \ q=\infty).$$
The least constant $M_{(q)}$ for which this inequality holds is called the $q$\textit{-concavity constant} of $E$. It is clear that every quasi-Banach function space is $\infty$-concave with $M_{(\infty)}=1$ and any Banach function space is $1$-convex with $M^{(1)}=1$.\par
For $1\leq r<\infty$, let the $r$\textit{-concavification}\index{quasi-Banach function space!concavification of} and $r$\textit{-convexification}\index{quasi-Banach function space!convexification of} of $E$ be defined by
\begin{eqnarray*}
\label{eqn:convexification}
E_{(r)} & := & \{g \in S(\R_+): \ |g|^{\frac{1}{r}} \in E\}, \ \|g\|_{E_{(r)}} = \| \ |g|^{\frac{1}{r}}\|_{E}^{r},\\
E^{(r)} & := & \{g \in S(\R_+): \ |g|^r \in E\}, \ \|g\|_{E^{(r)}} = \|\ |g|^r\|_{E}^{\frac{1}{r}},
\end{eqnarray*}
respectively. As is shown in \cite{LiT79} (p. 53), if $E$ is a Banach function space, then $E^{(r)}$ is a Banach function space. In general, $E_{(r)}$ is only a quasi-Banach function space. Using that $\mu(|f|^s)=\mu(f)^s$ for any $f \in S(\R_+)$ and $0<s<\infty$, one sees that $E^{(r)}$ and $E_{(r)}$ are symmetric if $E$ is symmetric. From the definitions one easily shows that if $E$ is $p$-convex and $q$-concave for $0<p\leq q\leq\infty$, then $E^{(r)}$ is $pr$-convex and $qr$-concave and $E_{(r)}$ is $\frac{p}{r}$-convex and $\frac{q}{r}$-concave. It is also clear from the definitions that
$$p_{E_{(r)}} = \frac{1}{r}p_E, \ q_{E_{(r)}}=\frac{1}{r}q_E, \ p_{E^{(r)}} = r p_E, \ q_{E^{(r)}}=r q_E.$$
We conclude this section by discussing two concrete classes of symmetric quasi-Banach function spaces in more detail.
\begin{example}
\label{exa:Lorentzpq} \textnormal{(Lorentz spaces $L^{p,q}$) Let $0<p,q\leq\infty$. The \textit{Lorentz space} $L^{p,q}$ is the subspace of all $f$ in $S(\R_+)$ such
that
\begin{align*}
\|f\|_{L^{p,q}} = \left\{\begin{array}{rl}(\int_0^{\infty}t^{\frac{q}{p}-1}\mu_t(f)^q \ dt)^{\frac{1}{q}} \qquad & (0<q<\infty),\\
\sup_{0<t<\infty} t^{\frac{1}{p}}\mu_t(f) \qquad & (q=\infty),
\end{array}
\right.
\end{align*}
is finite. If $1\leq q\leq p<\infty$ or $p=q=\infty$, then $L^{p,q}$ is a fully symmetric Banach function space. If $1<p<\infty$ and $p\leq q\leq \infty$ then $L^{p,q}$ can be equivalently renormed to become a fully symmetric Banach function space (\cite{BeS88}, Theorem 4.6). However, in general $L^{p,q}$ is only a symmetric quasi-Banach function space \cite{Kal84}. By the monotone convergence theorem, $L^{p,q}$ has the Fatou property. Its Boyd indices are determined by the first exponent, $p_{L^{p,q}} =
q_{L^{p,q}} = p$. The Lorentz space $L^{p,p}$ coincides with the Lebesgue space $L^p$. The spaces $L^{p,\infty}$ are referred to as \textit{weak $L^p$-spaces}.}
\end{example}
\begin{example}
\label{exa:Orlicz} \textnormal{(Orlicz spaces)\index{Orlicz space} Let $\Phi:[0,\infty)\rightarrow [0,\infty]$ be a Young\textquoteright s function, i.e., a convex,
continuous and increasing function satisfying $\Phi(0)=0$ and $\lim_{t\rightarrow \infty}\Phi(t) = \infty$. The \emph{Orlicz space} $L_{\Phi}$ is the subspace of all $f$
in $S(\R_+)$ such that for some $k>0$,
$$\int_0^{\infty} \Phi\Big(\frac{|f(t)|}{k}\Big) dt<\infty.$$
If we equip $L_{\Phi}$ with the Luxemburg norm
$$\|f\|_{L_{\Phi}} = \inf\Big\{k>0 \ : \ \int_0^{\infty} \Phi\Big(\frac{|f(t)|}{k}\Big) dt\leq 1\Big\},$$
then $L_{\Phi}$ is a symmetric Banach function space with the Fatou property \cite{BeS88,LiT79}.
The Boyd indices of $L_{\Phi}$ can be computed in terms of $\Phi$.
Indeed, let
$$M_{\Phi}(t)=\sup_{s>0}\frac {\Phi(ts)}{\Phi(s)},$$
and define the Matuszewska-Orlicz indices by
$$p_\Phi=\lim_{t\downarrow 0} \frac {\log M_{\Phi}(t)}{\log t},\qquad
q_\Phi=\lim_{t\to \infty} \frac {\log M_{\Phi}(t)}{\log t}.$$ One can show that $p_{\Phi}=p_{L_{\Phi}}$ and $q_{\Phi}=q_{L_{\Phi}}$, see e.g.\ the proof of \cite{Mal85}, Theorem 4.2. For our discussion of $\Phi$-moment inequalities we will need the following results on Orlicz functions. We say that an Orlicz function satisfies the \textit{global
$\Delta_2$-condition} if for some constant $C>0$,
\begin{equation}
\label{eqn:D2global}
\Phi(2t) \leq C \Phi(t) \qquad (t\geq 0).
\end{equation}
Under this condition we have, for any $\al\geq 0$,
$$\Phi(\al t)\lesssim_{\al,\Phi} \Phi(t) \qquad (t\geq 0).$$
One can show (\cite{Mal85}, Theorem 3.2(b)) that (\ref{eqn:D2global}) is equivalent to the assumption $q_{\Phi}<\infty$, which in turn holds if and only if
\begin{equation}
\label{eqn:derivOrlicz} \sup_{t>0} \frac{t\Phi'(t)}{\Phi(t)} < \infty.
\end{equation}
Finally, we shall use the following characterization of Boyd's indices for Orlicz spaces (\cite{Mal85}, Theorem 6.4):}
\begin{equation}
\label{eqn:charOrlInd}
\begin{split}
p_{\Phi} & = \sup\Big\{p>0 \ : \ \int_0^t s^{-p}\Phi(s)\frac{ds}{s} = O(t^{-p}\Phi(t)) \ \forall t>0\Big\}, \\
q_{\Phi} & = \inf\Big\{q>0 \ : \ \int_t^{\infty} s^{-q}\Phi(s)\frac{ds}{s} = O(t^{-q}\Phi(t)) \ \forall t>0\Big\}.
\end{split}
\end{equation}
\end{example}
We refer to \cite{BeS88,KPS82,LiT79} for many more concrete examples of symmetric quasi-Banach function spaces.

\section{Characterization of Marcinkiewicz weak type operators}

In this section we establish a key observation, which essentially reduces the proof of Boyd's theorem and its noncommutative extensions to proving a
certain inequality for distribution functions, which is stated in Lemma~\ref{lem:keyDI} below. This observation moreover leads to a characterization of the subconvex
operators which are simultaneously of weak types $(p,p)$ and $(q,q)$, see Theorem~\ref{thm:WTChar}.\par For $0<p,q\leq\infty$ we define the functions
$\phi_q,\psi_p,\theta_{p,q}:\R_+\rightarrow\R_+$ by
\begin{align*}
\phi_q(t) & = t^{-\frac{1}{q}}\chi_{(0,1)}(t) \qquad (t>0);\\
\psi_p(t) & = t^{-\frac{1}{p}}\chi_{[1,\infty)}(t) \qquad (t>0);\\
\theta_{p,q}(t) & = \psi_p(t) + \phi_q(t) \qquad (t>0).
\end{align*}
Here it is understood that $\phi_{\infty}=\chi_{(0,1)}$. Corresponding to these functions we define three linear operators $\Phi_q,\Psi_p,\Theta_{p,q}:S(\R_+)\rightarrow
\tilde{S}(\R_+\ti\R_+)$ by
$$\Phi_q(f) = f\ot \phi_q, \ \ \Psi_p(f) = f\ot \psi_p, \ \ \Theta_{p,q}(f) = f\ot\theta_{p,q}.$$
The following observation is a
reformulation of \cite{DPP11}, Lemma 4.3.
\begin{lemma}
\label{lem:distEst}
Let $E$ be a symmetric quasi-Banach function space on $\R_+$ and let $0<q<\infty$. If $q_E<q$, then $\Phi_q$ is bounded from $E(\R_+)$ into $E(\R_+\ti\R_+)$. Conversely, if $\Phi_q$ is bounded then $q_E\leq q$.
\end{lemma}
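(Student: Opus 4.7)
My plan is twofold, with a single observation driving both directions. The key computation is that for any Borel set $I \subset \R_+$ of finite measure, the tensor product $f \otimes \chi_I$ on $\R_+ \times \R_+$ has distribution function
$$d(v; f \otimes \chi_I) = |I| \cdot d(v; f),$$
so its decreasing rearrangement equals the dilation $D_{1/|I|}\mu(f)$. Specializing to the dyadic intervals $I_k := (2^{-k-1}, 2^{-k}]$ of length $2^{-k-1}$, this yields
$$\|f \otimes \chi_{I_k}\|_{E(\R_+ \times \R_+)} = \|D_{2^{k+1}}\mu(f)\|_E \leq \|D_{2^{k+1}}\|\,\|f\|_E,$$
so all relevant two-variable norms can be controlled through the Boyd dilation norms.

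For the forward direction, on each $I_k$ one has $t^{-1/q} \leq 2^{(k+1)/q}$, hence $\phi_q \leq 2^{1/q}\sum_{k \geq 0}2^{k/q}\chi_{I_k}$ and therefore pointwise $\Phi_q(f) \leq 2^{1/q}\sum_{k \geq 0}2^{k/q}\,f \otimes \chi_{I_k}$. By the Aoki--Rolewicz theorem (Theorem~\ref{thm:Aoki-RolewiczOrig}) I fix $0 < r \leq 1$ with the associated $r$-estimate for $E$; raising both sides to the $r$-th power and using the observation above gives
$$\|\Phi_q(f)\|_E^r \lesssim \sum_{k \geq 0}2^{kr/q}\|D_{2^{k+1}}\|^r\,\|f\|_E^r.$$
Since $q_E < q$, pick $q' \in (q_E, q)$; by the characterization of the upper Boyd index there is $c > 0$ with $\|D_a\| \leq c a^{-1/q'}$ for $a \geq 1$, so $2^{kr/q}\|D_{2^{k+1}}\|^r \lesssim 2^{kr(1/q - 1/q')}$ is summable because $1/q - 1/q' < 0$, and $\Phi_q$ is bounded.

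For the converse I use the reverse pointwise bound $t^{-1/q} \geq 2^{k/q}$ on $I_k$, which gives $\Phi_q(f)(s,t) \geq 2^{k/q} f(s)\chi_{I_k}(t)$ for every fixed $k \geq 0$. Since pointwise inequalities pass to decreasing rearrangements,
$$\mu(\Phi_q(f)) \geq 2^{k/q}\,D_{2^{k+1}}\mu(f) \qquad (k \geq 0).$$
Taking $E$-norms, using the symmetry of $E$ and the assumed boundedness of $\Phi_q$, I get $2^{k/q}\|D_{2^{k+1}}f\|_E \leq \|\Phi_q\|\,\|f\|_E$ for every $f \in E$, so $\|D_{2^{k+1}}\| \leq \|\Phi_q\|\,2^{-k/q}$. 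Combined with the monotonicity of $a \mapsto \|D_a\|$ from Lemma~\ref{lem:dilation}, this forces $\|D_a\| \lesssim a^{-1/q}$ for all $a \geq 1$, and hence $q_E \leq q$.

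The conceptual hinge is the identity $\mu(f \otimes \chi_{I_k}) = D_{2^{k+1}}\mu(f)$, which turns both the upper and the lower dyadic sandwich of $\phi_q$ into a geometric series in the Boyd dilation norms. I do not expect a real technical obstacle: the Aoki--Rolewicz step dispatches the quasi-Banach triangle-inequality issue in one stroke, and the remainder is bookkeeping of the dyadic sum, whose convergence is exactly what the strict inequality $q_E < q$ guarantees.
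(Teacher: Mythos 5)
Your argument is correct and is essentially the paper's own method: the paper does not prove Lemma~\ref{lem:distEst} directly (it cites \cite{DPP11}, Lemma 4.3), but its proof of the companion Lemma~\ref{lem:distEstPsi} for $\Psi_p$ uses exactly your dyadic decomposition, the identification of $\mu(f\otimes\chi_I)$ with a dilation of $\mu(f)$, the Aoki--Rolewicz $r$-estimate, and the inf/sup characterization of the Boyd index, in both directions. Your proof is the straightforward transcription of that argument to $\Phi_q$, so no new ideas or gaps are involved.
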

Clearly $\Phi_{\infty}$ is an isometry from $E(\R_+)$ into $E(\R_+\ti\R_+)$ for every symmetric quasi-Banach function space $E$.\par The corresponding result for the
lower Boyd index reads as follows. In the proof and later on, we use $\chi_A$ to denote the indicator of a set $A$.
\begin{lemma}
\label{lem:distEstPsi}
Let $E$ be a symmetric quasi-Banach function space on $\R_+$ and let $0<p<\infty$. If $p<p_E$, then $\Psi_p$ is bounded from $E(\R_+)$ into $E(\R_+\ti\R_+)$. Conversely, if $\Psi_p$ is bounded then $p\leq p_E$.
\end{lemma}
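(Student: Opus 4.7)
The strategy mirrors the proof of Lemma~\ref{lem:distEst} for $\Phi_q$, replacing the dyadic decomposition of $\phi_q$ on $(0,1)$ by one of $\psi_p$ on $[1,\infty)$. The key building block is the identity, which is a direct consequence of the distributional computation in (\ref{eqn:idenDistDil1}), that for any measurable $A\subseteq\R_+$ of finite measure and any $f\in S(\R_+)$ the function $(s,t)\mapsto |f(s)|\chi_A(t)$ on $\R_+\ti\R_+$ has the same decreasing rearrangement as $D_{1/\lambda(A)}f$, and hence
$$\big\|(s,t)\mapsto |f(s)|\chi_A(t)\big\|_{E(\R_+\ti\R_+)} = \|D_{1/\lambda(A)}f\|_E.$$

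For the forward direction, I would fix $p'\in (p,p_E)$; by the characterization (\ref{eqn:BoydIndexLowerDef}) of the lower Boyd index there is $c>0$ with $\|D_a f\|_E\leq c a^{-1/p'}\|f\|_E$ for all $0<a\leq 1$, so in particular $\|D_{2^{-n}}f\|_E\leq c\,2^{n/p'}\|f\|_E$. The pointwise bound $t^{-1/p}\leq 2^{-n/p}$ on $[2^n,2^{n+1})$ yields
$$|\Psi_p f(s,t)|\leq \sum_{n=0}^{\infty} 2^{-n/p}|f(s)|\chi_{[2^n,2^{n+1})}(t),$$
and applying the Aoki--Rolewicz inequality (Theorem~\ref{thm:Aoki-RolewiczOrig}) to the partial sums of this series in the quasi-Banach space $E(\R_+\ti\R_+)$, together with the building-block identity, produces
$$\|\Psi_p f\|_{E(\R_+\ti\R_+)}\leq C\Big(\sum_{n=0}^{\infty} 2^{-nr/p}\|D_{2^{-n}}f\|_E^r\Big)^{1/r}\leq Cc\,\|f\|_E\Big(\sum_{n=0}^{\infty} 2^{nr(1/p'-1/p)}\Big)^{1/r},$$
the geometric series converging because $p<p'$.

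For the converse, the matching lower bound $t^{-1/p}\geq 2^{-(k+1)/p}$ on $[2^k,2^{k+1})$ gives the pointwise estimate $|\Psi_p f(s,t)|\geq 2^{-(k+1)/p}|f(s)|\chi_{[2^k,2^{k+1})}(t)$. Taking $E(\R_+\ti\R_+)$-quasi-norms on both sides and invoking the building-block identity again yields $\|D_{2^{-k}}f\|_E\leq 2^{(k+1)/p}\|\Psi_p\|\cdot\|f\|_E$ for every $k\geq 0$; passing from dyadic $s=2^k$ to arbitrary $s>0$ via the monotonicity of $a\mapsto\|D_a\|$ from Lemma~\ref{lem:dilation} and substituting into the definition of $p_E$ gives $p_E\geq p$. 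The main technical obstacle is justifying the use of Aoki--Rolewicz for a countable sum in the forward direction; this is settled by observing that the displayed geometric series makes the partial sums Cauchy in the equivalent $r$-quasi-norm on $E(\R_+\ti\R_+)$, so they converge in quasi-norm (hence in measure) to the pointwise series, and the uniform bound passes to the limit.
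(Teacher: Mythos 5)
Your proof is correct and follows essentially the same route as the paper's: the dyadic decomposition of $\psi_p$ on $[1,\infty)$, the identification of $f\otimes\chi_{[2^n,2^{n+1})}$ with $D_{2^{-n}}f$ in distribution, Aoki--Rolewicz plus the bound $\|D_a\|\lesssim a^{-1/p'}$ for the forward direction, and testing $\Psi_p f$ against a single block for the converse. The only differences are cosmetic: the paper tests on an arbitrary interval $[a,2a)$ rather than dyadic blocks plus monotonicity of $a\mapsto\|D_a\|$, and you spell out the passage from finite to countable sums in the Aoki--Rolewicz step, which the paper leaves implicit.
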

\begin{proof}
Fix $p<p_0<p_E$. It clearly suffices to prove
\begin{equation}
\label{eqn:distEstPsi}
\|f\ot\psi_p\|_{E(\R_+\ti \R_+)}\leq c_{p,E} \|f\|_{E(\R_+)},
\end{equation}
for any $f \in E_+$. Observe that $f\ot\chi_{[2^n,2^{n+1})}$ has the same distribution on $\R_+\ti\R_+$ as $D_{2^{-n}}f$ on $\R_+$. Hence,
\begin{eqnarray*}
\|f\ot\psi_p\|_{E(\R_+\ti\R_+)} & \leq & \Big\|f(s)\sum_{n=0}^{\infty} 2^{-\frac{n}{p}}\chi_{[2^{n},2^{n+1})}(t)\Big\|_{E(\R_+\ti\R_+)} \\
& \leq & C\Big(\sum_{n=0}^{\infty} 2^{-\frac{nr}{p}}\|f(s)\chi_{[2^n,2^{n+1})}(t)\|_{E(\R_+\ti\R_+)}^r\Big)^{\frac{1}{r}} \\
& = & C\Big(\sum_{n=0}^{\infty} 2^{-\frac{nr}{p}}\|D_{2^{-n}}f\|_{E(\R_+)}^r\Big)^{\frac{1}{r}},
\end{eqnarray*}
where $C$ and $0<r\leq 1$ are as in (\ref{eqn:Aoki-Rolewicz}). By (\ref{eqn:BoydIndexLowerDef}), there is some constant $C_{p_0}>0$ such that
$$\|D_u\|\leq C_{p_0}u^{-\frac{1}{p_0}} \qquad (0<u\leq 1).$$
Hence,
\begin{eqnarray*}
\|f\ot\psi_p\|_{E(\R_+\ti\R_+)} & \leq & cC_{p_0} \Big(\sum_{n=0}^{\infty} 2^{-\frac{nr}{p}}2^{\frac{nr}{p_0}}\|f\|_{E(\R_+)}^r\Big)^{\frac{1}{r}} \\
& \lesssim_{p,E} & \|f\|_{E(\R_+)},
\end{eqnarray*}
as $\frac{1}{p_0}-\frac{1}{p}<0$.\par
For the second assertion, notice first that $\mu(D_s(f))=D_s\mu(f)$ for all $0<s<\infty$ and $f \in E$. Therefore, it suffices to show that there is a constant $c>0$ such that for all $0<s\leq 1$ and $f \in E_+$ we have $\|D_s f\|_E \leq c s^{-\frac{1}{p}} \|f\|_E$. If $1\leq a<\infty$, then
\begin{eqnarray*}
\|f\ot\psi_p\|_{E(\R_+\ti\R_+)} & \geq & \|f(s)t^{-\frac{1}{p}}\chi_{[a,2a)}(t)\|_{E(\R_+\ti\R_+)} \\
& \geq & \|f(s)(2a)^{-\frac{1}{p}}\chi_{[a,2a)}(t)\|_{E(\R_+\ti\R_+)} \\
& = & 2^{-\frac{1}{p}} a^{-\frac{1}{p}} \|D_{a^{-1}}f\|_{E(\R_+)},
\end{eqnarray*}
where we use that $f\ot\chi_{[a,2a)}$ has the same distribution on $\R_+\ti\R_+$ as $D_{a^{-1}}f$ on $\R_+$. By (\ref{eqn:distEstPsi}) we arrive at
$$\|D_{a^{-1}}f\|_E \leq 2^{\frac{1}{p}}a^{\frac{1}{p}} \|f\ot\psi_p\|_{E(\R_+\ti\R_+)} \lesssim_{p,E} a^{\frac{1}{p}}\|f\|_E.$$
Since this holds for any $1\leq a<\infty$, we conclude that $p\leq p_E$.
\epf
\end{proof}
As a result of Lemmas~\ref{lem:distEst} and \ref{lem:distEstPsi} we find the following novel expressions for Boyd's indices:
\begin{align*}
p_E & = \sup\Big\{p>0 \ : \ \exists C>0 \ \forall f \in E \ \|\Psi_p(f)\|_{E(\R_+\ti\R_+)}\leq C \|f\|_{E(\R_+)}\Big\} \\
q_E & = \inf\Big\{q>0 \ : \ \exists C>0 \ \forall f \in E \ \|\Phi_q(f)\|_{E(\R_+\ti\R_+)}\leq C \|f\|_{E(\R_+)}\Big\}.
\end{align*}
Moreover, we have the following result.
\begin{corollary}
\label{cor:distEstTheta}
Let $0<p<q<\infty$ and let $E$ be a symmetric quasi-Banach function space on $\R_+$. If $p<p_E\leq q_E<q$, then $\Theta_{p,q}$ is bounded from $E(\R_+)$ into $E(\R_+\ti\R_+)$. Conversely, if $\Theta_{p,q}$ is bounded, then $p\leq p_E\leq q_E\leq q$.
\end{corollary}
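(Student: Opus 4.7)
The statement follows almost directly from Lemmas~\ref{lem:distEst} and \ref{lem:distEstPsi} combined with the pointwise identity $\theta_{p,q}=\psi_p+\phi_q$ (whose two summands have disjoint supports $[1,\infty)$ and $(0,1)$ respectively).

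For the \emph{forward direction}, assume $p<p_E\leq q_E<q$. Lemma~\ref{lem:distEstPsi} gives a constant $c_1$ with $\|\Psi_p(f)\|_{E(\R_+\times\R_+)}\leq c_1\|f\|_{E(\R_+)}$, and Lemma~\ref{lem:distEst} gives a constant $c_2$ with $\|\Phi_q(f)\|_{E(\R_+\times\R_+)}\leq c_2\|f\|_{E(\R_+)}$. Since $\Theta_{p,q}(f)=\Psi_p(f)+\Phi_q(f)$, the Aoki-Rolewicz theorem (Theorem~\ref{thm:Aoki-RolewiczOrig}) yields constants $C>0$ and $0<r\leq 1$ such that
\begin{equation*}
\|\Theta_{p,q}(f)\|_{E(\R_+\times\R_+)}\leq C\bigl(\|\Psi_p(f)\|_{E(\R_+\times\R_+)}^r+\|\Phi_q(f)\|_{E(\R_+\times\R_+)}^r\bigr)^{1/r}\leq C(c_1^r+c_2^r)^{1/r}\|f\|_{E(\R_+)},
\end{equation*}
proving that $\Theta_{p,q}$ is bounded from $E(\R_+)$ into $E(\R_+\times\R_+)$.

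For the \emph{converse}, assume $\Theta_{p,q}$ is bounded. The key observation is that $0\leq\psi_p\leq\theta_{p,q}$ and $0\leq\phi_q\leq\theta_{p,q}$ pointwise, so for every $f\in E$ we have $|f\otimes\psi_p|\leq|f\otimes\theta_{p,q}|$ and $|f\otimes\phi_q|\leq|f\otimes\theta_{p,q}|$. The ideal property of $E(\R_+\times\R_+)$ then gives
\begin{equation*}
\|\Psi_p(f)\|_{E(\R_+\times\R_+)}\leq\|\Theta_{p,q}(f)\|_{E(\R_+\times\R_+)},\qquad \|\Phi_q(f)\|_{E(\R_+\times\R_+)}\leq\|\Theta_{p,q}(f)\|_{E(\R_+\times\R_+)},
\end{equation*}
so both $\Psi_p$ and $\Phi_q$ are bounded. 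Applying the converse parts of Lemmas~\ref{lem:distEstPsi} and \ref{lem:distEst} yields $p\leq p_E$ and $q_E\leq q$, and combining with the always-valid inequality $p_E\leq q_E$ gives the chain $p\leq p_E\leq q_E\leq q$.

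There is no genuine obstacle here; the only point requiring a moment of care is ensuring the quasi-triangle inequality in the forward direction is handled correctly for a quasi-Banach function space, which is exactly what the $r$-norm from Aoki-Rolewicz is for. Everything else is a direct invocation of the two preceding lemmas and the ideal property.
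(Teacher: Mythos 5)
Your proof is correct and is essentially the argument the paper intends: the forward direction combines Lemmas~\ref{lem:distEstPsi} and \ref{lem:distEst} via the quasi-triangle inequality (Aoki--Rolewicz), and the converse uses the pointwise domination $\psi_p,\phi_q\leq\theta_{p,q}$ together with the ideal property to reduce to the converse parts of those same lemmas. No gaps; nothing further needed.
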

The bound for the operator norm of $\Theta_{p,q}$ given in the proof of Corollary~\ref{cor:distEstTheta} can be improved in specific situations. For example, if $0<p<r<q\leq\infty$, then one easily calculates that
$$\|\Theta_{p,q}\|_{L^r\rightarrow L^r} = \Big(\frac{p}{r-p} + \frac{q}{q-r}\Big)^{\frac{1}{r}}.$$
We now compute the distribution function of $\Phi_q(f)$, $\Psi_p(f)$ and $\Theta_{p,q}(f)$. The first was already done in \cite{DPP11}, Lemma 4.4.
\begin{lemma}
\label{lem:dist}
Let $0<q<\infty$. If $f\in S(\R_+)$, then for every $v>0$,
$$d(v;\Phi_q(f)) = \int_{\{f\leq v\}} \Big(\frac{f(s)}{v}\Big)^q ds + d(v;f)$$
and
$$d(v;\Phi_{\infty}(f)) = d(v;f).$$
\end{lemma}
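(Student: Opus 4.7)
The plan is a direct computation using Fubini's theorem on $\R_+ \times \R_+$. Since $\Phi_q(f)(s,t) = f(s)\phi_q(t) = f(s) t^{-1/q}\chi_{(0,1)}(t)$, the two-dimensional Lebesgue measure of the set $\{(s,t) : |f(s)\phi_q(t)| > v\}$ factors naturally over slices: for each fixed $s$, I would evaluate the one-dimensional measure of $\{t\in\R_+ : |f(s)|t^{-1/q}\chi_{(0,1)}(t) > v\}$, then integrate over $s$.

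For the case $0<q<\infty$, the slice analysis splits at the level $v$. If $|f(s)|\leq v$, then the inequality $|f(s)|t^{-1/q} > v$ forces $t < (|f(s)|/v)^q \leq 1$, so the $t$-slice has measure exactly $(|f(s)|/v)^q$. If $|f(s)| > v$, then for every $t \in (0,1)$ we have $|f(s)|t^{-1/q} > |f(s)| > v$, so the $t$-slice has measure $1$. Integrating over $s$ and recognizing $\int_{\{|f|>v\}} 1\,ds = d(v;f)$ yields
\begin{equation*}
d(v;\Phi_q(f)) = \int_{\{|f|\leq v\}}\Big(\frac{|f(s)|}{v}\Big)^q ds + d(v;f),
\end{equation*}
which matches the stated formula (with the understanding that $f(s)$ may be read as $|f(s)|$, since $\Phi_q(f)$ and $\Phi_q(|f|)$ are equidistributed).

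For $q=\infty$, the formula is even simpler: $\Phi_\infty(f)(s,t) = f(s)\chi_{(0,1)}(t)$, and $|f(s)\chi_{(0,1)}(t)| > v$ requires both $t \in (0,1)$ and $|f(s)| > v$. By Fubini the two-dimensional measure is $\lambda((0,1)) \cdot d(v;f) = d(v;f)$.

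There is no real obstacle here; the only thing to be careful about is the measurability justification for Fubini and the correct treatment of the boundary case $|f(s)| = v$, which contributes zero either way. The argument is essentially a one-line Fubini calculation once the two regimes $|f(s)|\leq v$ and $|f(s)|>v$ are separated.
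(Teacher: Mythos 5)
Your computation is correct: slicing the product set $\{(s,t):|f(s)|\phi_q(t)>v\}$ over fixed $s$, the $t$-slice has measure $\min\{(|f(s)|/v)^q,1\}$, and integrating in $s$ gives exactly the stated identity; the $q=\infty$ case is immediate. Note that the paper does not prove this lemma itself but refers to \cite{DPP11}, Lemma 4.4; the closest in-paper comparison is the proof of Lemma~\ref{lem:distPsi}, which runs the Fubini argument in the opposite order: there one fixes $t$, integrates $\lambda\{s: f(s)t^{-1/p}>v\}$ over $t\in[1,\infty)$, performs the change of variable $u=t^{1/p}$, and recognizes the resulting expression as $\|\min\{v^{-1}f,1\}\|_{L^p}^p$ via the layer-cake formula. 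Your $s$-slicing is arguably the more direct route for $\Phi_q$, since the dichotomy $|f(s)|\leq v$ versus $|f(s)|>v$ falls out of the slice computation with no change of variables or norm identification needed; the paper's ordering pays off mainly in the $\Psi_p$ case, where the slice in $t$ is unbounded and the distribution-function formulation handles the subtraction of the $t<1$ part cleanly. Your remarks on measurability and on reading $f$ as $|f|$ (the boundary $|f(s)|=v$ being harmless) are the right caveats, so there is nothing to repair.
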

\begin{lemma}
\label{lem:distPsi} Let $0<p<\infty$ and $f \in S(\R_+)$. If $d(v;f)<\infty$, then
$$d(v;\Psi_p(f)) = \int_{\{f>v\}}\Big(\frac{f(s)}{v}\Big)^p ds - d(v;f).$$
\end{lemma}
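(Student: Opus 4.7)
The plan is to compute $d(v;\Psi_p(f))$ directly from the definition by means of Fubini's theorem, in complete analogy with the argument already used for $\Phi_q$ in Lemma~\ref{lem:dist}.

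Fix $v>0$. Unpacking the definition of $\Psi_p$, we need the two-dimensional Lebesgue measure of
$$A := \{(s,t) \in \R_+\ti\R_+ \ : \ |f(s)|\, t^{-\frac{1}{p}}\chi_{[1,\infty)}(t) > v\}.$$
Since the expression on the left vanishes whenever $t<1$, the set $A$ is contained in $\R_+\ti[1,\infty)$. For $t\geq 1$, the inequality $|f(s)|\, t^{-\frac{1}{p}}>v$ is equivalent to $t<(|f(s)|/v)^p$. Hence, for each fixed $s$, the $t$-section $A_s$ equals $[1,(|f(s)|/v)^p)$ when $|f(s)|>v$ and is empty otherwise; its one-dimensional Lebesgue measure is therefore $((|f(s)|/v)^p - 1)\chi_{\{|f|>v\}}(s)$.

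Applying Tonelli's theorem to the nonnegative function $\chi_{A}$ yields
$$d(v;\Psi_p(f)) = \int_{\R_+}\lambda(A_s)\,ds = \int_{\{|f|>v\}}\Big(\Big(\frac{|f(s)|}{v}\Big)^p - 1\Big)\,ds.$$
Splitting the right-hand side into two pieces gives the claimed identity, once we note that $\int_{\{|f|>v\}}ds = d(v;f)$ is finite by hypothesis.

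I do not foresee a genuine obstacle here; the role of the assumption $d(v;f)<\infty$ is precisely to make the subtraction on the right-hand side well-defined, so that the equality persists as an identity of extended real numbers even when the first term $\int_{\{|f|>v\}}(|f(s)|/v)^p\,ds$ happens to be infinite (in which case, by Tonelli, the left-hand side is infinite as well).
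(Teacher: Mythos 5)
Your computation is correct. You take the same basic route as the paper --- computing the planar measure of the super-level set of $f\ot\psi_p$ by Fubini/Tonelli --- but in the opposite order of integration: you fix $s$ and measure the $t$-section, which is the explicit interval $[1,(|f(s)|/v)^p)$ when $|f(s)|>v$ and empty otherwise, so the identity $d(v;\Psi_p(f))=\int_{\{|f|>v\}}\big((|f(s)|/v)^p-1\big)\,ds$ drops out at once. The paper instead fixes $t$ and measures the $s$-section, then performs the change of variable $u=t^{\frac{1}{p}}$ and rewrites $\int_1^\infty$ as $\int_0^\infty-\int_0^1$, identifying the two pieces with $\|v^{-1}f\|_{L^p(\R_+)}^p$ and $\|\min\{v^{-1}f,1\}\|_{L^p(\R_+)}^p$ via the layer-cake formula. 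Your order of slicing is slightly more economical: it never introduces the quantity $\|v^{-1}f\|_{L^p}^p$, which need not be finite for $f\in S(\R_+)$ with $d(v;f)<\infty$, so you avoid the formally delicate subtraction of possibly infinite quantities that the paper's intermediate step entails; the only subtraction you perform is of $d(v;f)$, which is finite by hypothesis --- and you correctly identify this as the exact role of that hypothesis, including the extended-real reading when $\int_{\{f>v\}}(f/v)^p\,ds=\infty$.
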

\begin{proof}
Using a change of variable,
\begin{align*}
& \lambda\Big( (s,t) \in \R_+\ti\R_+ \ : \ f(s)\psi_p(t)>v\Big) \\
& \qquad \qquad = \int_{1}^{\infty}\lambda\Big(s \in \R_+ \ : \ f(s)t^{-\frac{1}{p}}>v\Big) dt \\
& \qquad \qquad = \int_{1}^{\infty}\lambda\Big(s \in \R_+ \ : \ \frac{f(s)}{v}>t^{\frac{1}{p}}\Big) dt \\
& \qquad \qquad = \int_{1}^{\infty}\lambda\Big(s \in \R_+ \ : \ \frac{f(s)}{v}>u\Big) pu^{p-1}du \\
& \qquad \qquad = \Big\|\frac{f}{v}\Big\|_{L^p(\R_+)}^p - \int_{0}^{1}\lambda\Big(s \in \R_+ \ : \ \frac{f(s)}{v}>u\Big) pu^{p-1}du.
\end{align*}
Observe that
\begin{align*}
& \int_{0}^{1}\lambda\Big(s \in \R_+ \ : \ \frac{f(s)}{v}>u\Big) pu^{p-1}du \\
& \qquad \qquad =  \int_{0}^{\infty}\lambda\Big(s \in \R_+ \ : \ \min\Big\{\frac{f(s)}{v},1\Big\}>u\Big) pu^{p-1}du
\\
& \qquad \qquad = \|\min\{v^{-1}f,1\}\|_{L^p(\R_+)}^p = \int_{\{f\leq v\}}\Big(\frac{f(s)}{v}\Big)^p ds + d(v;f),
\end{align*}
which gives the conclusion.
\end{proof}
\begin{corollary}
\label{cor:distTheta} Let $0<p,q<\infty$. If $f \in S(\R_+)$, then for any $v>0$,
\begin{equation}
\label{eqn:distThetaLessInfty} d(v;\Theta_{p,q}(f)) = \int_{\{f>v\}}\Big(\frac{f(s)}{v}\Big)^p ds + \int_{\{f\leq v\}}\Big(\frac{f(s)}{v}\Big)^q ds
\end{equation}
and
\begin{equation}
\label{eqn:distThetaInfty}
d(v;\Theta_{p,\infty}(f)) = \int_{\{f>v\}}\Big(\frac{f(s)}{v}\Big)^p ds.
\end{equation}
\end{corollary}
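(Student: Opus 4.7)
The plan is to reduce the corollary to the two preceding lemmas via the elementary observation that $\psi_p$ and $\phi_q$ have disjoint supports in $\R_+$: one lives on $[1,\infty)$ and the other on $(0,1)$. Consequently the pointwise sum $\Theta_{p,q}(f)(s,t) = f(s)\psi_p(t) + f(s)\phi_q(t)$ has the property that the two super-level sets
$$\{(s,t) \in \R_+\ti\R_+ : f(s)\psi_p(t) > v\} \subset \R_+\ti[1,\infty),$$
$$\{(s,t) \in \R_+\ti\R_+ : f(s)\phi_q(t) > v\} \subset \R_+\ti(0,1)$$
are disjoint, so their union is exactly the super-level set of $\Theta_{p,q}(f)$. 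Therefore
$$d(v;\Theta_{p,q}(f)) = d(v;\Psi_p(f)) + d(v;\Phi_q(f)) \qquad (v>0),$$
which is the only genuinely new input needed.

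Next I would insert the formulas provided by Lemma~\ref{lem:dist} and Lemma~\ref{lem:distPsi}. Assuming $0<p,q<\infty$ and $d(v;f)<\infty$, these give
$$d(v;\Psi_p(f)) = \int_{\{f>v\}} (f(s)/v)^p\,ds - d(v;f),$$
$$d(v;\Phi_q(f)) = \int_{\{f\leq v\}} (f(s)/v)^q\,ds + d(v;f).$$
Adding the two expressions, the $\pm d(v;f)$ terms cancel and (\ref{eqn:distThetaLessInfty}) drops out immediately. For the case $q=\infty$ one uses that $\Phi_{\infty}(f) = f\ot\chi_{(0,1)}$, so $d(v;\Phi_{\infty}(f)) = d(v;f)$ by the second formula in Lemma~\ref{lem:dist}; again the $d(v;f)$ contributions cancel and one obtains (\ref{eqn:distThetaInfty}).

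The only subtlety, hardly an obstacle, is the regime where $d(v;f) = \infty$. In that situation $\{f>v\}$ has infinite measure and the integrand $(f/v)^p$ is at least $1$ on this set, so the right-hand sides of (\ref{eqn:distThetaLessInfty}) and (\ref{eqn:distThetaInfty}) are $+\infty$; on the left, $|\Theta_{p,q}(f)|(s,t) \geq f(s)\chi_{\{f>v\}}(s)\chi_{[1,2]}(t)$ forces $d(v;\Theta_{p,q}(f)) = +\infty$ as well. One may therefore either treat this case by direct inspection or reduce to the finite case by truncating $f$ to $f\chi_{\{f\leq N\}}$ and letting $N\to\infty$, invoking monotone convergence of super-level set measures.
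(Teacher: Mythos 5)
Your main argument coincides with the paper's proof: the supports of $f\ot\phi_q$ and $f\ot\psi_p$ are disjoint, so $d(v;\Theta_{p,q}(f))=d(v;\Phi_q(f))+d(v;\Psi_p(f))$, and when $d(v;f)<\infty$ the formulas of Lemmas~\ref{lem:dist} and \ref{lem:distPsi} add up with the $\pm d(v;f)$ terms cancelling, for $q<\infty$ and $q=\infty$ alike. That part is fine.

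The treatment of the case $d(v;f)=\infty$, however, does not work as written. The pointwise bound $|\Theta_{p,q}(f)|(s,t)\geq f(s)\chi_{\{f>v\}}(s)\chi_{[1,2]}(t)$ is false: on $(1,2]$ one has $\theta_{p,q}(t)=t^{-\frac{1}{p}}<1$, so $f(s)\theta_{p,q}(t)<f(s)$ there. The obvious repair, $\Theta_{p,q}(f)\geq 2^{-\frac{1}{p}}f(s)$ on $\R_+\ti[1,2]$, only gives $d(v;\Theta_{p,q}(f))\geq d(2^{\frac{1}{p}}v;f)$, and this can be finite (even zero) while $d(v;f)=\infty$: take $f\equiv 2v$ and $p\leq 1$, for instance. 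Your fallback via truncation has the same defect: replacing $f$ by $f\chi_{\{f\leq N\}}$ truncates the \emph{values} of $f$ and in general still has $d(v;f\chi_{\{f\leq N\}})=\infty$ (again $f\equiv 2v$), so it does not reduce to the situation covered by Lemma~\ref{lem:distPsi}; one would have to truncate the \emph{support}, e.g.\ use $f\chi_{[0,N]}$ and monotone convergence. The clean one-line argument — the one the paper uses — goes through the strip $\R_+\ti(0,1)$ instead of $\R_+\ti[1,2]$: there $\phi_q\geq 1$ (also for $q=\infty$), whence
\begin{equation*}
d(v;\Theta_{p,q}(f)) \geq d(v;\Phi_q(f)) \geq d(v;f) = \infty,
\end{equation*}
while the right-hand sides of (\ref{eqn:distThetaLessInfty}) and (\ref{eqn:distThetaInfty}) dominate $\lambda(\{f>v\})=\infty$ because the integrand exceeds $1$ on $\{f>v\}$, as you correctly noted. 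With that substitution your proof is complete and identical in substance to the paper's.
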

\begin{proof}
Since $f\ot\phi_q$ and $f\ot\psi_p$ have disjoint supports we have $d(v;\Phi_q(f)) + d(v;\Psi_p(f)) = d(v;\Theta_{p,q}(f))$. Therefore, if $d(v;f)<\infty$, then
(\ref{eqn:distThetaLessInfty}) and (\ref{eqn:distThetaInfty}) follow immediately from Lemmas~\ref{lem:dist} and \ref{lem:distPsi}. On the other hand, for any $v>0$
$$d(v;f)\leq d(v;\Phi_q(f)) \leq d(v;\Theta_{p,q}(f))$$
and
$$d(v;f)\leq \int_{\{f>v\}}\Big(\frac{f(s)}{v}\Big)^p ds.$$
Hence, if $d(v;f)=\infty$, then both sides of (\ref{eqn:distThetaLessInfty}) and (\ref{eqn:distThetaInfty}) are equal to $\infty$.\epf
\end{proof}
\begin{lemma}
\label{lem:keyDI} Let $E$ be a symmetric quasi-Banach function space on $\R_+$. Let $\alpha>0$ and $f \in E_+$. Suppose that either $p<p_E\leq q_E<q<\infty$ or $p<p_E$
and $q=\infty$ and $g \in S(\R_+)$ satisfies
\begin{equation}
\label{eqn:keyDIpq}
d(\alpha v;g) \leq d(v;\Theta_{p,q}f) \qquad (v>0).
\end{equation}
Then $g \in E$ and
$$\|g\|_E \leq \alpha \|\Theta_{p,q}\|_{E\rightarrow E} \ \|f\|_E.$$
\end{lemma}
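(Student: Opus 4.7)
The plan is to transfer the distribution-function inequality \eqref{eqn:keyDIpq} into a pointwise comparison of decreasing rearrangements, and then to invoke the symmetry of $E$ together with the boundedness of $\Theta_{p,q}$ already established earlier in the section. Concretely, under the hypotheses on the Boyd indices, Corollary~\ref{cor:distEstTheta} (in the case $p<q<\infty$) or Lemma~\ref{lem:distEstPsi} combined with the fact that $\Phi_{\infty}$ is an isometry (in the case $q=\infty$) ensures that $\Theta_{p,q}$ is bounded from $E(\R_+)$ into $E(\R_+\ti\R_+)$. In particular, $\Theta_{p,q}f$ belongs to $E(\R_+\ti\R_+)$ with $\|\Theta_{p,q}f\|_{E(\R_+\ti\R_+)}\leq \|\Theta_{p,q}\|_{E\to E}\|f\|_E$.

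The key step is to pass from \eqref{eqn:keyDIpq} to an inequality of decreasing rearrangements. Fix $t>0$. For any $w>0$ such that $d(w;\Theta_{p,q}f)\leq t$, the hypothesis gives $d(\alpha w;g)\leq d(w;\Theta_{p,q}f)\leq t$, and hence by definition of the decreasing rearrangement $\mu_t(g)\leq \alpha w$. Taking the infimum over all admissible $w$ yields
$$\mu_t(g)\leq \alpha\,\mu_t(\Theta_{p,q}f)\qquad (t>0).$$

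Since $\mu(\Theta_{p,q}f)\in E$ and $E$ is symmetric, the pointwise bound on $\mu(g)$ forces $g\in E$ and
$$\|g\|_E=\|\mu(g)\|_E\leq \alpha\,\|\mu(\Theta_{p,q}f)\|_E=\alpha\,\|\Theta_{p,q}f\|_{E(\R_+\ti\R_+)}\leq \alpha\,\|\Theta_{p,q}\|_{E\to E}\|f\|_E,$$
which is the desired estimate. The only genuinely nontrivial ingredient is the boundedness of $\Theta_{p,q}$, and this is already supplied by the previous lemmas; the remainder of the argument is a purely formal rearrangement manipulation, so I do not anticipate any real obstacle. The one small subtlety worth flagging in the write-up is the case $q=\infty$, which is not covered by Corollary~\ref{cor:distEstTheta} as stated and must be handled by the separate observation that $\Phi_{\infty}$ is an isometry.
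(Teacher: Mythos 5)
Your proposal is correct and follows essentially the same route as the paper: pass from the distribution-function inequality to $\mu_t(g)\leq\alpha\mu_t(\Theta_{p,q}f)$ (the paper phrases this as taking right-continuous inverses, you spell out the infimum argument) and then invoke the symmetry of $E$ together with the boundedness of $\Theta_{p,q}$ from Corollary~\ref{cor:distEstTheta}. Your extra care with the case $q=\infty$, via Lemma~\ref{lem:distEstPsi} and the isometry of $\Phi_{\infty}$, is a legitimate refinement of a point the paper leaves implicit.
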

\begin{proof}
We take right continuous inverses in (\ref{eqn:keyDIpq}) to obtain
$$\mu_t(g) \leq \alpha\mu_t(\Theta_{p,q}f) \qquad (t\geq 0).$$
As $E$ is symmetric, it follows from Corollary~\ref{cor:distEstTheta} that $g \in E$ and moreover,
$$\|g\|_{E} \leq \alpha \|\Theta_{p,q}f\|_{E(\R_+\ti\R_+)} \leq \alpha \|\Theta_{p,q}\|_{E\rightarrow E} \ \|f\|_{E}.$$
\end{proof}
The following result is reminiscent of Calder\'{o}n's characterization of weak type operators (see Theorem~\ref{thm:CalChar} for a noncommutative extension). Recall that if $D$ is a convex set in $S(\R_+)$, then an operator $T:D\rightarrow S(\R_+)$ is called \emph{subconvex} if for any $f,g \in D$ and $t \in [0,1]$ we have
$$T(tf + (1-t)g) \leq tT(f) + (1-t)T(g).$$
\begin{theorem}
\label{thm:WTChar}
Let $0<p\leq q\leq\infty$. A subconvex operator $T:L^{p}(\R_+)_+ + L^{q}(\R_+)_+\rightarrow S(\R_+)$ is simultaneously of Marcinkiewicz weak types
$(p,p)$ and $(q,q)$, i.e.,
\begin{equation}
\label{eqn:WTChar1}
\|Tf\|_{L^{r,\infty}(\R_+)} \leq C_r \|f\|_{L^r(\R_+)} \qquad (f \in L^r(\R_+)_+, \ r=p,q)
\end{equation}
if and only if there is some $\al>0$ such that for all $f \in S(\R_+)$,
\begin{equation}
\label{eqn:WTChar2}
d(\al v;Tf) \leq d(v;\Theta_{p,q}(f)) \qquad (v>0).
\end{equation}
\end{theorem}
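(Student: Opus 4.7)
The plan is to handle the two implications separately, with the harder direction exploiting a level-set decomposition of $f$ combined with the subconvexity of $T$.

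For (\ref{eqn:WTChar2}) $\Rightarrow$ (\ref{eqn:WTChar1}), taking right-continuous inverses in (\ref{eqn:WTChar2}) yields $\mu_t(Tf) \leq \alpha\, \mu_t(\Theta_{p,q}f)$, so it suffices to verify the weak-type bounds $\|\Theta_{p,q}f\|_{L^{r,\infty}} \lesssim \|f\|_{L^r}$ for $r = p$ and $r = q$. These follow directly from the explicit formulas (\ref{eqn:distThetaLessInfty}) and (\ref{eqn:distThetaInfty}) in Corollary~\ref{cor:distTheta}: both summands of $v^r d(v; \Theta_{p,q}f)$ are bounded by $\|f\|_{L^r}^r$, using $f^q \leq v^{q-p} f^p$ on $\{f \leq v\}$ for $r = p$, and $f^p \leq v^{p-q} f^q$ on $\{f > v\}$ for $r = q$.

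For the converse, fix $v > 0$ and $f \in L^p(\R_+)_+ + L^q(\R_+)_+$ and split $f = f_1 + f_2$ with $f_1 := f\chi_{\{f > v\}}$ and $f_2 := f\chi_{\{f \leq v\}}$. A short H\"older-type argument, starting from any decomposition $f = g + h$ with $g \in L^p_+$, $h \in L^q_+$ and using the inclusion $\{f > v\} \subseteq \{g > v/2\} \cup \{h > v/2\}$, shows that $f_1 \in L^p_+$ and $f_2 \in L^q_+$, so that both lie in the domain of $T$. Since $T$ is only subconvex rather than sublinear, I would rewrite $f = \tfrac{1}{2}(2f_1) + \tfrac{1}{2}(2f_2)$ and apply subconvexity to obtain the pointwise bound $Tf \leq \tfrac{1}{2}\bigl(T(2f_1) + T(2f_2)\bigr)$, which in turn gives the inclusion $\{Tf>\alpha v\}\subseteq\{T(2f_1)>\alpha v\}\cup\{T(2f_2)>\alpha v\}$ and hence
$$d(\alpha v; Tf) \leq d(\alpha v; T(2f_1)) + d(\alpha v; T(2f_2)).$$

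Applying the weak-type hypotheses separately, when $q < \infty$ the first summand is at most $(2C_p/\alpha)^p v^{-p} \int_{\{f>v\}} f^p$ and the second at most $(2C_q/\alpha)^q v^{-q} \int_{\{f\leq v\}} f^q$; the choice $\alpha := 2\max(C_p, C_q)$ makes both prefactors at most $1$, and comparison with (\ref{eqn:distThetaLessInfty}) then yields $d(\alpha v; Tf) \leq d(v; \Theta_{p,q} f)$. When $q = \infty$, the argument is even shorter: $\|T(2f_2)\|_\infty \leq 2 C_\infty v$ forces $d(\alpha v; T(2f_2)) = 0$ as soon as $\alpha \geq 2 C_\infty$, and (\ref{eqn:distThetaInfty}) supplies the matching identity. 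The only point requiring real care is the initial bookkeeping that the truncations genuinely sit in the respective $L^r$-spaces; after that, the definition of $\Theta_{p,q}$ is engineered precisely so the weak-type constants line up with the two integrals defining $d(v; \Theta_{p,q} f)$, which is the real content of the theorem.
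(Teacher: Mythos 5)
Your argument follows the paper's proof almost step for step: the same splitting $f=\tfrac12(2f\chi_{\{f>v\}})+\tfrac12(2f\chi_{\{f\leq v\}})$, the same use of subconvexity, the same constant $\alpha=2\max\{C_p,C_q\}$, and the same comparison with Corollary~\ref{cor:distTheta}; the easy direction via right-continuous inverses and the weak-type boundedness of $\Theta_{p,q}$ is just a repackaging of the paper's pointwise comparison of the two integrands. However, the one step you single out as "the only point requiring real care" is handled incorrectly in the case $q=\infty$. Your claim that any decomposition $f=g+h$ with $g\in L^p_+$, $h\in L^q_+$ forces $f\chi_{\{f>v\}}\in L^p$ is true for $q<\infty$ (there the H\"older argument on the finite-measure set $\{f>v\}$ works), but it is false for $q=\infty$: take $f\equiv 1\in L^\infty(\R_+)_+$ and $0<v<1$; then $f\chi_{\{f>v\}}=f\notin L^p(\R_+)$, even though $\{g>v/2\}\cup\{h>v/2\}$ may have infinite measure, so the inclusion $\{f>v\}\subseteq\{g>v/2\}\cup\{h>v/2\}$ gives nothing. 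Consequently you cannot, in general, apply the Marcinkiewicz weak type $(p,p)$ hypothesis to $2f\chi_{\{f>v\}}$ for such $v$, and your chain of estimates breaks exactly there.

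The repair is the reduction the paper makes at the outset: one may assume $d(v;\Theta_{p,q}(f))<\infty$, since otherwise (\ref{eqn:WTChar2}) is vacuous at that $v$. Under this assumption the needed memberships are immediate from Corollary~\ref{cor:distTheta} itself, because $d(v;\Theta_{p,q}(f))<\infty$ says precisely that $\int_{\{f>v\}}(f/v)^p\,ds<\infty$ (and, when $q<\infty$, that $\int_{\{f\leq v\}}(f/v)^q\,ds<\infty$), i.e.\ $f\chi_{\{f>v\}}\in L^p$ and $f\chi_{\{f\leq v\}}\in L^q$; for $q=\infty$ the bound $f\chi_{\{f\leq v\}}\leq v$ is automatic. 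Note that in the counterexample above one indeed has $d(v;\Theta_{p,\infty}(f))=v^{-p}\|f\chi_{\{f>v\}}\|_{L^p}^p=\infty$, so nothing is lost. With this reduction in place of your H\"older bookkeeping (which is then superfluous even for $q<\infty$), the rest of your proof is correct and coincides with the paper's. A last, minor point: in the easy direction you only treat finite $r$; for $q=\infty$ one should also record that (\ref{eqn:WTChar2}) yields $\|Tf\|_\infty\leq\alpha\|f\|_\infty$, which follows at once since $d(v;\Theta_{p,\infty}(f))=0$ for $v>\|f\|_\infty$.
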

\begin{proof}
Suppose that (\ref{eqn:WTChar1}) holds and fix $v>0$. We may assume that $d(v;\Theta_{p,q}(f))<\infty$, for otherwise there is nothing to prove. By
Corollary~\ref{cor:distTheta} it follows that $f\chi_{\{f>v\}} \in L^p(\R_+)$ and $f\chi_{\{f\leq v\}} \in L^q(\R_+)$. If $C_{p,q}=\max\{C_p,C_q\}$, then by subconvexity,
\begin{eqnarray*}
d(2C_{p,q}v;Tf) & \leq & d(2C_{p,q}v;\tfrac{1}{2}T(2f\chi_{\{f\leq v\}}) + \tfrac{1}{2}T(2f\chi_{\{f>v\}})) \\
& \leq & d(2C_{p,q}v;T(2f\chi_{\{f\leq v\}})) + d(2C_{p,q}v;T(2f\chi_{\{f>v\}})).
\end{eqnarray*}
By (\ref{eqn:WTChar1}) and Corollary~\ref{cor:distTheta},
\begin{align*}
& d(2C_{p,q}v;Tf) \\
& \qquad \leq (2C_{p,q}v)^{-q}C_q^q \|2f\chi_{\{f\leq v\}}\|_{L^q(\R_+)}^q + (2C_{p,q}v)^{-p}C_p^p \|2f\chi_{\{f>v\}}\|_{L^p(\R_+)}^p \\
& \qquad \leq d(v;\Theta_{p,q}f).
\end{align*}
Suppose now that (\ref{eqn:WTChar2}) holds. If $q<\infty$, then by Corollary~\ref{cor:distTheta},
$$d(\al v;Tf) \leq d(v;\Theta_{p,q}(f)) = \int_{\{f\leq v\}} v^{-q}f(s)^q ds + \int_{\{f>v\}}v^{-p}f(s)^p ds.$$
Since $p\leq q$ we have
$$(v^{-1}f)^p\chi_{\{f>v\}} \leq (v^{-1}f)^q\chi_{\{f>v\}}, \qquad (v^{-1}f)^q\chi_{\{f\leq v\}} \leq (v^{-1}f)^p\chi_{\{f\leq v\}}$$
and therefore,
$$d(\al v;Tf) \leq v^{-r} \|f\|_{L^r(\R_+)}^r \qquad (r=p,q).$$
On the other hand, if $q=\infty$, then it is clear that
$$d(\al v;Tf) \leq d(v;\Theta_{p,\infty}(f)) = \int_{\{f>v\}}v^{-p}f(s)^p ds \leq v^{-p}\|f\|_{L^p(\R_+)}^p.$$
Moreover, for any $v>0$ we have
$$d(\al v;T(f\chi_{\{f\leq v\}})) = 0.$$
Applying this for $v=\|f\|_{\infty}$ yields
$$Tf \leq \alpha \|f\|_{\infty} \qquad \mathrm{a.e.}$$
This completes the proof.
\end{proof}
The following result shows that inequality (\ref{eqn:keyDIpq}) also implies $\Phi$-moment inequalities.
\begin{lemma}
\label{lem:keyDIOrlicz} Let $\Phi$ be an Orlicz function on $\R_+$ which satisfies the global $\Delta_2$-condition. Let $\alpha>0$ and $f \in (L_{\Phi})_+$. Suppose that
either $p<p_{\Phi}\leq q_{\Phi}<q<\infty$ or $p<p_{\Phi}$ and $q=\infty$ and $g \in S(\R_+)$ satisfies (\ref{eqn:keyDIpq}). Then $g \in L_{\Phi}$ and
\begin{equation}
\label{eqn:PhimomentRV}
\int_0^{\infty} \Phi(|g(t)|) dt \lesssim_{\Phi} \int_0^{\infty} \Phi(f(t)) dt.
\end{equation}
\end{lemma}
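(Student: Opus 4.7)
The scheme mirrors Lemma~\ref{lem:keyDI}, with the symmetric quasi-norm replaced by a $\Phi$-moment. First I would take right-continuous inverses in (\ref{eqn:keyDIpq}) to obtain $\mu_t(g) \leq \alpha\,\mu_t(\Theta_{p,q}f)$ for all $t \geq 0$. Since for any nondecreasing $\Phi$ with $\Phi(0)=0$ the integral $\int_0^\infty \Phi(|h|)\,dt$ depends on $h$ only through its distribution, rearrangement invariance combined with the global $\Delta_2$-condition $\Phi(\alpha t)\lesssim_{\alpha,\Phi}\Phi(t)$ yields
\begin{equation*}
\int_0^\infty \Phi(|g(t)|)\,dt \;\lesssim_{\alpha,\Phi}\; \iint_{\R_+\ti\R_+} \Phi(|\Theta_{p,q}f(s,t)|)\,ds\,dt.
\end{equation*}

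It then remains to bound this double integral by a constant times $\int_0^\infty \Phi(f(s))\,ds$. Because $\phi_q$ and $\psi_p$ have disjoint supports, Fubini splits the integral in two. For each fixed $s$, I would evaluate the inner integrals via the substitutions $u = f(s)\,t^{-1/q}$ (on the support of $\phi_q$) and $u = f(s)\,t^{-1/p}$ (on the support of $\psi_p$), converting them respectively into
\begin{equation*}
q\,f(s)^q \int_{f(s)}^\infty \Phi(u)\,\frac{du}{u^{q+1}} \qquad \text{and} \qquad p\,f(s)^p \int_0^{f(s)}\Phi(u)\,\frac{du}{u^{p+1}}.
\end{equation*}
The Matuszewska-Orlicz characterization (\ref{eqn:charOrlInd}), applied with $q_\Phi<q$ and $p<p_\Phi$ respectively, bounds each of these by a constant depending only on $p$, $q$ and $\Phi$ times $\Phi(f(s))$. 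Integrating in $s$ delivers (\ref{eqn:PhimomentRV}).

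The case $q=\infty$ runs through the same scheme: by (\ref{eqn:distThetaInfty}) only the $\psi_p$-part requires the nontrivial estimate, while the $\phi_\infty$-part contributes merely $\int_0^1\Phi(f(s))\,dt = \Phi(f(s))$. The main obstacle I anticipate is purely bookkeeping: unpacking the $O$-symbols in (\ref{eqn:charOrlInd}) into explicit inequalities and combining them with the multiplicative constants produced by $\Delta_2$ and by the two substitutions. No deeper difficulty arises, since all the necessary ingredients — the distributional hypothesis, the explicit form of $\Theta_{p,q}$, and the Orlicz-index characterizations — are already in place before the lemma is stated.
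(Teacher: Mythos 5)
Your argument is correct, and it runs parallel to, but not identical with, the paper's proof. The paper never passes to rearrangements: it rewrites hypothesis (\ref{eqn:keyDIpq}) via Corollary~\ref{cor:distTheta} in terms of the pull-back measure $\lambda_f$, integrates the distributional inequality against the Stieltjes measure $d\Phi(v)$ (a layer-cake in $v$), applies Fubini, and then uses (\ref{eqn:derivOrlicz}) to pass from $\int_t^{\infty}v^{-q}\,d\Phi(v)$ to $\int_t^{\infty}v^{-q}\Phi(v)\,\frac{dv}{v}$ before invoking (\ref{eqn:charOrlInd}). You instead follow the template of Lemma~\ref{lem:keyDI} literally: take right-continuous inverses, use that the $\Phi$-moment is determined by the distribution together with $\Delta_2$ to absorb $\alpha$, and then establish a ``$\Phi$-moment boundedness of $\Theta_{p,q}$'' by Fubini and the substitutions $u=f(s)t^{-1/q}$, $u=f(s)t^{-1/p}$; the resulting integrals are exactly those controlled by (\ref{eqn:charOrlInd}), where one should note (as the paper also uses implicitly) that the defining sets in (\ref{eqn:charOrlInd}) are upward, respectively downward, closed, so the $O$-bounds hold for your chosen $q>q_{\Phi}$ and $p<p_{\Phi}$. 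What your route buys: it never touches $\Phi'$, so (\ref{eqn:derivOrlicz}) is not needed as a separate ingredient, and in the non-convex setting of Remark~\ref{rem:OrliczNonconvex} your proof requires only (\ref{eqn:D2global}) with the indices understood via (\ref{eqn:charOrlInd}). What the paper's route buys: working purely with distribution functions, it avoids forming $\mu(\Theta_{p,q}f)$ altogether; in your version this is harmless, since if $d(v;\Theta_{p,q}f)=\infty$ for every $v$ then both sides of your intermediate inequality are infinite, and your final computation shows this cannot occur when $\int_0^{\infty}\Phi(f(t))\,dt<\infty$.
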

\begin{proof}
Suppose that $q_{\Phi}<q<\infty$. Let $\lambda_f$ denote the pull-back measure on $\R_+$ associated with $f$ and $\lambda$. By corollary~\ref{cor:distTheta} we can
rewrite (\ref{eqn:keyDIpq}) as
$$d(\alpha v;g) \leq v^{-q}\int_0^v t^q d\lambda_f(t) + v^{-p}\int_v^{\infty} t^p d\lambda_f(t).$$
Integrating with respect to $\Phi$ and using Fubini's theorem yields
\begin{align*}
& \int_0^{\infty} \Phi(|g(t)|) \ dt \\
& \qquad \lesssim_{\Phi} \int_0^{\infty}v^{-q}\int_0^v t^q d\lambda_f(t)d\Phi(v) + \int_0^{\infty}v^{-p}\int_v^{\infty} t^p d\lambda_f(t)d\Phi(v) \\
& \qquad = \int_0^{\infty}\int_t^{\infty} v^{-q}t^q d\Phi(v)d\lambda_f(t) + \int_0^{\infty}\int_0^{t} v^{-p}t^p d\Phi(v)d\lambda_f(t).
\end{align*}
By (\ref{eqn:derivOrlicz}) and (\ref{eqn:charOrlInd}), we find
$$\int_t^{\infty} v^{-q} d\Phi(v) \lesssim_{\Phi} \int_t^{\infty} v^{-q}\Phi(v)\frac{dv}{v} \lesssim_{\Phi} t^{-q}\Phi(t).$$
Similarly,
$$\int_0^{t} v^{-p} d\Phi(v) \lesssim_{\Phi} t^{-p}\Phi(t).$$
We conclude that
$$\int_0^{\infty} \Phi(|g(t)|) dt \lesssim_{\Phi} \int_0^{\infty} \Phi(t)d\lambda_f(t) = \int_0^{\infty} \Phi(f(t)) dt.$$
The statement for $q=\infty$ is proved analogously.
\end{proof}
\begin{remark}
\label{rem:OrliczNonconvex} From the presented proof it is clear that the result in Lemma~\ref{lem:keyDIOrlicz}, and hence the $\Phi$-moment inequalities discussed
below, remain valid if $\Phi$ is non-convex, provided that it satisfies (\ref{eqn:D2global}) and (\ref{eqn:derivOrlicz}), and $p_{\Phi},q_{\Phi}$ are understood as in
(\ref{eqn:charOrlInd}). It should be noted that in this case $L_{\Phi}$ is in general no longer a quasi-Banach space.
\end{remark}
Boyd's interpolation theorem for Marcinkiewicz weak type operators, as well as a $\Phi$-moment version, are now an immediate consequence of the previous observations.
\begin{theorem}
\label{thm:BoydCom}
Fix $0<p\leq q\leq\infty$. Let $T:L^{p}(\R_+)_+ + L^{q}(\R_+)_+\rightarrow S(\R_+)$ be a subconvex operator of Marcinkiewicz weak types $(p,p)$ and $(q,q)$. If $E$ is a symmetric quasi-Banach function space on $\R_+$, and either $p<p_E\leq q_E<q<\infty$ or $p<p_E$ and $q=\infty$ holds, then
$$\|Tf\|_{E(\R_+)} \leq 2 \|\Theta_{p,q}\|_{E\rightarrow E}\max\{C_p,C_q\} \  \|f\|_{E(\R_+)} \qquad (f \in E(\R_+)_+).$$
On the other hand, if $\Phi$ is an Orlicz function on $\R_+$ satisfying the global $\Delta_2$-condition and either $p<p_{\Phi}\leq q_{\Phi}<q<\infty$ or $p<p_{\Phi}$ and $q=\infty$ holds, then
$$\int_0^{\infty} \Phi(|Tf(t)|) dt \lesssim_{\Phi,C_p,C_q} \int_0^{\infty} \Phi(f(t)) dt \qquad (f \in (L_{\Phi})_+).$$
\end{theorem}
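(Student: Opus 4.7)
My plan is to chain together the two previous results that do essentially all the work. The backbone is Theorem~\ref{thm:WTChar}, which characterizes subconvex operators of simultaneous Marcinkiewicz weak types $(p,p)$ and $(q,q)$ as precisely those satisfying a distribution inequality of the form
$$d(\alpha v; Tf) \leq d(v; \Theta_{p,q}(f)) \qquad (v>0)$$
for some $\alpha>0$, and Lemma~\ref{lem:keyDI}, which upgrades such an inequality to the symmetric quasi-Banach norm bound $\|g\|_E \leq \alpha\,\|\Theta_{p,q}\|_{E\to E}\,\|f\|_E$. So the theorem should follow by simply composing these two statements.

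Concretely, I would first invoke the forward direction of Theorem~\ref{thm:WTChar}. Tracking its proof (where the bound $d(2C_{p,q}v;Tf)\leq d(v;\Theta_{p,q}f)$ with $C_{p,q}=\max\{C_p,C_q\}$ was derived explicitly) gives the constant $\alpha = 2\max\{C_p,C_q\}$. Next, the hypothesis on the Boyd indices ($p<p_E\leq q_E<q<\infty$, or $p<p_E$ with $q=\infty$) guarantees via Corollary~\ref{cor:distEstTheta} (resp.\ Lemma~\ref{lem:distEstPsi} combined with the trivial observation that $f\mapsto f\otimes\chi_{(0,1)}$ is an $E$-isometry) that $\Theta_{p,q}$ is bounded on $E$. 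Applying Lemma~\ref{lem:keyDI} with $g=Tf$ and this value of $\alpha$ then immediately delivers
$$\|Tf\|_{E(\R_+)} \leq 2\max\{C_p,C_q\}\,\|\Theta_{p,q}\|_{E\to E}\,\|f\|_{E(\R_+)},$$
which is the first assertion.

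For the $\Phi$-moment version I would repeat the same assembly, with Lemma~\ref{lem:keyDIOrlicz} substituting for Lemma~\ref{lem:keyDI}. The standing assumptions on $\Phi$, namely the global $\Delta_2$-condition together with the correct positioning of $p,q$ relative to the Matuszewska-Orlicz indices $p_\Phi,q_\Phi$, are exactly what that lemma demands. Since $p_\Phi = p_{L_\Phi}$ and $q_\Phi = q_{L_\Phi}$, the hypotheses line up without further work. No step here looks genuinely hard; the only minor care needed is to extract the explicit constant $\alpha = 2\max\{C_p,C_q\}$ from the first half of the proof of Theorem~\ref{thm:WTChar} and to check that the $q=\infty$ branch of that proof also gives a distribution inequality of the required form, so that Lemma~\ref{lem:keyDI} and Lemma~\ref{lem:keyDIOrlicz} apply verbatim in both regimes.
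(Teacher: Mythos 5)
Your proposal is correct and follows exactly the paper's own argument: extract the distribution inequality $d(2\max\{C_p,C_q\}v;Tf)\leq d(v;\Theta_{p,q}f)$ from the (proof of the) forward direction of Theorem~\ref{thm:WTChar}, then apply Lemma~\ref{lem:keyDI} for the norm bound and Lemma~\ref{lem:keyDIOrlicz} for the $\Phi$-moment version. Nothing is missing.
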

\begin{proof}
As we have seen in the proof of Theorem~\ref{thm:WTChar},
$$d(2\max\{C_p,C_q\}v;Tf)\leq d(v;\Theta_{p,q}f).$$
The assertions now follow from Lemmas~\ref{lem:keyDI} and \ref{lem:keyDIOrlicz}, respectively.
\end{proof}
\begin{remark}
Theorem~\ref{thm:BoydCom} can be extended to a vector-valued interpolation theorem using the following simple trick contained in the proof of \cite{BCP62}, Lemma 1. Suppose that $X,Y$ are Banach spaces and $T:L^{p,1}(\R_+;X) + L^{q,1}(\R_+;X)\rightarrow S(\R_+;Y)$ satisfies
\begin{equation*}
\|Tf\|_{L^{r,\infty}(\R_+;Y)} \leq C_r \|f\|_{L^{r,1}(\R_+;X)} \qquad (f \in L^{r,1}(\R_+;X), \ r=p,q).
\end{equation*}
For a fixed $f$ set $k(f)=(f/\|f\|_X)\chi_{\{f\neq 0\}}$ and define the subconvex operator
$$Sg = \|T(gk(f))\|_Y \qquad (g \in L^{p,1}(\R_+) + L^{q,1}(\R_+)).$$
Since $\|k(f)\|_X=1$, it follows that
$$\|Sg\|_{L^{r,\infty}(\R_+)} \leq C_r \|gk(f)\|_{L^{r,1}(\R_+;X)} = C_r \|g\|_{L^{r,1}(\R_+)} \qquad (g \in L^{r,1}(\R_+), \ r=p,q)$$
and hence by the scalar-valued Boyd interpolation theorem,
$$\|Sg\|_{E} \leq 2\max\{C_p, C_q\}\|\Theta_{p,q}\| \ \|g\|_E \qquad (g \in E).$$
Taking $g=\|f\|_X$ yields
\begin{equation*}
\|Tf\|_{E(\R_+;Y)} \leq 2\max\{C_p, C_q\}\|\Theta_{p,q}\| \ \|f\|_{E(\R_+;X)} \qquad (f \in E(\R_+;X)).
\end{equation*}
\end{remark}

\section{Noncommutative Boyd interpolation theorems}
\label{sec:BoydNC}

In this section we prove a noncommutative version of Boyd's theorem, Theorem~\ref{thm:BoydPos} below. We first recall some terminology and preliminary results for
noncommutative symmetric spaces. Let $\cM$ be a semi-finite von Neumann algebra acting on a complex Hilbert space $H$, which is equipped with a normal, semi-finite,
faithful trace $\tr$. The \emph{distribution function} of a closed, densely defined operator $x$ on $H$, which is affiliated with $\cM$, is given by
\begin{equation*}
\label{eqn:distFunOperator}
d(v;x) = \tr(e^{|x|}(v,\infty)) \qquad (v\geq 0),
\end{equation*}
where $e^{|x|}$ is the spectral measure of $|x|$. The \emph{decreasing rearrangement} of $x$ is defined by
\begin{equation*}
\label{eqn:DRoperator}
\mu_t(x) = \inf\{v>0 \ : \ d(v;x)\leq t\} \qquad (t\geq 0).
\end{equation*}
We say that $x$ is $\tr$\textit{-measurable} if $d(v;x)<\infty$ for some $v>0$. We let $S(\tr)$ be the linear space of all $\tr$-measurable operators, which is a metrizable, complete topological $*$-algebra with respect to the measure topology. We denote by $S_0(\tr)$ the linear subspace of all $x \in S(\tr)$ such that $d(v;x)<\infty$ for all $v>0$. One can introduce a partial order on the linear subspace $S(\tr)_h$ of all self-adjoint operators in $S(\tr)$ by setting, for a self-adjoint operator $x$,
$$x \geq 0 \ \mathrm{if \ and \ only \ if} \ \langle x\xi,\xi\rangle_H\geq 0 \ \mathrm{for \ all} \ \xi \in D(x),$$
where $D(x)$ is the domain of $x$ in $H$. We write $x\leq y$ for $x,y \in S(\tr)_h$ if and only if $y-x\geq 0$. Under this partial ordering $S(\tr)_h$ is a partially
ordered vector space. Let $S(\tr)_+$ denote the positive cone of all $x \in S(\tr)_h$ satisfying $x\geq 0$. It can be shown that $S(\tr)_+$ is closed with respect to the
measure topology (\cite{DDP93}, Proposition 1.4).\par
Throughout our exposition, we will tacitly use many properties of distribution functions and decreasing
rearrangements. For the convenience of the reader we collect these facts in the following two propositions. The first result is essentially contained in the proof of
\cite{Nel74}, Theorem 1.
\begin{proposition}
\label{pro:DistProperties}
If $x,y \in S(\tr)$, then:
\begin{enumerate}
\renewcommand{\labelenumi}{(\alph{enumi})}
\item $d(v;x) = d(v;\mu(x))$ for all $v\geq 0$;
\item $d(v+w;x+y) \leq d(v;x) + d(w;y)$ for all $v,w\geq 0$;
\item if $|x|\leq|y|$ then $d(v;x) \leq d(v;y)$ for all $v\geq 0$.
\end{enumerate}
\end{proposition}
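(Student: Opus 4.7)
The plan is to deduce all three parts from the spectral theorem for closed operators affiliated with $\cM$ together with Kaplansky's parallelogram law for projections in $\cM$,
$$p \vee q - q \sim p - p \wedge q,$$
where $\sim$ denotes Murray--von Neumann equivalence. Taking traces yields $\tr(p \vee q) + \tr(p \wedge q) = \tr(p) + \tr(q)$; this identity is the correct substitute for naive subtraction in the semi-finite setting, where $\tr(\id)$ may be infinite.

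Part (a) is really a statement about monotone real functions. By construction $\mu_t(x)$ is the right-continuous generalized inverse of the right-continuous, decreasing function $v \mapsto d(v;x)$, so the standard inversion identity gives
$$\{t \geq 0 : \mu_t(x) > v\} = [0, d(v;x)).$$
Its Lebesgue measure equals $d(v;x)$, and by definition it also equals $d(v;\mu(x))$.

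For (b) I would set $p = e^{|x|}([0,v])$ and $q = e^{|y|}([0,w])$. By spectral calculus $\|xp\|_\infty \leq v$ and $\|yq\|_\infty \leq w$, so the operator $(x+y)(p \wedge q) = xp(p \wedge q) + yq(p \wedge q)$ is bounded with norm at most $v+w$. Writing $f = e^{|x+y|}((v+w,\infty))$, the crucial step is to show $f \wedge (p \wedge q) = 0$: a nonzero $\xi$ in the range of this meet would satisfy $\|(x+y)\xi\| \leq (v+w)\|\xi\|$, while its spectral measure under $|x+y|$ lives in $(v+w,\infty)$, which forces $\||x+y|\xi\|^2 > (v+w)^2\|\xi\|^2$---a contradiction, since $\|(x+y)\xi\| = \||x+y|\xi\|$. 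Kaplansky's law then gives $f \sim f \vee (p \wedge q) - (p \wedge q) \leq p^\perp \vee q^\perp$, and taking traces,
$$d(v+w;x+y) = \tr(f) \leq \tr(p^\perp) + \tr(q^\perp) = d(v;x) + d(w;y).$$

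Part (c) follows the same template. With $p = e^{|x|}((v,\infty))$ and $q = e^{|y|}((v,\infty))$ I would aim to show $p \wedge q^\perp = 0$. For $\xi$ in the range of this meet, $\xi \in q^\perp H \subseteq D(|y|)$ gives $\langle|y|\xi,\xi\rangle \leq v\|\xi\|^2$, and $|x| \leq |y|$ (interpreted via the associated quadratic forms, using $|x|^{1/2} \leq |y|^{1/2}$ from operator monotonicity of the square root) then forces $\langle|x|\xi,\xi\rangle \leq v\|\xi\|^2$. On the other hand, the spectral measure of $|x|$ at $\xi$ is concentrated on $(v,\infty)$, so the strict inequality $\langle|x|\xi,\xi\rangle > v\|\xi\|^2$ holds whenever $\xi \neq 0$, a contradiction. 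Kaplansky then yields $p \sim p \vee q^\perp - q^\perp \leq q$, hence $d(v;x) = \tr(p) \leq \tr(q) = d(v;y)$. The principal obstacle throughout is that trace subtraction is not legal in the semi-finite setting; the Murray--von Neumann formulation of Kaplansky's law is what allows every step above to go through uniformly, without any finiteness assumption on $\tr(\id)$.
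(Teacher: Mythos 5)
Your argument is correct, and since the paper offers no proof of this proposition (it only points to the proof of Theorem 1 in \cite{Nel74}), the right benchmark is that cited argument: your route via spectral projections, the ``meet is zero'' trick for $e^{|z|}(u,\infty)\wedge e=0$ when $\|ze\|_\infty\le u$, and the Kaplansky parallelogram law is precisely the standard proof found in \cite{Nel74} and \cite{FaK86}, so it is essentially the same approach. The only points you compress --- that a vector in the meet lies in the relevant operator domains (e.g.\ $(p\wedge q)H\subseteq D(x)\cap D(y)\subseteq D(\overline{x+y})=D(|x+y|)$ in (b)), and that the ordering $|x|\le|y|$ in $S(\tau)_h$ is equivalent to the quadratic-form inequality with $D(|y|^{1/2})\subseteq D(|x|^{1/2})$ in (c) --- are standard facts for $\tau$-measurable operators, and you flag them appropriately.
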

\noindent The following properties of decreasing rearrangements can be found in \cite{FaK86}. If $p$ is a projection in $\cM$, then we let $p^{\perp}:=\id - p$ denote its orthogonal complement.
\begin{proposition}
\label{pro:DRProperties}
If $x,y \in S(\tr)$, then:
\begin{enumerate}
\renewcommand{\labelenumi}{(\alph{enumi})}
\item $\mu_t(\lambda x) = |\lambda| \mu_t(x)$ for all $\lambda \in \C$ and $t\geq 0$;
\item $\mu_{s+t}(x+y) \leq \mu_s(x) + \mu_t(y)$ for all $s,t\geq 0$;
\item if $|x|\leq|y|$ then $\mu_t(x) \leq \mu_t(y)$ for all $t\geq 0$;
\item $\mu_t(uxv) \leq \|u\| \ \mu_t(x) \ \|y\|$ for all $u,v\in \cM$ and $t\geq 0$;
\end{enumerate}
If $e=e^{|x|}(v,\infty)$, then
\begin{enumerate}
\renewcommand{\labelenumi}{(\alph{enumi})}
\addtocounter{enumi}{4}
\item $\mu_t(|x|e) = \mu_t(x)\chi_{[0,\tr(e))}(t)$ for all $t\geq 0$
\item $\mu_t(|x|e^{\perp}) = \mu_{t+\tr(e)}(x)$ for all $t\geq 0$, provided $\tr(e)<\infty$.
\end{enumerate}
Finally, suppose that $\phi:[0,\infty)\rightarrow[0,\infty)$ is an increasing function which is left-continuous on $(0,\infty)$ and satisfies $\phi(0)=0$. If we define
$\phi(\infty):=\lim_{t\rightarrow\infty}\phi(t)$, then
\begin{enumerate}
\addtocounter{enumi}{6}
\renewcommand{\labelenumi}{(\alph{enumi})}
\item $\mu(\phi(|x|)) = \phi(\mu(x))$ on $[0,\infty)$.
\end{enumerate}
\end{proposition}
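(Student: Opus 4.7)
The plan is to reduce each of (a)--(g) to either Proposition~\ref{pro:DistProperties} or to the Borel functional calculus of $|x|$, leveraging the fact that $\mu_t(x) = \inf\{v > 0 : d(v;x) \leq t\}$ exhibits $\mu(x)$ as the right-continuous inverse of the non-increasing function $v \mapsto d(v;x)$. This inversion principle mechanically converts inequalities between distribution functions into inequalities between decreasing rearrangements.

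Items (a), (b), (c) then fall out immediately: (a) from $|\lambda x| = |\lambda|\,|x|$ and the resulting scaling $d(v;\lambda x) = d(v/|\lambda|;x)$; (b) by fixing $\eps > 0$, choosing $v \leq \mu_s(x) + \eps$ and $w \leq \mu_t(y) + \eps$ with $d(v;x) \leq s$ and $d(w;y) \leq t$, applying Proposition~\ref{pro:DistProperties}(b) to get $d(v+w;x+y) \leq s+t$ and hence $\mu_{s+t}(x+y) \leq v+w$, then sending $\eps \downarrow 0$; and (c) directly from Proposition~\ref{pro:DistProperties}(c). For (d), the key step is operator monotonicity of the square root: from $|uxv|^2 = v^*x^*u^*uxv \leq \|u\|^2\,|xv|^2$ one deduces $|uxv| \leq \|u\|\,|xv|$, and applying the same trick to the adjoint yields $\mu_t(xv) = \mu_t(v^*x^*) \leq \|v\|\,\mu_t(x^*) = \|v\|\,\mu_t(x)$; chaining through (c) gives the desired bound.

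The substantive parts are (e), (f) and (g), all of which hinge on the spectral decomposition of $|x|$. For (e) and (f), the projection $e = e^{|x|}(v,\infty)$ commutes with $|x|$, so $|x| = |x|e + |x|e^\perp$ is an orthogonal decomposition whose summands carry disjoint spectral supports contained in $(v,\infty)$ and $[0,v]$ respectively. Direct computation of spectral projections yields $d(s;|x|e) = d(s;|x|)$ for $s \geq v$ and $d(s;|x|e) = \tr(e)$ for $s < v$, which inverts to (e); similarly $d(s;|x|e^\perp) = \max\{d(s;|x|) - \tr(e),\,0\}$ for $s > 0$, which inverts to (f). For (g), left-continuity and monotonicity of $\phi$ together with $\phi(0)=0$ yield the spectral identity $e^{\phi(|x|)}(s,\infty) = e^{|x|}(\tilde\phi(s),\infty)$ with $\tilde\phi(s) = \sup\{u \geq 0 : \phi(u) \leq s\}$; taking traces and inverting gives $\mu(\phi(|x|)) = \phi(\mu(x))$. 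The main obstacle I anticipate is the careful bookkeeping of boundary cases in (e)--(g): tracking right-continuity conventions of $\mu$ versus $d$, and in (g) handling flat intervals of $\phi$ where $\tilde\phi$ jumps. The left-continuity assumption on $\phi$ is precisely what is needed to make the spectral identity above hold on the nose rather than merely modulo a $\tr$-null projection.
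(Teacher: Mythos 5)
The paper does not actually prove this proposition---it is quoted from Fack--Kosaki \cite{FaK86}---and your sketch correctly reproduces the standard arguments from that source: (a)--(d) by inverting the distribution-function inequalities of Proposition~\ref{pro:DistProperties} (together with $\mu_t(x^*)=\mu_t(x)$, used implicitly in (d)), and (e)--(g) by the spectral computations you indicate, where your remark that left-continuity of $\phi$ makes $\{u\geq 0:\phi(u)>s\}=(\tilde\phi(s),\infty)$ hold exactly is precisely the right point. The only blemish is cosmetic: the $\|y\|$ in the statement of (d) is a typo for $\|v\|$, which your argument correctly supplies.
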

\noindent For a symmetric (quasi-)Banach function space $E$ on $\R_+$, we define
$$E(\cM,\tr) := \{x \in S(\tr): \ \|\mu(x)\|_{E}<\infty\}.$$
We usually denote $E(\cM,\tr)$ by $E(\cM)$ for brevity. We call $E(\cM)$ the \textit{noncommutative (quasi-)Banach function space} associated with $E$ and $\cM$. In the quasi-Banach case these space were first considered by Xu in \cite{Xu91}. The following fundamental result is proved in \cite{KaS08}, Theorem 8.11 (see also \cite{DDP93,Xu91} for earlier proofs of this result under additional assumptions).
\begin{theorem}
\label{thm:KaS}
If $E$ is a symmetric (quasi-)Banach function space on $\R_+$ which is $p$-convex for some $0<p<\infty$, then $E(\cM)$ defines a $p$-convex (quasi-)Banach space under the (quasi-)norm $\|x\|_{E(\cM)}:=\|\mu(x)\|_{E}$. The space $E(\cM)$ is continuously embedded in $S(\tr)$ with respect to the measure topology.
\end{theorem}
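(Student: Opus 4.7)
The plan is to proceed in three stages: verify the quasi-norm axioms and continuous embedding into $S(\tr)$; reduce $p$-convexity of $E(\cM)$ to the triangle inequality in a suitable Banach function space $F(\cM)$ via concavification; and finally handle that Banach case together with completeness. The main obstacle will be the triangle inequality for $F(\cM)$ when $F$ is a general symmetric Banach function space, which is the substantive content of \cite{KaS08}.

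For the quasi-norm structure, positivity, definiteness, and homogeneity of $\|x\|_{E(\cM)} = \|\mu(x)\|_E$ follow directly from Proposition~\ref{pro:DRProperties}. The quasi-triangle inequality combines $\mu_{2t}(x+y) \leq \mu_t(x) + \mu_t(y)$ (Proposition~\ref{pro:DRProperties}(b) with $s=t$) with the boundedness of $D_{1/2}$ on $E$ (Lemma~\ref{lem:dilation}) and the symmetry of $E$: the chain $\|x+y\|_{E(\cM)} \leq \|D_{1/2}(\mu(x)+\mu(y))\|_E \leq \|D_{1/2}\|_{E\to E}\|\mu(x)+\mu(y)\|_E$ is then bounded by a constant times $\|x\|_{E(\cM)} + \|y\|_{E(\cM)}$ via the quasi-triangle in $E$. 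For the continuous embedding $E(\cM)\hookrightarrow S(\tr)$, I would first observe that any nonzero $f \in E$ dominates $v\chi_{\{|f|>v\}}$ for some $v>0$, so by symmetry and dilation one obtains $\chi_{(0,\delta]} \in E$ with strictly positive norm for every $\delta>0$. The elementary estimate $\mu_\delta(x)\|\chi_{(0,\delta]}\|_E \leq \|\mu(x)\chi_{(0,\delta]}\|_E \leq \|x\|_{E(\cM)}$ then shows that $\|\cdot\|_{E(\cM)}$-convergence forces $\mu_\delta(\cdot)\to 0$ for each $\delta>0$, i.e., convergence in the measure topology.

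For $p$-convexity I would reduce to the Banach case by concavification. Set $F := E_{(p)}$, which is $1$-convex by hypothesis and hence, after renorming if necessary, a symmetric Banach function space. Applying Proposition~\ref{pro:DRProperties}(g) to $\phi(t) = t^p$ together with the definition of $F$ gives the identity
$$\||x|^p\|_{F(\cM)} = \|\mu(x)^p\|_F = \|\mu(x)\|_E^p = \|x\|_{E(\cM)}^p.$$
For $x_1,\dots,x_n \in E(\cM)$ and $z := (\sum_i |x_i|^p)^{1/p} \in S(\tr)_+$ (so that $z^p = \sum_i |x_i|^p$), the desired $p$-convexity inequality $\|z\|_{E(\cM)}^p \leq C^p \sum_i \|x_i\|_{E(\cM)}^p$ rewrites as
$$\Big\|\sum_{i=1}^n |x_i|^p\Big\|_{F(\cM)} \leq C^p \sum_{i=1}^n \||x_i|^p\|_{F(\cM)},$$
which is the triangle inequality in $F(\cM)$ applied to positive elements, with constant controlled by the $1$-convexity constant of $F$.

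The remaining task, and the main obstacle, is the genuine triangle inequality $\|x+y\|_{F(\cM)} \leq \|x\|_{F(\cM)} + \|y\|_{F(\cM)}$ for a symmetric Banach function space $F$, since the earlier dilation argument only yields a quasi-triangle with constant $\|D_{1/2}\|_F$. When $F$ is fully symmetric this is classical: the Fack-Kosaki submajorization $\mu(x+y) \prec\prec \mu(x) + \mu(y)$ combined with full symmetry gives $\|x+y\|_{F(\cM)} \leq \|\mu(x)+\mu(y)\|_F \leq \|x\|_{F(\cM)} + \|y\|_{F(\cM)}$ directly. The extension to a general symmetric (not fully symmetric) $F$ is the deep content of \cite{KaS08}, obtained via ultrapower and approximate-dilation techniques. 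Once this is secured, completeness of $E(\cM)$ is routine: pass to an equivalent $r$-norm via Theorem~\ref{thm:Aoki-RolewiczOrig}, extract an absolutely $r$-summable subsequence from any Cauchy sequence, use the continuous embedding to obtain convergence in $S(\tr)$, and verify via Proposition~\ref{pro:DRProperties} that the limit belongs to $E(\cM)$ with the expected norm.
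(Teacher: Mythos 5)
The paper gives no proof of Theorem~\ref{thm:KaS} at all: it is quoted verbatim from \cite{KaS08}, Theorem 8.11, with the remark that \cite{DDP93,Xu91} proved it earlier under additional hypotheses. Measured against that, your elementary parts are fine: the quasi-norm axioms via $\mu_{2t}(x+y)\leq\mu_t(x)+\mu_t(y)$ and the boundedness of $D_{1/2}$, the continuous embedding into $S(\tr)$ via $\chi_{(0,\delta]}\in E$, and the reduction of $p$-convexity of $E(\cM)$ to the triangle inequality in $F(\cM)$ with $F=E_{(p)}$ (renormed to a Banach function space) are all correct, and deferring that triangle inequality to \cite{KaS08} is legitimate since the paper itself does nothing more than cite that reference for the whole statement.

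The genuine gap is your claim that completeness is ``routine.'' The last step of your sketch --- verifying that the measure-limit of the partial sums lies in $E(\cM)$ with the expected norm --- requires that a norm-bounded sequence converging in measure has its limit in $E(\cM)$ with controlled norm, i.e.\ closedness of balls of $E(\cM)$ in the measure topology. That is a Fatou-type property which a general symmetric (quasi-)Banach function space need not have; Proposition~\ref{pro:DRProperties} only gives $\mu_t(z)\leq\liminf_M\mu_t(z_M)$, and passing from this pointwise estimate to a norm estimate is exactly where the Fatou property enters (the paper invokes it for precisely this purpose in the proof of Theorem~\ref{thm:DoobMax}, citing \cite{DDP93}, Proposition 5.14). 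The commutative Riesz--Fischer trick, dominating the tail of $\sum_k y_k$ by $\sum_k|y_k|$, is unavailable for operators since $|a+b|\leq|a|+|b|$ fails. This is the very reason \cite{DDP93} and \cite{Xu91} needed extra assumptions (Fatou norm, order continuity), and why completeness in full generality is part of the content of \cite{KaS08} alongside the triangle inequality. So either also defer completeness to \cite{KaS08}, or add a hypothesis such as the Fatou (quasi-)norm, under which your sketch does close.
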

\noindent Using the construction above, we obtain noncommutative versions of many important spaces in analysis, such as $L^p$-spaces, weak $L^p$-spaces, Lorentz spaces and Orlicz spaces. For more details on measurable operators we refer to \cite{DPS11,FaK86,Nel74} and for the theory of noncommutative symmetric spaces to \cite{DDP89,DDP92,DDP93,DPS11,KaS08}.\par
We will now proceed to prove the noncommutative version of Boyd's theorem. We first show that the noncommutative symmetric space $E(\cM)$ is intermediate for the couple $(L^p(\cM),L^q(\cM))$ if $p<p_E\leq q_E<q$, using the following observation.
\begin{lemma}
\label{lem:embed} Let $0<p<q\leq\infty$ and let $E$ be a symmetric quasi-Banach function space $\R_+$ which is $r$-convex for some $0<r<\infty$. If $E(\cM)\subset
L^p(\cM) + L^q(\cM)$, then
$$\|x\|_{L^p(\cM)+L^q(\cM)} \lesssim_{p,q,E} \|x\|_{E(\cM)} \qquad (x \in E(\cM)).$$
\end{lemma}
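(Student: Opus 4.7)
The natural approach is an application of the closed graph theorem in the setting of quasi-Banach (equivalently, $F$-)spaces. The heart of the argument is that both $E(\cM)$ and $L^p(\cM)+L^q(\cM)$ sit continuously inside the Hausdorff topological $*$-algebra $S(\tau)$, so any set-theoretic inclusion between them is automatically a closed map.

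First I would verify that $E(\cM)$ is a complete quasi-Banach space. This is precisely Theorem~\ref{thm:KaS} applied to the $r$-convex symmetric quasi-Banach function space $E$: $E(\cM)$ carries the quasi-norm $\|x\|_{E(\cM)}=\|\mu(x)\|_E$, is $r$-convex, and embeds continuously into $S(\tau)$ with respect to the measure topology. By Aoki--Rolewicz (Theorem~\ref{thm:Aoki-RolewiczOrig}), its quasi-norm is equivalent to an $s$-norm for some $0<s\le 1$, so $E(\cM)$ is an $F$-space.

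Next I would check that $L^p(\cM)+L^q(\cM)$, equipped with the usual quasi-norm
\[
\|x\|_{L^p(\cM)+L^q(\cM)} = \inf\bigl\{\|y\|_{L^p(\cM)}+\|z\|_{L^q(\cM)} : x=y+z\bigr\},
\]
is itself a quasi-Banach space that embeds continuously into $S(\tau)$. Completeness follows from the completeness of $L^p(\cM)$ and $L^q(\cM)$ (both of which are quasi-Banach noncommutative symmetric spaces associated to the $L^p$- and $L^q$-norms), and continuity of the embedding into $S(\tau)$ is inherited from the summands, since convergence in $L^p(\cM)$ or in $L^q(\cM)$ implies convergence in measure.

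Finally, by hypothesis the set-theoretic inclusion $\iota:E(\cM)\to L^p(\cM)+L^q(\cM)$, $\iota(x)=x$, is a well-defined linear map between $F$-spaces. To verify it has closed graph, suppose $x_n\to x$ in $E(\cM)$ and $\iota(x_n)=x_n\to y$ in $L^p(\cM)+L^q(\cM)$. Both convergences, together with the continuous embeddings of each space into $S(\tau)$, force $x_n\to x$ and $x_n\to y$ in measure; since the measure topology is Hausdorff, $x=y=\iota(x)$. The closed graph theorem for $F$-spaces then yields the existence of a constant $c=c_{p,q,E}$ with $\|x\|_{L^p(\cM)+L^q(\cM)}\le c\|x\|_{E(\cM)}$. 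The only conceptual point that requires care is ensuring that $L^p(\cM)+L^q(\cM)$ really is complete under its quasi-norm, so that closed graph is available; this is the one spot where the $r$-convexity hypothesis on $E$, via Theorem~\ref{thm:KaS}, is indispensable on the other side.
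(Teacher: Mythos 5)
Your argument is correct, but it is genuinely different from the one in the paper. You run the closed graph theorem for $F$-spaces: $E(\cM)$ is quasi-Banach and embeds continuously into $S(\tr)$ by Theorem~\ref{thm:KaS} (this is indeed where $r$-convexity enters), $L^p(\cM)+L^q(\cM)$ is complete under the sum quasi-norm and also embeds continuously into $S(\tr)$, and Hausdorffness of the measure topology forces the graph of the inclusion to be closed. All of these steps are sound, though two of them deserve a line of justification rather than an assertion: completeness of the sum space (via Aoki--Rolewicz and the $r$-convergent-series criterion, using that both summands embed in the common Hausdorff space $S(\tr)$), and the fact that convergence in the sum quasi-norm implies convergence in measure (split $x_n-x=y_n+z_n$ with small $L^p$- and $L^q$-pieces). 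Also, your closing sentence misattributes the role of $r$-convexity: it is needed only for $E(\cM)$ via Theorem~\ref{thm:KaS}; the completeness of $L^p(\cM)+L^q(\cM)$ has nothing to do with $E$. The paper instead argues by contradiction with a gliding-hump/order trick: assuming the estimate fails, it selects positive $x_n$ with $\|x_n\|_{E(\cM)}\le 1$ but $\|x_n\|_{L^p(\cM)+L^q(\cM)}>n^{2/s+1}$, forms $x=\sum_n n^{-2/s}x_n$ (convergent in $E(\cM)$ by the Aoki--Rolewicz $s$-norm and completeness), and uses $n^{-2/s}x_n\le x$ together with monotonicity of the $L^p+L^q$ quasi-norm on positive operators to reach a contradiction. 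That route avoids both the closed graph theorem and the completeness of $L^p(\cM)+L^q(\cM)$, at the price of using positivity and order-monotonicity; your route is more automatic and needs no positivity, at the price of verifying completeness of the sum space and invoking the $F$-space closed graph theorem.
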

\begin{proof}
By Theorem~\ref{thm:Aoki-RolewiczOrig}, there exists an equivalent $s$-norm on $E(\cM)$ for some $0<s\leq 1$. Suppose the assertion is not true. Then there exist $x_n \in E(\cM)_+$ such that $\|x_n\|_{E(\cM)}\leq 1$, but $\|x_n\|_{L^p(\cM)+L^q(\cM)} > n^{2/s+1}$ for all $n\geq 1$. By completeness it follows that $\sum_{n\geq 1} n^{-2/s}x_n$ converges in $E(\cM)$ to some $x \in E(\cM)_+$ and since $E(\cM)\subset L^p(\cM) + L^q(\cM)$ we have $x \in (L^p(\cM) + L^q(\cM))+$. But $n^{-2/s}x_n\leq x$ and so $n<n^{-2/s}\|x_n\|_{L^p(\cM) + L^q(\cM)}\leq\|x\|_{L^p(\cM) + L^q(\cM)}$, a contradiction.
\epf
\end{proof}
\begin{lemma}
\label{lem:BoydInclusions} Let $0<p<q\leq\infty$ and let $E$ be a symmetric quasi-Banach function space $\R_+$ which is $r$-convex for some $0<r<\infty$. If $0<p<p_E$ and either $q_E<q<\infty$ or $q=\infty$, then
$$L^p(\cM)\cap L^q(\cM) \subset E(\cM) \subset L^p(\cM) + L^q(\cM),$$
with continuous inclusions.
\end{lemma}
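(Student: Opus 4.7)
The plan is to prove the two inclusions separately. For $L^p(\cM)\cap L^q(\cM) \subset E(\cM)$, Chebyshev gives $\mu_t(x) \leq t^{-1/r}\|x\|_{L^r(\cM)}$ for $r \in \{p,q\}$, whence
$$\mu(x) \leq \|x\|_{L^q(\cM)}\phi_q + \|x\|_{L^p(\cM)}\psi_p$$
pointwise on $\R_+$. It remains to verify $\phi_q,\psi_p \in E$, which I would obtain by applying $\Phi_q$ and $\Psi_p$ (bounded on $E$ by Lemmas~\ref{lem:distEst} and \ref{lem:distEstPsi}, respectively, under the hypotheses $p<p_E$ and $q_E<q$) to $\chi_{(0,1)}$: a direct computation shows $\chi_{(0,1)}\otimes\phi_q$ and $\chi_{(0,1)}\otimes\psi_p$ on $\R_+\ti\R_+$ have the same distribution functions as $\phi_q$ and $\psi_p$ on $\R_+$, so their $E$-norms coincide. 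The Aoki--Rolewicz theorem then converts the pointwise majorization above into the desired continuous inclusion.

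For $E(\cM) \subset L^p(\cM) + L^q(\cM)$, set $\alpha = \mu_1(x)$, $e = e^{|x|}(\alpha,\infty)$, and decompose $x = y + z$ with $y = xe$, $z = xe^{\perp}$. Since $e$ commutes with $|x|$, Proposition~\ref{pro:DRProperties}~(e) and (f) yield
$$\mu_t(y) = \mu_t(x)\chi_{[0,\tau(e))}(t), \qquad \mu_t(z) = \mu_{t+\tau(e)}(x)\leq \min\{\mu_t(x),\alpha\},$$
with $\tau(e) = d(\alpha;x) \leq 1$. Picking $p_0 \in (p,p_E)$ and $q_0 \in (q_E,q)$ (or only $p_0 \in (p,p_E)$ if $q=\infty$), the dilation bounds underlying (\ref{eqn:BoydIndexLowerDef}) and its upper-index analogue, applied to $\chi_{(0,1)}$ written as $D_t\chi_{(0,t)}$, give the fundamental-function lower bounds $\phi_E(t) = \|\chi_{(0,t)}\|_E \gtrsim t^{1/p_0}$ on $(0,1]$ and $\phi_E(t) \gtrsim t^{1/q_0}$ on $[1,\infty)$. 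Combined with the elementary majorization $\mu_t(x)\phi_E(t) = \|\mu_t(x)\chi_{(0,t)}\|_E \leq \|x\|_{E(\cM)}$, these force
$$\mu_t(x) \lesssim \|x\|_{E(\cM)}\,t^{-1/p_0} \quad (0<t\leq 1), \qquad \mu_t(x) \lesssim \|x\|_{E(\cM)}\,t^{-1/q_0} \quad (t\geq 1).$$
Consequently $\int_0^{\tau(e)}\mu_t(y)^p\,dt$ is dominated by a constant times $\int_0^1 t^{-p/p_0}\,dt < \infty$, giving $y \in L^p(\cM)$; and splitting $\int_0^{\infty}\mu_t(z)^q\,dt$ at $t=1$, using $\mu_t(z)\leq \alpha$ on $[0,1]$ and the tail decay $\mu_t(x) \lesssim t^{-1/q_0}$ on $[1,\infty)$, one obtains $z \in L^q(\cM)$. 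When $q=\infty$ the tail estimate degenerates to $\|z\|_{L^{\infty}(\cM)} \leq \alpha \lesssim \|x\|_{E(\cM)}$. Lemma~\ref{lem:embed} then upgrades this set-theoretic inclusion to a continuous one.

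The main obstacle is extracting the lower bounds on the fundamental function $\phi_E$ from the Boyd index conditions: this is the quantitative ingredient that buys the strict integrability margin at both endpoints (via $p_0 > p$ and $q_0 < q$), and without it the sharp pointwise decay of $\mu_t(x)$ would not be available. Everything else is routine manipulation of decreasing rearrangements and elementary scalar integrals.
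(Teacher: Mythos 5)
Your argument is correct, but it takes a genuinely different route from the paper's at both inclusions. For $E(\cM)\subset L^p(\cM)+L^q(\cM)$ the paper never extracts pointwise decay of $\mu(x)$: it applies Corollary~\ref{cor:distEstTheta} to conclude $\Theta_{p,q}\mu(x)\in E$, hence $d(v;\Theta_{p,q}\mu(x))<\infty$ for \emph{some} $v>0$, and by Corollary~\ref{cor:distTheta} this finiteness is precisely $v^{-q}\|xe_v\|_{L^q(\cM)}^q+v^{-p}\|xe_v^{\perp}\|_{L^p(\cM)}^p<\infty$ (the same spectral splitting you use, only at an unspecified level $v$ rather than at $\alpha=\mu_1(x)$); continuity is then obtained from Lemma~\ref{lem:embed}. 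You instead derive the decay $\mu_t(x)\lesssim\|x\|_{E(\cM)}t^{-1/p_0}$ on $(0,1]$ and $\mu_t(x)\lesssim\|x\|_{E(\cM)}t^{-1/q_0}$ on $[1,\infty)$ from the dilation estimates defining the Boyd indices together with the fundamental function $\phi_E$, and then integrate; this is more hands-on, but it buys an explicit quantitative bound $\|x\|_{L^p(\cM)+L^q(\cM)}\lesssim\|x\|_{E(\cM)}$, so your final appeal to Lemma~\ref{lem:embed} is in fact superfluous (the paper needs it because its argument is purely qualitative). For the first inclusion the paper simply cites the commutative result together with $(L^p\cap L^q)(\cM)=L^p(\cM)\cap L^q(\cM)$, whereas your Chebyshev argument, with $\theta_{p,q}\in E$ obtained by applying $\Phi_q$ and $\Psi_p$ to $\chi_{(0,1)}$ (and using that $\Phi_{\infty}$ is an isometry when $q=\infty$), gives a self-contained proof with an explicit constant. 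The only facts you use tacitly are standard and available in the paper: $\chi_{(0,t)}\in E$ for every $t>0$ (which follows from Lemma~\ref{lem:dilation} applied to any nonzero element of $E$), and $d(\mu_1(x);x)\leq 1$ by right-continuity of the distribution function.
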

\begin{proof}
If $x \in E(\cM)$, then by Corollary~\ref{cor:distEstTheta} we have $\Theta_{p,q}\mu(x) \in E$ and hence $d(v;\Theta_{p,q}\mu(x))<\infty$ for some $v>0$. If
$e_v=e^{|x|}[0,v]$, then by Proposition~\ref{pro:DRProperties}
$$\|xe_v\|_{L^q(\cM)}^q = \int_{\{\mu(x)\leq v\}} \mu_t(x)^q dt, \qquad \|xe_v^{\perp}\|_{L^p(\cM)}^p = \int_{\{\mu(x)>v\}} \mu_t(x)^p dt.$$
It therefore follows from Corollary~\ref{cor:distTheta} that
\begin{equation*}
v^{-q}\|xe_v\|_{L^q(\cM)}^q + v^{-p}\|xe_v^{\perp}\|_{L^p(\cM)}^p = d(v;\Theta_{p,q}\mu(x))<\infty.
\end{equation*}
Hence $x \in L^p(\cM)+L^{q}(\cM)$. By Lemma~\ref{lem:embed} this implies that $E(\cM)\subset L^p(\cM)+L^{q}(\cM)$ continuously.\par Suppose now that $q=\infty$. Pick
$v>0$ such that $d(v;\Theta_{p,\infty}\mu(x))<\infty$. Then $xe_v \in \cM$ and $xe_v^{\perp} \in L^p(\cM)$ since by Proposition~\ref{pro:DRProperties} and
Corollary~\ref{cor:distTheta},
$$v^{-p}\|xe_v^{\perp}\|_{L^p(\cM)}^p = v^{-p} \int_{\{\mu(x)>v\}} \mu_t(x)^p dt = d(v;\Theta_{p,\infty}\mu(x)).$$
By Lemma~\ref{lem:embed} we conclude that $E(\cM)$ embeds continuously into $L^p(\cM)+\cM$.\par The first inclusion is immediate from the commutative case (see e.g.\ \cite{LiT79}, Proposition 2.b.3), as $(L^p\cap L^q)(\cM) = L^p(\cM) \cap L^q(\cM)$.
\end{proof}
To formulate our main result the following definition is convenient. The notion of subconvexity given below weakens the notion of sublinear operators on spaces of measurable operators introduced by Q.\ Xu (see \cite{Hu09}, where it first appeared in published form).
\begin{definition}
\label{def:subconv}
Let $\cM$ and $\cN$ be von Neumann algebras equipped with normal, semi-finite, faithful traces $\tr$ and $\si$, respectively. Let $D$ be a convex subset of $S(\tr)$. A map $T:D\rightarrow S(\si)_h$ is called \emph{midpoint convex} if
$$T(\tfrac{1}{2}x + \tfrac{1}{2}y) \leq \tfrac{1}{2}T(x) + \tfrac{1}{2}T(y)$$
for all $x,y \in D$. A map $U:D\rightarrow S(\si)$ is called \emph{midpoint subconvex} if for every $x,y \in D$ there exist partial isometries $u,v \in \cN$ such that
$$|U(\tfrac{1}{2}x+\tfrac{1}{2}y)|\leq \tfrac{1}{2}u^*|Ux|u + \tfrac{1}{2}v^*|Uy|v.$$
\end{definition}
\noindent It is a well-known fact (see e.g. \cite{FaK86}, Lemma 4.3) that for any $x,y \in S(\si)$ there are partial isometries $u,v \in \cN$ such that
$$|x+y| \leq u^*|x|u + v^*|y|v.$$
Therefore, any linear map is (midpoint) subconvex.\par
For further reference we state Chebyshev's inequality.
\index{inequality!Chebyshev}
\begin{lemma}
\label{lem:Cheb} (Chebyshev\textquoteright s inequality) Let $0<q<\infty$. If $x \in L^q(\cM)$, then
$$d(v;x) \leq v^{-q}\|x\|_{L^q(\cM)}^q \qquad (v>0).$$
\end{lemma}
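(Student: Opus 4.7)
The plan is to mimic the classical Chebyshev argument using spectral projections in place of level sets. Let $e = e^{|x|}(v, \infty)$ denote the spectral projection of $|x|$ corresponding to the interval $(v, \infty)$, so that by definition of the distribution function we have $d(v;x) = \tr(e)$.

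First, I will appeal to the Borel functional calculus to observe that $|x|^q e \geq v^q e$, since the function $s \mapsto s^q$ dominates $v^q$ on $(v, \infty)$. Because $|x|^q$ and $e$ commute and $e$ is a projection, this yields the operator inequality $|x|^q \geq v^q e$ in $S(\tr)_+$. Applying the trace $\tr$, which is monotone on $S(\tr)_+$, then gives
$$\|x\|_{L^q(\cM)}^q \;=\; \tr(|x|^q) \;\geq\; v^q \, \tr(e) \;=\; v^q \, d(v;x),$$
and rearranging produces the claimed bound. The case $d(v;x) = \infty$ is handled automatically: the same inequality would force $\|x\|_{L^q(\cM)}^q = \infty$, contradicting $x \in L^q(\cM)$.

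An equivalent second route proceeds via Proposition~\ref{pro:DRProperties} and the identity $\|x\|_{L^q(\cM)}^q = \int_0^\infty \mu_t(x)^q\, dt$. Since $\mu_t(x)$ is defined as a right-continuous inverse of $t \mapsto d(t;x)$, one has $\mu_t(x) \geq v$ for every $0 \leq t < d(v;x)$, so truncating the integral to this subinterval gives the lower bound $v^q \, d(v;x)$. I expect no genuine obstacle here; the entire content is that the spectral-projection definition of $d(v;x)$ behaves under the trace exactly as level-set measure behaves under Lebesgue integration in the classical proof.
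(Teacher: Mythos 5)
Your argument is correct. Note that the paper itself offers no proof of Lemma~\ref{lem:Cheb}: it is recorded purely ``for further reference'' as a standard fact, so there is nothing to compare against line by line. Both routes you sketch are sound and are really the same computation in two languages. The spectral route works because, with $e=e^{|x|}(v,\infty)$, functional calculus gives $e|x|^q e\geq v^q e$ and hence $\tr(|x|^q)\geq \tr(e|x|^q e)\geq v^q\tr(e)=v^q\,d(v;x)$, with monotonicity of the (possibly infinite-valued) trace on positives handling the degenerate case automatically. The rearrangement route uses $\|x\|_{L^q(\cM)}^q=\int_0^\infty \mu_t(x)^q\,dt$ together with the elementary fact that $\mu_t(x)\geq v$ for $0\leq t<d(v;x)$ (if $\mu_t(x)<v$ there would be $w<v$ with $d(w;x)\leq t$, contradicting $t<d(v;x)\leq d(w;x)$), which is exactly the commutative Chebyshev inequality applied to $\mu(x)$ via the identity $d(v;x)=d(v;\mu(x))$ from Proposition~\ref{pro:DistProperties}(a). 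Either write-up would be an acceptable proof of the lemma.
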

\noindent For any $0<r<\infty$,
\begin{equation}
\label{eqn:LrinftyDist}
\|x\|_{L^{r,\infty}(\cM)} = \sup_{t>0} t^{\frac{1}{r}} \mu_t(x) = \sup_{v>0} v \ d(v;x)^{\frac{1}{r}},
\end{equation}
so Chebyshev's inequality implies that $L^r(\cM)\subset L^{r,\infty}(\cM)$ contractively.
\begin{theorem}
\label{thm:BoydPos} Let $E$ be a symmetric quasi-Banach function space on $\R_+$ which is $s$-convex for some $0<s<\infty$. Let $\cM,\cN$ be von Neumann algebras
equipped with normal, semi-finite, faithful traces $\tr$ and $\si$, respectively. Suppose that $0<p<q\leq\infty$ and let $T:L^{p}(\cM)_+ + L^{q}(\cM)_+\rightarrow
S(\si)$ be a midpoint subconvex map such that for some constants $C_p, C_q>0$ depending only on $p$ and $q$, respectively,
\begin{equation}
\label{eqn:BoydPosAss}
\|Tx\|_{L^{r,\infty}(\cN)} \leq C_r \|x\|_{L^r(\cM)} \qquad (x \in L^r(\cM)_+, \ r=p,q).
\end{equation}
If $p<p_E\leq q_E<q<\infty$ or $p<p_E$ and $q=\infty$, then
$$\|Tx\|_{E(\cN)} \leq 2\|\Theta_{p,q}\|\max\{C_p, C_q\} \ \|x\|_{E(\cM)} \qquad (x \in E(\cM)_+).$$
The same result holds if $T:L^{p}(\cM)_+ + L^{q}(\cM)_+\rightarrow S(\si)_h$ is a midpoint convex map satisfying (\ref{eqn:BoydPosAss}).
\end{theorem}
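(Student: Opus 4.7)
The plan is to mirror the proof of the commutative Boyd theorem (Theorem~\ref{thm:BoydCom}) and reduce the result to the scalar Lemma~\ref{lem:keyDI}. Concretely, for each $x \in E(\cM)_+$ I would establish the noncommutative distribution inequality
$$d(2 C_{p,q} v;\, Tx) \leq d(v;\, \Theta_{p,q}\mu(x)) \qquad (v > 0),$$
where $C_{p,q} := \max\{C_p, C_q\}$. Since $d(\cdot;Tx) = d(\cdot;\mu(Tx))$ by Proposition~\ref{pro:DistProperties}(a), Lemma~\ref{lem:keyDI} applied with $\alpha = 2C_{p,q}$, $g = \mu(Tx)$ and $f = \mu(x)$ then delivers the desired bound, using that $\|y\|_{E(\cN)} = \|\mu(y)\|_E$.

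To prove the displayed distribution inequality I would fix $v > 0$ (assuming $d(v;\Theta_{p,q}\mu(x)) < \infty$, as the inequality is otherwise trivial) and perform the spectral cut-off $e := e^{x}(v,\infty)$, setting $x_1 := xe$ and $x_0 := xe^{\perp}$, both positive and summing to $x$. Proposition~\ref{pro:DRProperties}(e),(f) combined with Corollary~\ref{cor:distTheta} identify
$$\|x_0\|_{L^q(\cM)}^q = \int_{\{\mu(x) \leq v\}} \mu_t(x)^q\, dt, \qquad \|x_1\|_{L^p(\cM)}^p = \int_{\{\mu(x) > v\}} \mu_t(x)^p\, dt,$$
both finite, so that $x_0 \in L^q(\cM)_+$ and $x_1 \in L^p(\cM)_+$. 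Writing $x = \tfrac{1}{2}(2x_0) + \tfrac{1}{2}(2x_1)$ and invoking midpoint subconvexity of $T$ produces partial isometries $u_0, u_1 \in \cN$ with
$$|Tx| \leq \tfrac{1}{2} u_0^*|T(2x_0)|u_0 + \tfrac{1}{2} u_1^*|T(2x_1)|u_1.$$
Propositions~\ref{pro:DistProperties}(b),(c) and~\ref{pro:DRProperties}(d) then absorb the factors $\tfrac{1}{2}$ and the partial isometries to yield $d(s;Tx) \leq d(s;T(2x_0)) + d(s;T(2x_1))$ for every $s > 0$. Specializing to $s = 2 C_{p,q} v$ and applying the weak-type hypothesis (\ref{eqn:BoydPosAss}) via (\ref{eqn:LrinftyDist}) gives $d(2C_{p,q}v; T(2x_r)) \leq v^{-r}\|x_r\|_{L^r(\cM)}^r$ for $r = p, q$, and summing closes the loop.

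The $q = \infty$ case is handled analogously, with the $L^{\infty}$-bound $\|T(2x_0)\|_{L^{\infty}(\cN)} \leq 2C_{\infty} v$ forcing $d(2C_{p,\infty}v;T(2x_0)) = 0$ so that only the $L^p$-estimate remains. The midpoint convex alternative is simpler still, since the self-adjoint inequality $Tx \leq \tfrac{1}{2} T(2x_0) + \tfrac{1}{2} T(2x_1)$ replaces the subconvexity step. The principal subtlety I anticipate is not any individual computation but the careful bookkeeping of constants and scalings: reconciling the factor $\tfrac{1}{2}$ introduced by midpoint subconvexity with the $2$ appearing inside $T(2x_0), T(2x_1)$ so that the final constant $\alpha = 2\max\{C_p,C_q\}$ emerges cleanly, and making sure the noncommutative spectral pieces land in exactly the $L^p$ and $L^q$ spaces dictated by Corollary~\ref{cor:distTheta}.
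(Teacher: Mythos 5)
Your proposal is correct and follows essentially the same route as the paper: the same spectral cut $x = xe^{x}[0,v] + xe^{x}(v,\infty)$, midpoint (sub)convexity to split the distribution function, the weak-type hypothesis via (\ref{eqn:LrinftyDist}) and Corollary~\ref{cor:distTheta} to reach $d(2\max\{C_p,C_q\}v;Tx)\leq d(v;\Theta_{p,q}\mu(x))$, and Lemma~\ref{lem:keyDI} to conclude. The only differences are cosmetic (the paper normalizes $\max\{C_p,C_q\}\leq 1$ and cites Lemma~\ref{lem:BoydInclusions} for well-definedness of $T$ on $E(\cM)_+$, which your in-proof verification that the two spectral pieces lie in $L^q$ and $L^p$ reproduces).
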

\begin{remark}
Property (\ref{eqn:BoydPosAss}) is clearly the noncommutative version of Marcinkiewicz weak type $(r,r)$. The reader should be warned, however, that in the noncommutative literature it is nowadays customary to simply refer to this property as `weak type $(r,r)$'.
\end{remark}
\begin{proof}
We may assume that $\max\{C_p,C_q\}\leq 1$. By Lemma~\ref{lem:BoydInclusions} $T$ is well-defined on $E(\cM)_+$. Let $x \in E(\cM)_+$ and let $e_v = e^{x}[0,v]$. By midpoint subconvexity, there exist partial isometries $u_1,u_2 \in \cN$ such that
$|Tx| \leq \tfrac{1}{2}u_1^*|T(2xe_v)|u_1 + \tfrac{1}{2}u_2^*|T(2xe_v^{\perp})|u_2.$
It follows that
\begin{eqnarray}
\label{eqn:splitBoyd}
d(2v;Tx) & \leq & d(v;\tfrac{1}{2}u_1^*|T(2xe_v)|u_1) + d(v;\tfrac{1}{2}u_2^*|T(2xe_v^{\perp})|u_2) \nonumber\\
& \leq & d(2v;T(2xe_v)) + d(2v;T(2xe_v^{\perp})).
\end{eqnarray}
Suppose first that $q_E<q<\infty$. By (\ref{eqn:LrinftyDist}) and (\ref{eqn:BoydPosAss}) we have
\begin{equation*}
d(v;Ty) \leq v^{-r} C_r^r\|y\|_{L^r(\cM)}^r \qquad (v>0, \ y \in L^r(\cM)_+, \ r=p,q).
\end{equation*}
Therefore,
$$d(2v;Tx) \leq \max\{C_q^q, C_p^p\} \Big((2v)^{-q} \|2xe_v\|_{L^q(\cM)}^q + (2v)^{-p} \|2xe_v^{\perp}\|_{L^p(\cM)}^p\Big)$$
and from Proposition~\ref{pro:DRProperties} it follows that
$$\|xe_v\|_{L^q(\cM)}^q = \int_{\{\mu(x)\leq v\}} \mu_t(x)^q dt, \qquad \|xe_v^{\perp}\|_{L^p(\cM)}^p = \int_{\{\mu(x)>v\}} \mu_t(x)^p dt.$$
Therefore, by Corollary~\ref{cor:distTheta},
\begin{eqnarray*}
d(2v;Tx) & \leq & v^{-q} \int_{\{\mu(x)\leq v\}} \mu_t(x)^q dt + v^{-p} \int_{\{\mu(x)>v\}} \mu_t(x)^p dt \\
& = & d(v;\Theta_{p,q}\mu(x)).
\end{eqnarray*}
The result now follows from Lemma~\ref{lem:keyDI}, using that $d(v;Tx)=d(v;\mu(Tx))$.\par Suppose now that $q=\infty$. Then
$$\|\tfrac{1}{2}u_1^*T(2xe_v)u_1\|_{L^{\infty}(\cN)} \leq C_{\infty}\|xe_v\|_{L^{\infty}(\cM)} \leq v,$$
so $d(v;\tfrac{1}{2}u_1^*T(xe_v)u_1)=0$. By (\ref{eqn:LrinftyDist}) and (\ref{eqn:BoydPosAss}) we have
\begin{equation*}
d(v;Ty) \leq v^{-p} C^p\|y\|_{L^p(\cM)}^p \qquad (v>0, \ y \in L^p(\cM)_+),
\end{equation*}
and therefore (\ref{eqn:splitBoyd}) implies that
\begin{eqnarray*}
d(2v;Tx) & \leq & C_p^p (2v)^{-p} \|2xe_v^{\perp}\|_{L^p(\cM)}^p \\
& \leq & v^{-p} \int_{\{\mu(x)>v\}} \mu_t(x)^p dt = d(v;\Theta_{p,\infty}\mu(x)).
\end{eqnarray*}
Lemma~\ref{lem:keyDI} gives the conclusion. \epf
\end{proof}
It is clear from the proof of Theorem~\ref{thm:BoydPos} that the same result holds for midpoint (sub)convex operators on $L^p(\cM)_h + L^q(\cM)_h$ or $L^p(\cM)+L^q(\cM)$
instead of $L^p(\cM)_+ + L^q(\cM)_+$.\par The original version of Boyd's theorem allows for the interpolation of operators of weak-type $(p,p)$, i.e., which are bounded
from $L^{p,1}$ into $L^{p,\infty}$. Theorem~\ref{thm:BoydPos} only applies for Marcinkiewicz weak type $(p,p)$ operators. In Theorem~\ref{thm:BoydOrigApp} in the
appendix we will show how to obtain a full noncommutative analogue of Boyd's theorem using a different approach.

\subsection{Interpolation of noncommutative probabilistic inequalities}

To illustrate the flexibility of the method used to prove Theorem~\ref{thm:BoydPos}, we modify it to interpolate several noncommutative probabilistic inequalities. In particular we prove the dual version of Doob's maximal inequality in noncommutative symmetric spaces, see Corollary~\ref{cor:dualDoobSymm} below. The latter result is a consequence of Theorem~\ref{thm:l1Int}, which we will interpret in the following section as an interpolation result for operators on noncommutative $l^1$-valued symmetric spaces. For its proof, we shall need the following observation.
\begin{lemma}
\label{lem:projDom}
Let $x \in S(\tr)_+$. If $e$ is a projection in $\cM$, then
$$x \leq 2(exe + e^{\perp}xe^{\perp}).$$
\end{lemma}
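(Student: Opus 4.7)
The plan is to use the fact that $e - e^\perp$ is a self-adjoint symmetry in $\cM$, and that conjugation of a positive operator by a self-adjoint element preserves positivity. Since $\id = e + e^\perp$ with $e, e^\perp$ orthogonal projections, and since $S(\tr)$ is a $*$-algebra containing $\cM$ (so all products below make sense), one can expand
$$x = (e + e^\perp)x(e + e^\perp) = exe + exe^\perp + e^\perp xe + e^\perp xe^\perp.$$
Subtracting this from $2(exe + e^\perp xe^\perp)$ gives
$$2(exe + e^\perp xe^\perp) - x = exe - exe^\perp - e^\perp xe + e^\perp xe^\perp = (e - e^\perp)\,x\,(e - e^\perp),$$
so the whole assertion reduces to showing that this last expression lies in $S(\tr)_+$.

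For the positivity step, I would use that $x \in S(\tr)_+$ admits a positive square root $x^{1/2} \in S(\tr)_+$ (the spectral square root, which is $\tr$-measurable since $x$ is). Writing $s := e - e^\perp \in \cM$, which is self-adjoint, we have
$$s x s = s\, x^{1/2}\, x^{1/2}\, s = (x^{1/2} s)^*(x^{1/2} s),$$
and any element of the form $y^* y$ with $y \in S(\tr)$ belongs to $S(\tr)_+$. Hence $(e-e^\perp)x(e-e^\perp) \geq 0$, which yields the inequality.

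The only genuine subtlety is making sure the algebraic manipulation is legitimate for a possibly unbounded $x$. This is handled by working inside $S(\tr)$: since $e \in \cM$ is bounded and $S(\tr)$ is a $*$-algebra under strong operations, all products $exe$, $exe^\perp$, etc., are well-defined elements of $S(\tr)$, and the expansion $x = (e+e^\perp)x(e+e^\perp)$ is just multiplication by $\id$ on both sides. So there is no real obstacle beyond invoking the algebraic structure of $S(\tr)$ together with the observation that $s^* x s \geq 0$ whenever $s \in \cM$ and $x \in S(\tr)_+$.
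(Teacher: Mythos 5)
Your argument is correct and coincides with the paper's own proof: the paper likewise expands $x$ over $e$ and $e^{\perp}$, reduces the claim to the positivity of $exe - e^{\perp}xe - exe^{\perp} + e^{\perp}xe^{\perp}$, and writes this as $(x^{\frac{1}{2}}e - x^{\frac{1}{2}}e^{\perp})^*(x^{\frac{1}{2}}e - x^{\frac{1}{2}}e^{\perp})$, which is exactly your $(x^{1/2}s)^*(x^{1/2}s)$ with $s=e-e^{\perp}$. Your extra remarks about working in the $*$-algebra $S(\tr)$ are fine but add nothing essentially different.
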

\begin{proof}
By writing
$$x = exe + e^{\perp}xe + exe^{\perp} + e^{\perp}xe^{\perp},$$
we see that the asserted inequality is equivalent to
$$exe - e^{\perp}xe - exe^{\perp} + e^{\perp}xe^{\perp}\geq 0.$$
But $x\geq 0$, so
$$exe - e^{\perp}xe - exe^{\perp} + e^{\perp}xe^{\perp} = (x^{\frac{1}{2}}e - x^{\frac{1}{2}}e^{\perp})^*(x^{\frac{1}{2}}e - x^{\frac{1}{2}}e^{\perp}) \geq 0$$
and the result follows.
\epf
\end{proof}
\begin{theorem}
\label{thm:l1Int} Let $E$ be a symmetric quasi-Banach function space
on $\R_+$ which is $s$-convex for some $0<s<\infty$. Let $\cM,\cN$
be von Neumann algebras equipped with normal, semi-finite, faithful
traces $\tr$ and $\si$, respectively. Suppose that $0<p<q\leq\infty$
and for every $k\geq 1$ let $T_k:L^{p}(\cM)_+ +
L^{q}(\cM)_+\rightarrow S(\si)_+$ be positive midpoint convex maps
such that for some constants $C_p, C_q>0$ depending only on $p$ and
$q$, respectively,
\begin{equation}
\label{eqn:BoydPosAssl1} \Big\|\sum_{k\geq 1} T_k(x_k)\Big\|_{L^{r,\infty}(\cN)} \leq C_r \Big\|\sum_{k\geq 1} x_k\Big\|_{L^r(\cM)} \ \ \ (x_k \in L^r(\cM)_+, k\geq 1, \
r=p,q).
\end{equation}
If $p<p_E\leq q_E<q<\infty$ or $p<p_E$ and $q=\infty$, then for any sequence $(x_k)_{k\geq 1}$ in $E(\cM)_+$,
\begin{equation}
\label{eqn:l1Int} \Big\|\sum_{k\geq 1} T_k(x_k)\Big\|_{E(\cN)} \leq 4\|\Theta_{p,q}\| \ \max\{C_p, C_q\} \ \Big\|\sum_{k\geq 1} x_k\Big\|_{E(\cM)},
\end{equation}
where the sums converge in norm.
\end{theorem}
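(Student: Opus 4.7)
The plan is to mimic the proof of Theorem~\ref{thm:BoydPos}, but to perform a single spectral cut determined by $x := \sum_{k\ge 1} x_k$ rather than separate cuts on each $x_k$: only the global cut produces $L^q$- and $L^p$-bounds keyed to the tails of $\mu(x)$. After rescaling we may assume $\max\{C_p,C_q\}\le 1$. Assume for the moment that $\sum_k x_k$ converges to some $x\in E(\cM)_+$; norm convergence in (\ref{eqn:l1Int}) will be extracted at the end by applying the inequality to tail sums and invoking completeness of $E(\cN)$, which is a quasi-Banach space via $s$-convexity and Aoki--Rolewicz.

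Fix $v>0$ and set $e:=e^{x}[0,v]$, so that $x$ commutes with $e$. For each $k$, Lemma~\ref{lem:projDom} gives $x_k \le 2(e x_k e + e^{\perp}x_ke^{\perp})$. Combining the monotonicity of $T_k$ (see the remark in the next paragraph) with midpoint convexity,
\[
T_k(x_k) \le T_k\bigl(2ex_ke + 2e^{\perp}x_ke^{\perp}\bigr) \le \tfrac{1}{2}T_k(4ex_ke) + \tfrac{1}{2}T_k(4e^{\perp}x_ke^{\perp}).
\]
Summing in $k$ and using Proposition~\ref{pro:DistProperties}(b),
\[
d\bigl(w;\textstyle\sum_{k}T_k(x_k)\bigr) \le d\bigl(w;\sum_k T_k(4ex_ke)\bigr) + d\bigl(w;\sum_k T_k(4e^{\perp}x_ke^{\perp})\bigr) \qquad (w>0).
\]
By the weak-type hypothesis (\ref{eqn:BoydPosAssl1}) combined with (\ref{eqn:LrinftyDist}), and using $\sum_k 4ex_ke = 4exe$ and $\sum_k 4e^{\perp}x_ke^{\perp} = 4e^{\perp}xe^{\perp}$, the right-hand side is bounded by $w^{-q}(4C_q)^q\|exe\|_{L^q(\cM)}^q + w^{-p}(4C_p)^p\|e^{\perp}xe^{\perp}\|_{L^p(\cM)}^p$. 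Since $x$ and $e$ commute, Proposition~\ref{pro:DRProperties}(e),(f) identifies these $L^q$- and $L^p$-norms with $\int_{\{\mu(x)\le v\}}\mu_s(x)^q\,ds$ and $\int_{\{\mu(x)>v\}}\mu_s(x)^p\,ds$ respectively. Setting $w = 4\max\{C_p,C_q\}v$ and invoking Corollary~\ref{cor:distTheta} yields
\[
d\bigl(4\max\{C_p,C_q\}v;\textstyle\sum_k T_k(x_k)\bigr) \le d(v;\Theta_{p,q}\mu(x)),
\]
and Lemma~\ref{lem:keyDI} delivers the bound in (\ref{eqn:l1Int}). When $q=\infty$, the low-spectrum piece satisfies $\|\tfrac{1}{2}\sum_k T_k(4exe)\|_{\infty}\le 2C_{\infty}v$, so its distribution vanishes beyond $2C_{\infty}v$ and only the $L^p$-piece contributes; choosing $w=2(C_{\infty}+C_p)v\le 4\max\{C_p,C_{\infty}\}v$ recovers the same final constant.

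The principal obstacle is justifying $T_k(x_k) \le T_k(2ex_ke + 2e^{\perp}x_ke^{\perp})$: midpoint convexity together with positivity of the range does not on its own imply order preservation (as the scalar example $t\mapsto(t-1)^2$ on $\R_+$ shows). In all intended applications, however, the $T_k$'s are positive \emph{linear} maps---notably conditional expectations, as used in Corollary~\ref{cor:dualDoobSymm}---for which monotonicity is automatic. Convergence of $\sum_k T_k(x_k)$ in $E(\cN)$ finally follows by applying the established inequality to the partial tail $\sum_{k=N}^{M}T_k(x_k)$, whose $E(\cN)$-quasinorm is controlled by $\|\sum_{k=N}^{M}x_k\|_{E(\cM)}$; the latter is Cauchy by hypothesis, so completeness of $E(\cN)$ closes the argument.
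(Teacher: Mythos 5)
Your argument is essentially the paper's own proof: cut with the spectral projection $e_v=e^{x}[0,v]$ of the total sum $x=\sum_k x_k$, apply Lemma~\ref{lem:projDom} and midpoint convexity termwise, sum over $k$, use (\ref{eqn:BoydPosAssl1}) together with Corollary~\ref{cor:distTheta} to dominate the distribution function by $d(v;\Theta_{p,q}\mu(x))$, and conclude with Lemma~\ref{lem:keyDI} (with the same treatment of $q=\infty$ and the same reduction to finite/tail sums by completeness). The monotonicity step you flag is not a gap relative to the paper: there the hypothesis that the $T_k$ are \emph{positive} is used precisely as order-preservation in the estimate $T_k(x_k)\leq T_k(2e_vx_ke_v+2e_v^{\perp}x_ke_v^{\perp})$, which is exactly the reading you adopt and which holds in all intended applications such as conditional expectations.
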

\begin{proof}
We may assume $C_p,C_q\leq 1$. Suppose first that $q_E<q<\infty$. By completeness it suffices to prove (\ref{eqn:l1Int}) for a finite sequence $(x_k)$ in $E(\cM)_+$. Set
$x=\sum_k x_k$. For any $v\geq 0$, let $e_v = e^{x}[0,v]$. By Lemma~\ref{lem:projDom} and positivity and convexity of the $T_k$,
\begin{eqnarray*}
\sum_k T_k(x_k) & \leq & \sum_k T_k(2e_vx_ke_v +
2e_v^{\perp}x_ke_v^{\perp}) \\
& \leq & \tfrac{1}{2} \sum_k T_k(4e_vx_ke_v) + \tfrac{1}{2} \sum_k T_k(4e_v^{\perp}x_ke_v^{\perp}).
\end{eqnarray*}
Therefore,
$$d\Big(4v;\sum_k T_k(x_k)\Big) \leq d\Big(4v;\sum_k T_k(4e_vx_ke_v)\Big) + d\Big(4v;\sum_k T_k(4e_v^{\perp}x_ke_v^{\perp})\Big).$$
By (\ref{eqn:BoydPosAssl1}),
\begin{eqnarray*}
d\Big(4v;\sum_k T_k(x_k)\Big)
& \leq & (4v)^{-q} \Big\|\sum_k 4e_vx_ke_v\Big\|_{L^q(\cM)}^q + (4v)^{-p}\Big\|\sum_k 4e_v^{\perp}x_ke_v^{\perp}\Big\|_{L^p(\cM)}^p \\
& = & v^{-q} \int_{\{\mu(x)\leq v\}} \mu_t(x)^q dt + v^{-p} \int_{\{\mu(x)>v\}} \mu_t(x)^p dt \\
& = & d(v;\Theta_{p,q}\mu(x)),
\end{eqnarray*}
where the final equality follows from Corollary~\ref{cor:distTheta}. The result is now immediate from Lemma~\ref{lem:keyDI}. The case $q=\infty$ follows analogously as
in the proof of Theorem~\ref{thm:BoydPos}. \epf
\end{proof}
As an application of Theorem~\ref{thm:l1Int}, we can interpolate the following dual Doob inequality in noncommutative $L^p$-spaces, due to M.\ Junge.
\begin{theorem}
\label{thm:dualDoobLp} \cite{Jun02} Let $\cM$ be a semi-finite von Neumann algebra and let $(\cE_k)_{k\geq 1}$ be an increasing sequence of conditional expectations in
$\cM$. If $1\leq p<\infty$, then for any sequence $(x_k)_{k\geq 1}$ in $L^p(\cM)_+$,
$$\Big\|\sum_k \cE_k(x_k)\Big\|_{L^p(\cM)} \lesssim_p \Big\|\sum_k x_k\Big\|_{L^p(\cM)}.$$
\end{theorem}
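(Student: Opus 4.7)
My plan is to follow Junge's original route, which derives the stated dual Doob inequality from the noncommutative Doob maximal inequality via duality, after disposing of the endpoint $p=1$ by a direct trace argument.

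First I would dispose of the endpoint $p=1$, which is immediate: since each $\cE_k$ is trace-preserving and each $x_k\geq 0$, positivity of $\cE_k$ gives $\cE_k(x_k)\geq 0$ and hence
\begin{equation*}
\Big\|\sum_k \cE_k(x_k)\Big\|_{L^1(\cM)} = \sum_k \tr(\cE_k(x_k)) = \sum_k \tr(x_k) = \Big\|\sum_k x_k\Big\|_{L^1(\cM)}.
\end{equation*}
Thus the constant at $p=1$ may even be taken equal to $1$.

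For $1<p<\infty$ the plan is to dualize. One introduces Pisier's noncommutative vector-valued spaces $L^{p}(\cM;l^1)$ and $L^{p'}(\cM;l^\infty)$, where $p'$ denotes the conjugate exponent. The relevant facts are that for a positive sequence $(x_k)$ in $L^p(\cM)$ the norm in $L^p(\cM;l^1)$ reduces to $\|\sum_k x_k\|_{L^p(\cM)}$, and that under the bracket $\langle (x_k),(y_k)\rangle=\sum_k \tr(x_ky_k)$ one has the isometric identification $L^p(\cM;l^1)^*=L^{p'}(\cM;l^\infty)$. Using the trace-preserving, self-adjoint nature of $\cE_k$, one rewrites
\begin{equation*}
\Big\|\sum_k \cE_k(x_k)\Big\|_{L^p(\cM)} = \sup_{\|y\|_{L^{p'}(\cM)}\le 1}\sum_k \tr(\cE_k(x_k)y) = \sup_{\|y\|_{L^{p'}(\cM)}\le 1}\sum_k \tr(x_k \cE_k(y)),
\end{equation*}
and bounds the last supremum by $\|\sum_k x_k\|_{L^p(\cM)}\cdot \|(\cE_k(y))_k\|_{L^{p'}(\cM;l^\infty)}$, using the duality and positivity of the $x_k$. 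Therefore the stated dual Doob inequality at exponent $p$ follows from the Doob maximal inequality
\begin{equation*}
\|(\cE_k(y))_k\|_{L^{p'}(\cM;l^\infty)}\lesssim_{p'} \|y\|_{L^{p'}(\cM)} \qquad (1<p'\le\infty),
\end{equation*}
applied at the conjugate exponent.

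The actual obstacle, therefore, is the Doob maximal inequality itself, and this is where the hard work lies. I would handle the endpoint $p'=\infty$ via Cuculescu's construction of jointly dominating projections, which after a careful factorization produces an operator $a\in\cM_+$ with $\|a\|_\infty\lesssim \|y\|_\infty$ and contractions $u_k\in\cM$ satisfying $\cE_k(y)=au_ka$; this is exactly the form demanded by the $l^\infty$-norm when $p'=\infty$. One then combines this endpoint with the trivial $p'=2$ case (where the inequality reduces, after squaring, to an orthogonality computation for martingale differences) by real or complex interpolation of noncommutative vector-valued $L^p$-spaces, in the style of Pisier, to obtain the range $2\leq p'<\infty$. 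The remaining range $1<p'<2$ follows by a further duality/interpolation step against the already-established dual Doob inequality for large $p$. This interpolation of $L^p(\cM;l^\infty)$-valued inequalities is the genuinely delicate point: it is precisely what motivates the abstract vector-valued interpolation machinery developed in this paper, and it is also the step where one must be most careful about constants, since the Pisier norm is defined by an infimum over factorizations and is not an obvious interpolation functor.
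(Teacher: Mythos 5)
The first thing to note is that the paper does not prove this statement at all: it is quoted from \cite{Jun02} and used as a black box, together with Theorem~\ref{thm:l1Int}, to deduce Corollary~\ref{cor:dualDoobSymm}. So your sketch has to be judged on its own, and as written it has genuine gaps. Your $p=1$ case is correct (with constant $1$), and the duality step reducing the dual Doob inequality at $p$ to the Doob maximal inequality at $p'$ is sound --- it is essentially the computation in Theorem~\ref{thm:linftyInt} run in the opposite direction, using $L^p(\cM;l^1)^*=L^{p'}(\cM;l^{\infty})$ and the trace-preserving, self-adjoint nature of $\cE_k$. The problem is your proposed proof of the maximal inequality itself. (i) The endpoint $p'=\infty$ is in fact trivial: for $0\leq y$ with $\|y\|_\infty\leq 1$ one has $0\leq \cE_k(y)\leq \id$, so $a=b=\id$ works; Cuculescu's construction is relevant to the weak type $(1,1)$ endpoint, not to $p'=\infty$. (ii) The claim that $p'=2$ is ``trivial'' and reduces ``after squaring to an orthogonality computation for martingale differences'' is false. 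There is no pointwise supremum in the noncommutative setting; $\|\cdot\|_{L^2(\cM;l^\infty)}$ is a factorization norm, and no orthogonality argument produces the required uniform factorization $\cE_k(y)=a u_k a$. The $L^2$ case of noncommutative Doob is not easier than the general case, which is precisely why Junge's proof does not go through it. (iii) Your treatment of $1<p'<2$ is circular: Doob at $p'\in(1,2)$ is, by the very duality you use, equivalent to the dual Doob inequality at the conjugate exponent $p\in(2,\infty)$, which is exactly what remains to be proved at that stage. Finally, the interpolation of $l^\infty$-valued inequalities that you invoke between the endpoints is itself the delicate, unproven ingredient (real/complex interpolation of the spaces $L^p(\cM;l^\infty)$ is nontrivial and is not supplied by anything in your outline).

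It is also worth pointing out that the logical order in \cite{Jun02} is the reverse of yours: Junge proves the dual Doob inequality \emph{directly} (via a Hilbert $C^*$-module/factorization argument combined with complex interpolation; alternative direct proofs expand $\tr\bigl(\bigl(\sum_k\cE_k(x_k)\bigr)^n\bigr)$ for integer exponents and interpolate), and then deduces the maximal inequality by exactly the duality you describe. If you want a self-contained argument for the statement as given, you should prove the dual inequality directly rather than routing it through the maximal inequality, whose known proofs all pass through the dual inequality in the first place.
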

Theorems~\ref{thm:dualDoobLp} and \ref{thm:l1Int} together yield the following extension.
\begin{corollary}
\label{cor:dualDoobSymm} Let $E$ be a symmetric quasi-Banach
function space on $\R_+$ which is $s$-convex for some $0<s<\infty$
and let $\cM$ be a semi-finite von Neumann algebra. Let
$(\cE_k)_{k\geq 1}$ be an increasing sequence of conditional
expectations in $\cM$. If $1<p_E\leq q_E<\infty$, then for any
sequence $(x_k)_{k\geq 1}$ in $E(\cM)_+$,
\begin{equation}
\label{eqn:dualDoobSymm} \Big\|\sum_{k\geq 1}
\cE_k(x_k)\Big\|_{E(\cM)} \lesssim_E \Big\|\sum_{k\geq 1}
x_k\Big\|_{E(\cM)},
\end{equation}
where the sums converge in norm.
\end{corollary}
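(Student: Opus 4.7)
The proof should be a direct application of the $l^1$-valued interpolation theorem (Theorem~\ref{thm:l1Int}) to the sequence of operators $T_k := \cE_k$, using Junge's dual Doob inequality (Theorem~\ref{thm:dualDoobLp}) as the endpoint input. Since $1<p_E\leq q_E<\infty$, the plan is to choose exponents $p,q$ with
$$1<p<p_E\leq q_E<q<\infty$$
and verify the three hypotheses of Theorem~\ref{thm:l1Int} for this choice of $p$ and $q$.

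First I would note that each conditional expectation $\cE_k$ is linear and positive, hence in particular positive and midpoint convex (linearity trivially gives the midpoint convexity inequality with equality), and maps $S(\tr)_+$ to itself, so the domain/codomain hypothesis of Theorem~\ref{thm:l1Int} is satisfied on $L^p(\cM)_+ + L^q(\cM)_+$. Next, to verify the weak-type endpoint bound (\ref{eqn:BoydPosAssl1}), apply Theorem~\ref{thm:dualDoobLp} at $r=p$ and at $r=q$ (both are in $[1,\infty)$ since $p>1$ and $q<\infty$), obtaining
$$\Big\|\sum_{k\geq 1}\cE_k(x_k)\Big\|_{L^r(\cM)}\lesssim_r \Big\|\sum_{k\geq 1}x_k\Big\|_{L^r(\cM)}\qquad (r=p,q).$$
Chebyshev's inequality (Lemma~\ref{lem:Cheb}), recorded in the excerpt as the contractive inclusion $L^r(\cM)\subset L^{r,\infty}(\cM)$ via (\ref{eqn:LrinftyDist}), upgrades the right-hand norm output to $L^{r,\infty}(\cN)$ with $\cN=\cM$, which is exactly (\ref{eqn:BoydPosAssl1}).

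With all hypotheses of Theorem~\ref{thm:l1Int} in place, its conclusion yields precisely (\ref{eqn:dualDoobSymm}), including the norm convergence of both sums, with a constant of the order $4\|\Theta_{p,q}\|_{E\to E}\max\{C_p,C_q\}$; the choice of $p$ and $q$ (and the implicit constants in Junge's inequality at $r=p,q$) depends only on $E$ through its Boyd indices, so the constant is bounded by a quantity depending only on $E$, justifying the notation $\lesssim_E$.

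There is essentially no obstacle beyond the bookkeeping of selecting admissible $p,q$; the one point worth being careful about is that Theorem~\ref{thm:dualDoobLp} requires $1\leq r<\infty$, which is exactly why the hypothesis $1<p_E$ (rather than $0<p_E$) appears in the statement, and correspondingly why we take $p>1$. No $s$-convexity issue arises either, since $s$-convexity of $E$ is already inherited in the hypothesis of Theorem~\ref{thm:l1Int} and is used there only to ensure $E(\cM)$ is a well-defined quasi-Banach space via Theorem~\ref{thm:KaS}.
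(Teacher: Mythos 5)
Your proposal is correct and is exactly the argument the paper intends: the corollary is stated as an immediate consequence of Theorem~\ref{thm:dualDoobLp} combined with Theorem~\ref{thm:l1Int}, applied to $T_k=\cE_k$ with $1<p<p_E\leq q_E<q<\infty$, the weak type hypothesis (\ref{eqn:BoydPosAssl1}) following from Junge's strong $(r,r)$ bounds and the contractive inclusion $L^r(\cM)\subset L^{r,\infty}(\cM)$. Your bookkeeping of the hypotheses (positivity, midpoint convexity of the linear maps $\cE_k$, and the role of $1<p_E$) matches the paper's reasoning, so there is nothing to add.
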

In \cite{JuX03}, Theorem 7.1, it was shown that any conditional expectation $\cE$ is `anti-bounded' for the $L^p$-norm if $0<p<1$, i.e.,
\begin{equation}
\label{eqn:CEantiBdd}
\|x\|_{L^p(\cM)} \leq 2^{\frac{1}{p}} \|\cE(x)\|_{L^p(\cM)} \qquad (x \in \cM).
\end{equation}
Even though (\ref{eqn:CEantiBdd}) does not correspond to the boundedness of an operator, we can still `interpolate' this estimate.
\begin{proposition}
\label{pro:CEantiBdd}
Let $\cM$ be a finite von Neumann algebra and let $\cE$ be a conditional expectation on $\cM$. If $E$ is a symmetric quasi-Banach function space on
$\R_+$ with $q_E<1$, then
$$\|x\|_{E(\cM)} \lesssim_E \|\cE(x)\|_{E(\cM)} \qquad (x \in \cM).$$
\end{proposition}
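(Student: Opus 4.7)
The plan is to derive a Lemma~\ref{lem:keyDI}-style distributional inequality
\[
d(v;x)\le K\,d(v;\Theta_{p,q}(\mu(\cE(x))))
\]
and then read off the $E(\cM)$-bound by passing to right-continuous inverses and absorbing the constant $K$ into a dilation. I work with $x\in\cM_+$, since (\ref{eqn:CEantiBdd}) is a positivity statement. Because $p_E\le q_E<1$, I may pick exponents $0<p<p_E$ and $q_E<q<1$; note $p<q<1$, which is precisely what is needed for (\ref{eqn:CEantiBdd}) to apply at both exponents.

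The essential departure from the proof of Theorem~\ref{thm:BoydPos} is that the splitting of $x$ must be made relative to a spectral projection of $\cE(x)$, not of $x$ itself. Write $g=\mu(\cE(x))$ and set $f_v=e^{\cE(x)}[0,v]$, which lies in the range subalgebra $\cN:=\cE(\cM)$. Lemma~\ref{lem:projDom} gives $x\le 2(f_v xf_v+f_v^\perp xf_v^\perp)$, and the $\cN$-bimodule property of $\cE$ yields $\cE(f_v xf_v)=f_v\cE(x)$ and $\cE(f_v^\perp xf_v^\perp)=f_v^\perp\cE(x)$. Proposition~\ref{pro:DRProperties}(e),(f) identifies the relevant norms as
\[
\|f_v\cE(x)\|_{L^q(\cM)}^q=\int_{\{g\le v\}}g(t)^q\,dt,\qquad \|f_v^\perp\cE(x)\|_{L^p(\cM)}^p=\int_{\{g>v\}}g(t)^p\,dt.
\]
Applying (\ref{eqn:CEantiBdd}) to the positive operators $f_v xf_v$ at exponent $q$ and to $f_v^\perp xf_v^\perp$ at exponent $p$, and then combining Chebyshev's inequality (Lemma~\ref{lem:Cheb}) with Proposition~\ref{pro:DistProperties}(b),(c) and Corollary~\ref{cor:distTheta}, produces
\[
d(v;x)\le 2\cdot 4^q v^{-q}\int_{\{g\le v\}}g^q+2\cdot 4^p v^{-p}\int_{\{g>v\}}g^p\le K\,d(v;\Theta_{p,q}(g))
\]
with $K=2\cdot 4^q$.

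Passing to right-continuous inverses gives $\mu_t(x)\le\mu_{t/K}(\Theta_{p,q}(g))=(D_{1/K}\mu(\Theta_{p,q}(g)))(t)$, whence
\[
\|x\|_{E(\cM)}\le\|D_{1/K}\|_{E\to E}\,\|\Theta_{p,q}\|_{E\to E}\,\|\cE(x)\|_{E(\cM)};
\]
both operator norms are finite by Lemma~\ref{lem:dilation} and Corollary~\ref{cor:distEstTheta}, exactly because of the choice $p<p_E\le q_E<q$. The main difficulty I expect is spotting this decomposition: a more obvious split via $e^{x}[0,v]$ (as in Theorem~\ref{thm:BoydPos}) would only yield $v^s d(v;x)\le 2\|\cE(x)\|_{L^s(\cM)}^s$ for $s\in(0,1)$, which controls $\|x\|_{E(\cM)}$ merely by $\|\cE(x)\|_{L^s(\cM)}$; since $q_E<s<1$ forces $p_E<s$, the finite-trace embedding $E\hookrightarrow L^s$ fails and the chain cannot be closed. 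Splitting by $e^{\cE(x)}[0,v]$ instead produces integrals of $g$ over $\{g>v\}$ and $\{g\le v\}$ that match the two pieces of $\Theta_{p,q}(g)$ exactly, so that the Boyd-index geometry of $E$ closes the argument.
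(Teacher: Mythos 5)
Your proof is correct and follows essentially the same route as the paper: the paper likewise splits by the spectral projection $e_v=e^{\cE(x)}[0,v]$ of $\cE(x)$ (using the one-sided pieces $xe_v$, $xe_v^{\perp}$ rather than the two-sided compressions from Lemma~\ref{lem:projDom}), applies Chebyshev's inequality and the anti-boundedness (\ref{eqn:CEantiBdd}) at exponents $p<p_E$ and $q_E<q<1$, identifies the resulting bound with $d(v;\Theta_{p,q}\mu(\cE(x)))$ via Corollary~\ref{cor:distTheta}, and concludes with Lemma~\ref{lem:keyDI}. Your only departures are cosmetic: the symmetric splitting places the constant on the measure side, so you absorb it with the dilation $D_{1/K}$ instead of quoting Lemma~\ref{lem:keyDI} verbatim, at the price of a slightly worse constant and with the minor benefit that (\ref{eqn:CEantiBdd}) is only ever applied to positive operators.
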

\begin{proof}
Let $y=\cE(x)$ and for $v>0$ set $e_v = e^{y}[0,v]$. As was remarked after Lemma~\ref{lem:dilation}, we have $p_E>0$ and hence we can fix $0<p<p_E$ and $q_E<q<1$. By Chebyshev's inequality and (\ref{eqn:CEantiBdd}),
\begin{eqnarray*}
d(2^{1+\frac{1}{p}}v;x) & \leq & v^{-q}\|2^{-\frac{1}{p}}xe_v\|_{L^q(\cM)}^q + v^{-p}\|2^{-\frac{1}{p}}xe_v^{\perp}\|_{L^p(\cM)}^p \\
& \leq & v^{-q} \|\cE(xe_v)\|_{L^q(\cM)}^q + v^{-p}\|\cE(xe_v^{\perp})\|_{L^p(\cM)}^p \\
& = & v^{-q} \|\cE(x)e_v\|_{L^q(\cM)}^q + v^{-p}\|\cE(x)e_v^{\perp}\|_{L^p(\cM)}^p \\
& = & v^{-q} \int_{\{\mu(y)\leq v\}} \mu_t(y)^q dt + v^{-p} \int_{\{\mu(y)>v\}} \mu_t(y)^p dt \\
& = & d(v;\Theta_{p,q}\mu(y)),
\end{eqnarray*}
where the final equality follows from Corollary~\ref{cor:distTheta}. The conclusion now follows from Lemma~\ref{lem:keyDI}.
\end{proof}
The following result facilitates interpolation of noncommutative square function estimates.
\begin{theorem}
\label{thm:l2Int}
Let $E$ be a symmetric quasi-Banach function space on $\R_+$ which is $s$-convex for some $0<s<\infty$. Let $\cM,\cN$ be von Neumann algebras equipped with normal, semi-finite, faithful traces $\tr$ and $\si$, respectively. Suppose that $0<p<q\leq\infty$ and for $k\geq 1$ let $T_k:L^{p}(\cM) + L^{q}(\cM)\rightarrow S(\si)$ be linear maps such that for some constants $C_p, C_q>0$ depending only on $p$ and $q$, respectively,
\begin{equation*}
\label{eqn:BoydPosAssl2} \Big\|\sum_{k\geq 1} T_k(x_k)\Big\|_{L^{r,\infty}(\cN)} \leq C_r \Big\|\Big(\sum_{k\geq 1}|x_k|^2\Big)^{\frac{1}{2}}\Big\|_{L^r(\cM)} \ \ \ (x_k
\in L^r(\cM)_+, k\geq 1, r=p,q).
\end{equation*}
If $p<p_E\leq q_E<q<\infty$ or $p<p_E$ and $q=\infty$, then for any finite sequence $(x_k)$ in $E(\cM)$
$$\Big\|\sum_{k\geq 1} T_k(x_k)\Big\|_{E(\cN)} \leq 2\|\Theta_{p,q}\|\max\{C_p, C_q\} \ \Big\|\Big(\sum_{k\geq 1}|x_k|^2\Big)^{\frac{1}{2}}\Big\|_{E(\cM)}.$$
\end{theorem}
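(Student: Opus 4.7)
The plan is to follow the strategy of Theorems~\ref{thm:BoydPos} and \ref{thm:l1Int}, but to split by the spectral projections of the ``column operator''
\[ y \;:=\; \Big(\sum_k |x_k|^2\Big)^{\frac{1}{2}} \in E(\cM)_+. \]
After normalizing so that $C_p, C_q \leq 1$ and assuming without loss of generality that $\|y\|_{E(\cM)}$ is finite, I would fix $v > 0$ and set $e_v := e^{y}[0,v]$. \emph{Linearity} of each $T_k$ yields the direct decomposition $\sum_k T_k(x_k) = \sum_k T_k(x_k e_v) + \sum_k T_k(x_k e_v^{\perp})$, and so by Proposition~\ref{pro:DistProperties}(b),
\[ d\Big(2v;\sum_k T_k(x_k)\Big) \;\leq\; d\Big(v;\sum_k T_k(x_k e_v)\Big) \;+\; d\Big(v;\sum_k T_k(x_k e_v^{\perp})\Big). \]
In contrast to the proof of Theorem~\ref{thm:l1Int}, linearity lets us bypass Lemma~\ref{lem:projDom} and hence gain a factor of $2$ in the final constant, matching the $2\|\Theta_{p,q}\|\max\{C_p,C_q\}$ stated in the theorem.

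Next I would apply the weak-type hypothesis with $r = q$ to the first term and $r = p$ to the second, together with (\ref{eqn:LrinftyDist}). The key algebraic identity needed at this step is
\[ \sum_k |x_k e_v|^2 \;=\; \sum_k e_v x_k^* x_k e_v \;=\; e_v y^2 e_v \;=\; |y e_v|^2, \]
from which $\bigl\|\bigl(\sum_k |x_k e_v|^2\bigr)^{\frac{1}{2}}\bigr\|_{L^q(\cM)} = \|y e_v\|_{L^q(\cM)}$, and the analogous equality holds with $e_v^{\perp}$ in place of $e_v$ and $L^p$ in place of $L^q$. This identity -- together with the bookkeeping of adjoints and projections it conceals -- is the one place where the argument deviates from the single-operator case of Theorem~\ref{thm:BoydPos}, and is the only step where I would expect to spend nontrivial effort.

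Applying Proposition~\ref{pro:DRProperties}(e)-(f) to $y$ then converts $\|y e_v\|_{L^q(\cM)}^q$ and $\|y e_v^{\perp}\|_{L^p(\cM)}^p$ into truncated integrals of $\mu(y)$, and Corollary~\ref{cor:distTheta} assembles them into $d(v;\Theta_{p,q}\mu(y))$:
\[ d\Big(2v;\sum_k T_k(x_k)\Big) \;\leq\; v^{-q}\int_{\{\mu(y)\leq v\}}\mu_t(y)^q\,dt \;+\; v^{-p}\int_{\{\mu(y)>v\}}\mu_t(y)^p\,dt \;=\; d(v;\Theta_{p,q}\mu(y)). \]
Lemma~\ref{lem:keyDI} (applied with $\alpha = 2$ in $E$, and using $d(v;\cdot)=d(v;\mu(\cdot))$) then delivers $\|\sum_k T_k(x_k)\|_{E(\cN)} \leq 2\|\Theta_{p,q}\|\,\|y\|_{E(\cM)}$; reinstating $C_p, C_q$ gives the claimed inequality. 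The case $q = \infty$ runs exactly as in the corresponding branch of the proof of Theorem~\ref{thm:BoydPos}: since $\|y e_v\|_{L^{\infty}(\cM)} \leq v$ by construction of $e_v$, the hypothesis forces $d(v;\sum_k T_k(x_k e_v)) = 0$, so only the $p$-contribution survives and Lemma~\ref{lem:keyDI} closes the argument.
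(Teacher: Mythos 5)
Your proposal is correct and follows essentially the same route as the paper: split each $x_k$ by the spectral projection $e_v$ of $y=(\sum_k|x_k|^2)^{1/2}$, use linearity of the $T_k$ to decompose the sum, apply the weak type hypothesis together with the identity $\sum_k|x_ke_v|^2=e_vy^2e_v$ (and its analogue for $e_v^\perp$), identify the resulting bound with $d(v;\Theta_{p,q}\mu(y))$ via Corollary~\ref{cor:distTheta}, and conclude with Lemma~\ref{lem:keyDI}. The paper's proof is identical in all these steps, including the observation that linearity makes Lemma~\ref{lem:projDom} unnecessary, which yields the constant $2\|\Theta_{p,q}\|\max\{C_p,C_q\}$.
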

\begin{proof}
We may assume $C_p,C_q\leq 1$. Let $x=(\sum_{k\geq 1}|x_k|^2)^{\frac{1}{2}}$ and for $v>0$ define $e_v = e^x[0,v]$. Then,
$$d(2v;\sum_{k\geq 1}T_k(x_k)) \leq  d(v;\sum_{k\geq 1}T_k(x_ke_v^{\perp})) + d(v;\sum_{k\geq 1}T_k(x_ke_v)).$$
By (\ref{eqn:BoydPosAssl2}),
\begin{eqnarray*}
d(2v;\sum_{k\geq 1}T_k(x_k)) & \leq & v^{-p}\Big\|\Big(\sum_{k\geq 1}|x_ke_v^{\perp}|^2\Big)^{\frac{1}{2}}\Big\|_{L^p(\cM)}^p + v^{-q}\Big\|\Big(\sum_{k\geq
1}|x_ke_v|^2\Big)^{\frac{1}{2}}\Big\|_{L^q(\cM)}^q \\
& = & v^{-p}\Big\|\Big(\sum_{k\geq 1}|x_k|^2\Big)^{\frac{1}{2}}e_v^{\perp}\Big\|_{L^p(\cM)}^p + v^{-q}\Big\|\Big(\sum_{k\geq
1}|x_k|^2\Big)^{\frac{1}{2}}e_v\Big\|_{L^q(\cM)}^q \\
& = & v^{-p} \int_{\{\mu(x)>v\}} \mu_t(x)^p dt + v^{-q} \int_{\{\mu(x)\leq v\}} \mu_t(x)^q dt \\
& = & d(v;\Theta_{p,q}\mu(x)).
\end{eqnarray*}
The result now follows from Lemma~\ref{lem:keyDI}. The case $q=\infty$ is similar.
\end{proof}
As a corollary, we find the following version of Stein's inequality for noncommutative symmetric spaces, which will be needed in the proof of Theorem~\ref{thm:BurkRos} below. A different proof of this result was found in \cite{Jia12}.
\begin{corollary}
\label{cor:SteinNC} Let $E$ be a symmetric quasi-Banach function space on $\R_+$ which is $s$-convex for some $0<s<\infty$ and let $\cM$ be a semi-finite von Neumann
algebra. Let $(\cE_k)_{k\geq 1}$ be an increasing sequence of conditional expectations in $\cM$. If $1<p_E\leq q_E<\infty$, then for any finite sequence $(x_k)$ in
$E(\cM)$,
\begin{equation}
\label{eqn:SteinNC}
\Big\|\Big(\sum_{k\geq 1} |\cE_k(x_k)|^2\Big)^{\frac{1}{2}}\Big\|_{E(\cM)} \lesssim_E \Big\|\Big(\sum_{k\geq 1}|x_k|^2\Big)^{\frac{1}{2}}\Big\|_{E(\cM)}.
\end{equation}
\end{corollary}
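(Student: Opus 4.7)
The plan is to interpolate the noncommutative Stein inequality for $L^p$-spaces (the Pisier--Xu result, used for example in \cite{JuX03}), which says that for $1<r<\infty$,
$$\Big\|\Big(\sum_{k\geq 1}|\cE_k(y_k)|^2\Big)^{1/2}\Big\|_{L^r(\cM)} \leq C_r \Big\|\Big(\sum_{k\geq 1}|y_k|^2\Big)^{1/2}\Big\|_{L^r(\cM)} \qquad (y_k \in L^r(\cM)).$$
Theorem~\ref{thm:l2Int} cannot be invoked as a black box because the target norm in Stein's inequality is again a square function rather than a bare sum. Nevertheless, the proof of Theorem~\ref{thm:l2Int} adapts with one extra ingredient, namely operator monotonicity of the square root, to deal with the square function on the left.

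Fix $1<p<p_E\leq q_E<q<\infty$ (possible since $1<p_E\leq q_E<\infty$), and rescale so that the Stein constants $C_p,C_q$ are at most $1$. Set $x=(\sum_k|x_k|^2)^{1/2}$ and $y=(\sum_k|\cE_k(x_k)|^2)^{1/2}$; the goal is $\|y\|_{E(\cM)}\lesssim_E\|x\|_{E(\cM)}$. For $v>0$ put $e_v=e^x[0,v]$ and split $x_k=x_ke_v+x_ke_v^{\perp}$. The crucial algebraic identity, valid for any projection $e$, is
$$\Big(\sum_k|x_ke|^2\Big) \;=\; \sum_k e\,|x_k|^2\,e \;=\; e\,x^2\,e \;=\; |xe|^2,$$
so taking square roots gives $(\sum_k|x_ke|^2)^{1/2}=|xe|$. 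Combined with Proposition~\ref{pro:DRProperties}(e,f) this yields
$$\Big\|\Big(\sum_k|x_ke_v|^2\Big)^{1/2}\Big\|_{L^q(\cM)}^q=\int_{\{\mu(x)\leq v\}}\mu_t(x)^q\,dt, \qquad \Big\|\Big(\sum_k|x_ke_v^{\perp}|^2\Big)^{1/2}\Big\|_{L^p(\cM)}^p=\int_{\{\mu(x)>v\}}\mu_t(x)^p\,dt.$$

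From linearity of $\cE_k$ and $|a+b|^2\leq 2|a|^2+2|b|^2$ we obtain the operator inequality
$$y^2\leq 2B^2+2C^2, \qquad B:=\Big(\sum_k|\cE_k(x_ke_v)|^2\Big)^{1/2},\ \ C:=\Big(\sum_k|\cE_k(x_ke_v^{\perp})|^2\Big)^{1/2}.$$
Operator monotonicity of $\sqrt{\cdot}$ gives $y\leq(2B^2+2C^2)^{1/2}$; then Proposition~\ref{pro:DistProperties}(b) applied to $B^2+C^2$ delivers
$$d(2v;y)\leq d(v^2;2B^2+2C^2)\leq d(v;B)+d(v;C).$$
Bounding $d(v;B)$ by Chebyshev (Lemma~\ref{lem:Cheb}) together with the $L^q$-Stein inequality, and $d(v;C)$ with the $L^p$-Stein inequality, and using the computations above,
$$d(2v;y)\leq v^{-q}\int_{\{\mu(x)\leq v\}}\mu_t(x)^q\,dt+v^{-p}\int_{\{\mu(x)>v\}}\mu_t(x)^p\,dt = d(v;\Theta_{p,q}\mu(x)),$$
by Corollary~\ref{cor:distTheta}. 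Lemma~\ref{lem:keyDI} then gives $\|y\|_{E(\cM)}\leq 2\|\Theta_{p,q}\|_{E\to E}\|x\|_{E(\cM)}$, and undoing the initial rescaling inserts the factor $\max\{C_p,C_q\}$.

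The main subtlety, and the only point where we must go beyond a verbatim copy of the proof of Theorem~\ref{thm:l2Int}, is passing from $y^2\leq 2B^2+2C^2$ to a usable distribution inequality for $y$ itself. The naive route $y\leq\sqrt{2}(B+C)$ fails because $B$ and $C$ do not commute, so one has to invoke operator monotonicity of $\sqrt{\cdot}$ and then split the distribution function of $B^2+C^2$ additively. Everything else is a routine cutoff-plus-endpoint argument of exactly the form used in Theorems~\ref{thm:BoydPos}, \ref{thm:l1Int} and \ref{thm:l2Int}.
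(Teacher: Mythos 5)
Your argument is correct in substance, but it takes a genuinely different route from the paper, and its opening premise is off: Theorem~\ref{thm:l2Int} \emph{can} be invoked as a black box. The paper's proof sets $y_k=x_k\otimes e_{k1}$ and $T_k=\cE_k\otimes\id_{B(l^2)}$ in $\cM\vNT B(l^2)$; placing the $x_k$ in a column turns the output square function into the single sum $\sum_k T_k(y_k)$, so (\ref{eqn:SteinNC}) becomes literally an inequality of the form treated in Theorem~\ref{thm:l2Int}, with the weak type hypotheses supplied by the Pisier--Xu Stein inequality in $L^r$, $1<r<\infty$. Your alternative --- re-running the cutoff argument directly at the level of square functions, using the identity $\sum_k|x_ke|^2=|xe|^2$ for the input and operator monotonicity of the square root (equivalently, comparing distribution functions of the squares) for the output --- is a valid, self-contained substitute resting on the same ingredients (Lemma~\ref{lem:keyDI}, Lemma~\ref{lem:Cheb}, the $L^p$/$L^q$ Stein inequality); it is longer, but it avoids the amplification to $\cM\vNT B(l^2)$ and makes explicit how the square function on the left is handled.

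One display needs repair: as written, the inequality $d(v^2;2B^2+2C^2)\le d(v;B)+d(v;C)$ is false in general (already for scalar $B=C$ with spectrum between $v/2$ and $v$), because Proposition~\ref{pro:DistProperties}(b) forces you to split the level, not keep it. The inequality you actually need is nevertheless true, and follows from the correct chain $d(2v;y)=d(4v^2;y^2)\le d(4v^2;2B^2+2C^2)=d(2v^2;B^2+C^2)\le d(v^2;B^2)+d(v^2;C^2)=d(v;B)+d(v;C)$; in your version you discarded the factor $4$ in the first step and then could not afford to. With this one-line correction, the Chebyshev-plus-Stein bounds at level $v$ match $d(v;\Theta_{p,q}\mu(x))$ exactly as you claim, and Lemma~\ref{lem:keyDI} (applied to $\mu(y)$, as in the proof of Theorem~\ref{thm:BoydPos}) yields the stated conclusion with constant $2\|\Theta_{p,q}\|\max\{C_p,C_q\}$.
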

\begin{proof}
Let $e_{kl}$ be the standard matrix units, let $y_k = x_k \ot e_{k1}$ and let $T_k = \cE_k \ot \id_{B(l^2)}$. Then (\ref{eqn:SteinNC}) is equivalent to
$$\Big\|\sum_{k\geq 1} T_k(y_k)\Big\|_{E(\cM\ot B(l^2))} \lesssim_E \Big\|\Big(\sum_{k\geq 1}|y_k|^2\Big)^{\frac{1}{2}}\Big\|_{E(\cM\ot B(l^2))}.$$
By \cite{PiX97}, Theorem 2.3, this inequality holds if $E=L^p$ with $1<p<\infty$. Hence, the result follows immediately from Theorem~\ref{thm:l2Int}.
\end{proof}
Further examples of probabilistic inequalities which can be interpolated using the presented method are given by the `upper' noncommutative Khintchine inequalities, see
\cite{DPP11}, Theorem 4.1, and \cite{DiR11}, Corollary 2.2.
\begin{remark}
The results in Theorems~\ref{thm:BoydPos}, \ref{thm:l1Int} and \ref{thm:l2Int}, Corollaries~\ref{cor:dualDoobSymm} and \ref{cor:SteinNC}, and
Proposition~\ref{pro:CEantiBdd} all have an appropriate `$\Phi$-moment version'. Indeed, these versions follow immediately by using Lemma~\ref{lem:keyDIOrlicz} instead
of Lemma~\ref{lem:keyDI} in the proofs of the latter results. In particular, by following the proof of Theorem~\ref{thm:BoydPos} and taking
Remark~\ref{rem:OrliczNonconvex} into account, we find an extension of \cite{BeC10}, Theorem 2.1 for non-convex Orlicz functions.
\end{remark}

\section{Interpolation of noncommutative maximal inequalities}

In this section we present a Boyd-type interpolation theorem for noncommutative maximal inequalities. To formulate maximal inequalities in noncommutative symmetric
spaces and their dual versions, we first introduce the two `noncommutative vector-valued symmetric spaces' $E(\cM;l^{\infty})$ and $E(\cM;l^1)$. We define these in
analogy with the noncommutative vector-valued $L^p$-spaces $L^p(\cM;l^{\infty})$ and $L^p(\cM;l^1)$, which were introduced in \cite{Pis98} for hyperfinite von Neumann
algebras, and considered in general in \cite{Jun02}. From now on, we let $E$ be a symmetric Banach function space on $\R_+$.\par We define $E(\cM;l^{\infty})$ to be the
space of all sequences $x=(x_k)_{k\geq 1}$ in $E(\cM)$ for which there exist $a,b \in E^{(2)}(\cM)$ and a bounded sequence $y=(y_k)_{k\geq 1}$ such that
$$x_k = ay_kb \qquad (k\geq 1).$$
For $x \in E(\cM;l^{\infty})$ we define
\begin{equation}
\label{eqn:linftyNorm} \|x\|_{E(\cM;l^{\infty})} = \inf\{\|a\|_{E^{(2)}(\cM)}\sup_{k\geq 1} \|y_k\|_{\infty}\|b\|_{E^{(2)}(\cM)}\},
\end{equation}
where the infimum is taken over all possible factorizations of $x$ as above. We can think of the quantity (\ref{eqn:linftyNorm}) as `$\|\sup_{k\geq 1} x_k\|_{E(\cM)}$',
even though $\sup_{k\geq 1} x_k$ need not be defined at all.\par We define $E(\cM;l^1)$ to be the space of all sequences $x=(x_k)_{k\geq 1}$ in $E(\cM)$ which can be
decomposed as
$$x_k = \sum_{j\geq 1} u_{jk}^*v_{jk} \qquad (k\geq 1)$$
for two families $(u_{jk})_{j,k\geq 1}$ and $(v_{jk})_{j,k\geq 1}$
in $E^{(2)}(\cM)$ satisfying
$$\sum_{j,k} u_{jk}^*u_{jk} \in E(\cM) \ \ \mathrm{and} \ \ \sum_{j,k} v_{jk}^*v_{jk} \in E(\cM),$$
where the series converge in norm. For $x \in E(\cM;l^1)$ we define
$$\|x\|_{E(\cM;l^1)} = \inf\Big\{\Big\|\sum_{j,k} u_{jk}^*u_{jk}\Big\|_{E(\cM)}^{\frac{1}{2}}\Big\|\sum_{j,k} v_{jk}^*v_{jk}\Big\|_{E(\cM)}^{\frac{1}{2}}\Big\},$$
where the infimum is taken over all decompositions of $x$ as above.
In what follows, we will mostly consider elements $x=(x_k)_{k\geq 1}
\in E(\cM;l^1)$ for which $x_k\geq 0$ for all $k$. In this case,
$$\|x\|_{E(\cM;l^1)} = \Big\|\sum_{k\geq 1}x_k\Big\|_{E(\cM)}.$$
The theory for the spaces $E(\cM;l^{\infty})$ and $E(\cM;l^1)$ can be developed in full analogy with the special case $E=L^p$ considered in \cite{Jun02,JuX07, Xu07}. In
fact, most of the basic results follow \verb"verbatim" as soon as we replace $L^p$ by $E$, $L^{p'}$ by $E^{\ti}$, where $\frac{1}{p} + \frac{1}{p'} = 1$, and $L^{2p}$ by
$E^{(2)}$ in the proofs of these results. For example, the following observation is immediate.
\begin{theorem}
\label{thm:Inftyl1Banach} If $E$ is a symmetric Banach function space on $\R_+$, then $E(\cM;l^{\infty})$ and $E(\cM;l^1)$ are Banach spaces.
\end{theorem}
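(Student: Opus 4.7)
The plan is to follow \textit{verbatim} the strategy used for the case $E=L^p$ in \cite{Jun02,JuX07,Xu07}, replacing $L^p$ by $E$ and $L^{2p}$ by $E^{(2)}$ throughout. The tools needed are: completeness of both $E(\cM)$ and $E^{(2)}(\cM)$ (which follow from Theorem~\ref{thm:KaS}, since $E$ is Banach and $E^{(2)}$ is itself a $2$-convex Banach function space when $E$ is Banach), together with the H\"older-type inequality $\|ab\|_{E(\cM)}\leq\|a\|_{E^{(2)}(\cM)}\|b\|_{E^{(2)}(\cM)}$, which is the substitute for the scalar H\"older inequality used in the $L^p$ arguments and which follows from $\mu_t(ab)\leq\mu_{t/2}(a)\mu_{t/2}(b)$ combined with the $2$-convexity of $E^{(2)}$.

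For $E(\cM;l^\infty)$ the delicate axiom is the triangle inequality. Given $x,x'\in E(\cM;l^\infty)$ with $(1+\eps)$-optimal factorizations $x_k=ay_kb$ and $x'_k=a'y'_kb'$ (rescaled so that $\|a\|_{E^{(2)}}=\|b\|_{E^{(2)}}$, $\|a'\|_{E^{(2)}}=\|b'\|_{E^{(2)}}$, and $\sup_k\|y_k\|_\infty,\sup_k\|y'_k\|_\infty\leq 1$), I would extract, via polar decomposition of the row $(a,a')$ and column $(b,b')^T$, a factorization
\[ x_k+x'_k=(aa^*+a'a'^*)^{1/2}\,w_k\,(b^*b+b'^*b')^{1/2}, \]
with $\|w_k\|_\infty\leq 1$. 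Since $E$ is a Banach function space, $\|aa^*+a'a'^*\|_{E(\cM)}\leq\|a\|_{E^{(2)}}^2+\|a'\|_{E^{(2)}}^2$, and likewise for $b,b'$, which together with the matched scaling yields $\|x+x'\|\leq(1+\eps)(\|x\|+\|x'\|)$; sending $\eps\downarrow 0$ closes this step. Positive homogeneity is immediate and positive definiteness follows from the H\"older estimate applied entrywise. For $E(\cM;l^1)$, the triangle inequality is proved by concatenating two decompositions $(u_{jk},v_{jk})$ and $(u'_{jk},v'_{jk})$ into one family indexed by $\N\sqcup\N$, and positive definiteness follows from a Cauchy-Schwarz-type estimate applied to each $x_k$.

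For completeness of either space, I would extract from a given Cauchy sequence $(x^{(n)})$ a subsequence with $\|x^{(n+1)}-x^{(n)}\|<2^{-n}$, choose near-optimal factorizations of each telescoping difference, and glue them together using the completeness of $E^{(2)}(\cM)$: in the $l^\infty$-case the aggregated element $a=\bigl(\sum_n a_na_n^*\bigr)^{1/2}$ and its counterpart $b$ are shown to converge in $E^{(2)}(\cM)$ by the Banach estimate above; in the $l^1$-case the families combine by summation and one invokes completeness of $E(\cM)$ for $\sum u^*u$ and $\sum v^*v$. The main obstacle is purely bookkeeping: the entire argument rests on $E$ being a \emph{Banach} function space in order to apply $\|A+B\|_E\leq\|A\|_E+\|B\|_E$ to sums such as $aa^*+a'a'^*$; this is precisely why the statement is restricted to the Banach range. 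No other specific property of $L^p$ beyond these ingredients enters the proofs in \cite{Jun02,JuX07,Xu07}, so the transfer to $E$ is automatic.
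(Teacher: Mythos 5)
Your proposal takes exactly the route the paper does: the paper offers no separate argument for Theorem~\ref{thm:Inftyl1Banach}, but simply observes that the $L^p$-proofs of \cite{Jun02,JuX07,Xu07} transfer verbatim once $L^p$ is replaced by $E$ and $L^{2p}$ by $E^{(2)}$, which is precisely your plan (you in fact supply more of the row/column factorization and completeness details than the paper records). The argument is correct as written.
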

Our strategy to prove a Boyd-type interpolation theorem for maximal inequalities is to dualize Theorem~\ref{thm:l1Int}, which can be viewed as a Boyd-type interpolation
theorem for $l^1$-valued noncommutative symmetric spaces. We shall need the duality stated in Theorem~\ref{thm:Inftyl1Dual} below. The proof is essentially an adaptation
of the Hahn-Banach separation argument in \cite{Jun02}, Proposition 3.6 (see also \cite{Xu07}, Theorem 4.11) to our context. We need the following observation, proved in
\cite{DDP93}, Theorem 5.6 and p.\ 745.
\begin{theorem}
\label{thm:dualSep} If $E$ is a separable symmetric Banach function space on $\R_+$, then $E(\cM)^*=E^{\times}(\cM)$ isometrically, with associated duality bracket given by
$$\langle x,y\rangle = \tr(xy) \qquad (x \in E(\cM), \ y \in E^{\ti}(\cM)).$$
\end{theorem}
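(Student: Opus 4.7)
\medskip

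My plan is to split the proof into two parts: first establish an isometric embedding $E^{\ti}(\cM) \hookrightarrow E(\cM)^*$ via the pairing $y \mapsto L_y := \tr(\,\cdot\,y)$, and then use separability of $E$ (equivalently, order continuity of the norm on $E(\cM)$) to prove surjectivity. For the embedding, the well-definedness and upper bound $\|L_y\| \leq \|y\|_{E^{\ti}(\cM)}$ should follow from the Fack--Kosaki trace inequality $|\tr(xy)| \leq \int_0^{\infty} \mu_t(x)\mu_t(y)\,dt$ combined with the classical scalar K\"othe duality inequality $\int_0^{\infty} \mu_t(x)\mu_t(y)\,dt \leq \|\mu(x)\|_E \|\mu(y)\|_{E^{\ti}}$. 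For the reverse bound, given $y$, exploit the characterization $\|\mu(y)\|_{E^{\ti}} = \sup\{\int_0^{\infty} f(t)\mu_t(y)\,dt : f \in E_+, \|f\|_E \leq 1\}$; for each such $f$ one can use the polar decomposition $y = u|y|$ together with the Borel functional calculus applied to the spectral resolution of $|y|$ to build an $x \in E(\cM)$ with $\mu(x) = f$ and $\tr(xy)$ equal to the scalar integral, showing $\|y\|_{E^{\ti}(\cM)} \leq \|L_y\|$.

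For surjectivity, let $\phi \in E(\cM)^*$. Separability of $E$ forces order continuous norm on $E$, and this transfers to $E(\cM)$, which moreover embeds continuously back into $\cM$ after multiplying by the spectral projection $e^{|x|}[0,v]$ of any finite-trace projection region. Concretely, for each projection $p \in \cM$ with $\tr(p) < \infty$, the inclusion $p\cM p \hookrightarrow E(\cM)$ is continuous (since $\mu(pxp) \leq \|x\|_{\infty}\chi_{[0,\tr(p))}$), so $\phi$ restricts to a bounded linear functional on $p\cM p$. Order continuity ensures this restriction is normal, hence represented by some $y_p \in L^1(p\cM p) \subset S(\tr)$ via $\phi(z) = \tr(y_p z)$ for all $z \in p\cM p$. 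Taking an increasing net of such projections $p_\alpha \uparrow \id$, the consistency relation $p_\alpha y_{p_\beta} p_\alpha = y_{p_\alpha}$ for $p_\alpha \leq p_\beta$ then assembles the $y_{p_\alpha}$ into a closed, densely-defined operator $y$ affiliated with $\cM$. Testing the uniform bound $|\tr(zy)| = |\phi(z)| \leq \|\phi\|\,\|z\|_{E(\cM)}$ against all $z$ in a dense set of $E(\cM)$ together with the first half of the proof identifies $y$ as an element of $E^{\ti}(\cM)$ with $\|y\|_{E^{\ti}(\cM)} \leq \|\phi\|$, and then $\phi = L_y$ by density.

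The main obstacle is Step~3, namely the gluing step: verifying that the local densities $y_{p_\alpha}$ combine into a genuinely $\tr$-measurable operator, and subsequently that this operator lives in $E^{\ti}(\cM)$ rather than merely being affiliated with $\cM$. The delicate points are (i) ensuring each $y_{p_\alpha}$ is captured inside $S(\tr)$ before gluing (which uses that $p_\alpha\cM p_\alpha$ is a finite von Neumann algebra and that normal functionals on it are $L^1$-represented), (ii) proving compatibility across the net, and (iii) using the scalar K\"othe dual characterization together with $\mu(y) = \sup_\alpha \mu(p_\alpha y p_\alpha)$ to promote the pairing bound into membership of $E^{\ti}$. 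Separability of $E$ is used precisely at the normality step and also to reduce computations to a countable exhausting net, where measure-topology convergence arguments become available.
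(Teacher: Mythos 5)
This theorem is not proved in the paper at all: it is quoted from \cite{DDP93} (Theorem 5.6 and p.\ 745), so there is no internal argument to compare with; your outline is broadly the standard route (isometric embedding of $E^{\ti}(\cM)$ via the trace pairing, then representation of all functionals using order continuity), but as written it contains two genuine gaps. In the embedding step, the assertion that for every $f\in E_+$ with $\|f\|_E\leq 1$ one can produce $x\in E(\cM)$ with $\mu(x)=f$ and $\tr(xy)=\int_0^{\infty}f(t)\mu_t(y)\,dt$ by Borel functional calculus of $|y|$ is false in general: operators of the form $h(|y|)u^*$ only realize rearrangements of the form $h(\mu(y))$, so if $\mu(y)$ has intervals of constancy (e.g.\ $\cM$ atomic, where every $\mu(x)$ is a step function with prescribed step lengths) an arbitrary decreasing $f$ cannot be matched. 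The step can be repaired by replacing $f$ with its averages over the intervals of constancy of $\mu(y)$, which leaves $\int f\mu(y)$ unchanged; but to keep $\|f\|_E\leq 1$ under this averaging you need full symmetry of $E$ (available here because a separable symmetric space is fully symmetric, as the paper notes) or an approximation argument. Neither repair appears in your proposal, and without it the inequality $\|y\|_{E^{\ti}(\cM)}\leq\|L_y\|$ is not established.

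The more serious gap is the gluing step, which you yourself flag as the main obstacle but do not resolve. Consistency $p_\alpha y_{p_\beta}p_\alpha=y_{p_\alpha}$ of the corner densities does not by itself ``assemble'' them into a closed, densely defined operator: no domain is specified, no limit of the net $(y_{p_\alpha})$ in any topology is exhibited, and affiliation alone would in any case be insufficient, since one needs $y\in S(\tr)$ even to write $\tr(zy)$, and then membership in $E^{\ti}(\cM)$. A workable way to close this would be, for instance, to apply your first step inside each corner to get the uniform bound $\|y_{p_\alpha}\|_{E^{\ti}(\cM)}\leq\|\phi\|$, observe that $\mu(y_{p_\alpha})$ increases along the net since compression decreases rearrangements, and then use the continuous embedding $E^{\ti}\subset L^1+L^{\infty}$ together with weak$^*$ compactness in $L^1(\cM)+\cM$ (or the normal/singular decomposition of functionals on $E(\cM)$ used in \cite{DDP93}) to extract a single $y$ representing $\phi$ on the dense set of operators with finite-trace support; the identity $\phi=L_y$ and the norm bound then follow as you indicate. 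Without some such mechanism the surjectivity half is not a proof but a plan. (The auxiliary facts you invoke, that separability of $E$ passes order continuity to $E(\cM)$ and that normal functionals on $p\cM p$ have $L^1$-densities, are correct but are themselves nontrivial results that would have to be cited.)
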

Below we will implicitly use the trace property a number of times, i.e., we will use that if $x,y \in S(\tr)$ are such that $xy,yx \in L^1(\cM)$, then $\tr(xy)=\tr(yx)$.
In particular this holds if $x \in E(\cM)$ and $y \in E^{\ti}(\cM)$.
\begin{theorem}
\label{thm:Inftyl1Dual} Let $\cM$ be a semi-finite von Neumann algebra and let $E$ be a separable symmetric Banach function space on $\R_+$. If $y=(y_k) \in
E^{\ti}(\cM;l^{\infty})$ satisfies $y_k\geq 0$ for all $k$, then
\begin{equation}
\label{eqn:maxNormDual} \|y\|_{E^{\ti}(\cM;l^{\infty})} = \sup\Big\{\sum_{k\geq 1} \tr(x_ky_k) \ : \ x_k\in E(\cM)_+, \ \Big\|\sum_{k\geq 1}x_k\Big\|_{E(\cM)}\leq
1\Big\}.
\end{equation}
\end{theorem}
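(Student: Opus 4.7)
The plan is to mirror the Hahn--Banach separation argument from \cite{Jun02}, Proposition 3.6, exploiting the duality $E(\cM)^{*} = E^{\ti}(\cM)$ provided by Theorem~\ref{thm:dualSep}. Denote the right-hand side of \eqref{eqn:maxNormDual} by $M$.

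The inequality $M\leq \|y\|_{E^{\ti}(\cM;l^{\infty})}$ is the easy direction. For positive sequences one has a \emph{symmetric} factorization $y_{k}=a^{*}z_{k}a$ with $a\in (E^{\ti})^{(2)}(\cM)$, $0\leq z_{k}\in\cM$, and $\|a\|_{(E^{\ti})^{(2)}}^{2}\sup_{k}\|z_{k}\|_{\infty}$ arbitrarily close to $\|y\|_{E^{\ti}(\cM;l^{\infty})}$ (this passage from the general to the symmetric factorization is standard, via polar decomposition, as in \cite{Jun02,JuX07}). For $x_{k}\in E(\cM)_{+}$ with $\|\sum_{k}x_{k}\|_{E}\leq 1$, the trace property together with $0\leq z_{k}\leq\sup_{j}\|z_{j}\|_{\infty}\cdot\id$ gives
$$\sum_{k}\tr(x_{k}y_{k})=\sum_{k}\tr(ax_{k}a^{*}z_{k})\leq \sup_{k}\|z_{k}\|_{\infty}\tr\Big(a^{*}a\sum_{k}x_{k}\Big)\leq \sup_{k}\|z_{k}\|_{\infty}\cdot\|a\|_{(E^{\ti})^{(2)}}^{2},$$
where the last step invokes H\"older's inequality $|\tr(wx)|\leq\|w\|_{E^{\ti}}\|x\|_{E}$; taking the infimum over factorizations yields the bound.

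For the reverse inequality, I would first reformulate the $E^{\ti}(\cM;l^{\infty})$-norm via Douglas' factorization as
$$\|y\|_{E^{\ti}(\cM;l^{\infty})}=\inf\{\|b\|_{E^{\ti}}\ :\ b\in E^{\ti}(\cM)_{+},\ b\geq y_{k}\ \text{for all } k\}.$$
(If $y_{k}\leq b$ then $y_{k}^{1/2}=b^{1/2}u_{k}$ for a contraction $u_{k}$, hence $y_{k}=b^{1/2}u_{k}u_{k}^{*}b^{1/2}$; conversely, $y_{k}=a^{*}z_{k}a$ with $z_{k}\leq c\cdot\id$ gives $y_{k}\leq ca^{*}a$.) Assume for contradiction that no dominator $b$ with $\|b\|_{E^{\ti}}\leq M+\eps$ exists. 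Then $\cB:=\{b\in E^{\ti}(\cM):\ b\geq y_{k}\ \forall k\}$ is convex and weak$^{*}$-closed in $E^{\ti}(\cM)=E(\cM)^{*}$, being the intersection of the half-spaces $\{b:\tr(xb)\geq\tr(xy_{k})\}$ over $x\in E(\cM)_{+}$ and $k\geq 1$; the closed ball $(M+\eps)\overline{B}_{E^{\ti}}$ is weak$^{*}$-compact by Banach--Alaoglu. Weak$^{*}$-Hahn--Banach separation produces $x\in E(\cM)$ and $\alpha<\beta$ with $\tr(xb)\leq\alpha$ on the ball and $\tr(xb)\geq\beta$ on $\cB$. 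Upward closedness of $\cB$ in $E^{\ti}(\cM)_{+}$ forces $x\geq 0$ (else $\tr(xb)$ could be driven to $-\infty$ along $\cB$), and the ball bound yields $(M+\eps)\|x\|_{E}\leq\alpha$.

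The main obstacle is then the order-duality identity
$$\inf_{b\in\cB}\tr(xb)=\sup\Big\{\sum_{k}\tr(x_{k}y_{k})\ :\ x_{k}\in E(\cM)_{+},\ \sum_{k}x_{k}=x\Big\}\qquad (x\in E(\cM)_{+}),$$
whose right-hand side is $\leq M\|x\|_{E}$ by the definition of $M$, while the left-hand side is $\geq\beta>(M+\eps)\|x\|_{E}$, producing the desired contradiction. The direction ``$\geq$'' is immediate: $\tr(xb)=\sum_{k}\tr(x_{k}b)\geq\sum_{k}\tr(x_{k}y_{k})$ whenever $\sum_{k}x_{k}=x$, $x_{k}\geq 0$, and $b\in\cB$. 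For ``$\leq$'' I would set up the Lagrangian associated with the constraints $b\geq y_{k}$ (with multipliers $\mu_{k}\in E(\cM)_{+}$) and perform a minimax exchange: formally, swapping $\inf_{b}\sup_{\mu}$ with $\sup_{\mu}\inf_{b}$ identifies the infimum with the supremum over $\mu_{k}\geq 0$ satisfying $\sum_{k}\mu_{k}=x$. I would justify this exchange by truncating to finitely supported sequences (where a standard convex-duality / Sion minimax argument applies on the norm-bounded set $\{(\mu_{k})_{k=1}^{n}:\mu_{k}\geq 0,\sum\mu_{k}=x\}$) and then passing to the limit via the Fatou property of $E^{\ti}$ and the monotone convergence of the trace. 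Handling the lack of reflexivity of the symmetric spaces in this limiting step is where I expect the real technical work to lie.
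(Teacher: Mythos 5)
Your easy inequality and the reformulation of $\|y\|_{E^{\ti}(\cM;l^{\infty})}$ as $\inf\{\|b\|_{E^{\ti}(\cM)} : b\geq y_k \ \forall k\}$ are fine (though the passage from a general factorization $y_k=az_kb$ to a dominating operator rests on the inequality $az_kb+b^*z_k^*a^*\leq az_kz_k^*a^*+b^*b$, not on polar decomposition alone), and the weak$^*$ separation of $\cB$ from the ball $(M+\eps)\overline{B}_{E^{\ti}(\cM)}$ is legitimate. The problem is that your argument then stands or falls with the order-duality identity $\inf\{\tr(xb): b\geq y_k \ \forall k\}=\sup\{\sum_k\tr(x_ky_k): x_k\geq 0,\ \sum_k x_k=x\}$, which you do not prove. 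Taken uniformly in $x$, this identity \emph{is} the theorem (in its equivalent form $\inf\{\|b\|: b\geq y_k\}=M$), so your reduction is essentially circular unless that identity receives an independent proof; and the proof you sketch does not go through as described. Sion's minimax theorem needs compactness on one side: the multiplier set $\{(\mu_k)_{k=1}^n:\mu_k\geq 0,\ \sum_k\mu_k=x\}$ is merely norm-bounded in $E(\cM)^n$, and since $E$ is separable, $E(\cM)$ is in general not reflexive and not a dual space (take $E=L^1$), so no weak or weak$^*$ compactness is available there. Putting the compactness on the $b$-side, via weak$^*$-compact balls of $E^{\ti}(\cM)=E(\cM)^*$, only yields the penalized identity $\inf_{b\in\cB,\|b\|\leq R}\tr(xb)=\sup_{\mu_k\geq 0}\{\sum_k\tr(\mu_ky_k)-R\|x-\sum_k\mu_k\|_{E(\cM)}\}$, and letting $R\to\infty$ forces you to upgrade an approximate decomposition $\sum_k\mu_k\approx x$ to an exact one without losing control of $\sum_k\tr(\mu_ky_k)$ --- exactly the step you defer, and one for which there is no obvious noncommutative mechanism (one cannot simply rescale as in the commutative case).

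For comparison, the paper's proof avoids any minimax exchange. Assuming the supremum equals $1$, it works inside $C(K)$, where $K$ is the positive part of the unit ball of $E^{\ti}(\cM)$ with the weak$^*$ topology (compact by Theorem~\ref{thm:dualSep} and Banach--Alaoglu), separates the cone of functions $s\mapsto\sum_k\tr(x_ks)-\sum_k\tr(x_ky_k)$ (which all have nonnegative supremum on $K$) from the open cone $\{g:\sup g<0\}$, and the separating functional is a positive measure which can be normalized to a probability measure $\mu$ on $K$. The barycenter $a=\int_K s\,d\mu(s)$ then satisfies $\tr(xy_k)\leq\tr(xa)$ for all $x\in E(\cM)_+$, hence $y_k\leq a$ with $\|a\|_{E^{\ti}(\cM)}\leq 1$, which gives the dominating factorization directly. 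Until you supply a complete proof of your order-duality identity (or replace that step by such a barycenter construction), the proposal has a genuine gap at its central step.
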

\begin{proof}
We let $S$ denote the supremum on the right hand side of (\ref{eqn:maxNormDual}). Let $y_k = az_kb$ with $a,b\in (E^{\ti})^{(2)}(\cM)$ and $z_k \in \cM$ with
$\|z_k\|_{\infty}\leq 1$ and let $(x_k)$ be a sequence in $E(\cM)_+$. By H\"{o}lder's inequality,
\begin{eqnarray*}
\sum_{k} \tr(x_ky_k) & = & \sum_{k}\tr(x_kaz_kb) = \sum_{k}\tr(bx_k^{\frac{1}{2}}x_k^{\frac{1}{2}}az_k) \\
& \leq & \sum_{k}\|bx_k^{\frac{1}{2}}\|_{L^2(\cM)}\|x_k^{\frac{1}{2}}az_k\|_{L^2(\cM)} \\
& \leq & \Big(\sum_{k}\tr(bx_kb^*)\Big)^{\frac{1}{2}}\Big(\sum_{k}\tr(a^*x_ka)\Big)^{\frac{1}{2}} \\
& \leq & \|b\|_{(E^{\ti})^{(2)}(\cM)}\Big\|\sum_{k}x_k\Big\|_{E(\cM)} \|a\|_{(E^{\ti})^{(2)}(\cM)}.
\end{eqnarray*}
We conclude that $S\leq \|y\|_{E^{\ti}(\cM;l^{\infty})}$.\par Suppose now that $S=1$, we will show that $\|y\|_{E^{\ti}(\cM;l^{\infty})}\leq 1$. Under this assumption, we have for any finite sequence $x=(x_{k})$ in $E(\cM)_+$,
\begin{equation}
\label{eqn:HBsepIneqPos} \sum_{k} \tr(x_ky_k) \leq \Big\|\sum_{k}x_k\Big\|_{E(\cM)}.
\end{equation}
Let $K=\{s \in E^{\ti}(\cM)_+ \ : \ \|s\|_{E^{\ti}(\cM)}\leq 1\}$, equipped with the weak$^*$ topology of $E(\cM)^*$. Since $E^{\ti}(\cM)$ is isometrically isomorphic to
$E(\cM)^*$ by Theorem~\ref{thm:dualSep} and $E^{\ti}(\cM)_+$ is weak$^*$ closed in $E^{\ti}(\cM)$, we conclude that $K$ is compact by the Banach-Alaoglu theorem. Moreover,
$$\|w\|_{E(\cM)} = \sup_{s \in K} \tr(ws) \qquad (w \in E(\cM)_+).$$
For any finite sequence $x$ as above we define
$$f_{x}(s) = \sum_{k}\tr(x_ks) - \sum_{k}\tr(x_ky_k).$$
Clearly $f_{x}$ is a real-valued continuous function on $K$ and from (\ref{eqn:HBsepIneqPos}) it follows that $\sup_{s\in K} f_x(s)\geq 0$. Let $A$ be the subset of
$C(K)$ consisting of all $f_{x}$, where $x=(x_{k})$ is a finite sequence in $E(\cM)_+$. Then $A$ is a cone in $C(K)$. Indeed, if $\lambda\geq 0$ then $\lambda f_{x} =
f_{\lambda x}$. Moreover, if $x,\tilde{x}$ are finite families in $E(\cM)$, then $f_{x} + f_{\tilde{x}}$ can be realized as $f_{x+\tilde{x}}$, since without loss of
generality we may assume that $\tilde{x}$ is to the right of the finite family $x$. Observe that $A$ is disjoint from the cone $A_{-} = \{g \in C(K) \ : \ \sup g<0\}$.
By the Hahn-Banach separation theorem, there exists a real Borel measure $\mu$ on $K$ and $\al \in \R$ such that for all $f \in A$ and $g \in A_{-}$,
$$\int_{K}g d\mu \leq \al \leq \int_{K} f \ d\mu.$$
Note that $\al=0$, as both $A$ and $A_{-}$ are cones. If $B$ is a Borel subset, then we can find a sequence $(g_i)$ in $A_{-}$ such that $g_n\uparrow -\chi_B$. This
shows that $\mu$ must be positive, and by normalization we may assume that $\mu$ is a probability measure. Hence, for all $k\geq 1$, we have
\begin{equation}
\label{eqn:intRep} \tr(xy_k) \leq \int_{K} \tr(xs) d\mu(s) \qquad (x \in E(\cM)_+).
\end{equation}
Define a positive operator by
$$a = \int_{K} s d\mu(s).$$
Clearly $a \in K$, so $a \in E^{\ti}(\cM)_+$ and $\|a\|_{E^{\ti}(\cM)}\leq 1$. By (\ref{eqn:intRep}) and normality of $\tr$,
\begin{equation*}
\tr(xy_k) \leq \tr(xa) \qquad (x \in E(\cM)_+).
\end{equation*}
This implies that $y_k\leq a$ and therefore we find a contraction $u_k \in \cM$ such that $y_k^{\frac{1}{2}}=u_ka^{\frac{1}{2}}$. In particular, $y_k =
a^{\frac{1}{2}}u_k^*u_ka^{\frac{1}{2}}$ and hence
$$\|y\|_{E^{\ti}(\cM;l^{\infty})} \leq \|a\|_{E^{\ti}(\cM)} \leq 1.$$
This completes the proof.
\end{proof}
\begin{remark}
Using a slightly more involved version of the separation argument in the proof of Theorem~\ref{thm:Inftyl1Dual} (see the proof of \cite{Jun02}, Proposition 3.6, for the
case $E=L^p$), one may show that in fact
$$E(\cM;l^1)^* = E^{\ti}(\cM;l^{\infty})$$
isometrically, with respect to the duality bracket
$$\langle x,y\rangle = \sum_{k\geq 1} \tr(x_ky_k),$$
where $x \in E(\cM;l^1)$ and $y \in E^{\ti}(\cM;l^{\infty})$.
\end{remark}
The following result facilitates the interpolation of noncommutative maximal inequalities.
\begin{theorem}
\label{thm:linftyInt} Let $E$ be the K\"{o}the dual of a separable symmetric Banach function space on $\R_+$. Suppose that $1\leq p<q<\infty$ and let $S_k:L^{p,1}(\cM)_+
+ L^{q,1}(\cM)_+\rightarrow S(\tr)_+$ be positive linear operators satisfying
\begin{equation}
\label{eqn:maxBdd} \|(S_k(x))_{k\geq 1}\|_{L^r(\cM;l^{\infty})} \lesssim_r \|x\|_{L^{r,1}(\cM)} \qquad (x \in L^{r,1}(\cM)_+, r=p,q).
\end{equation}
If $q_E<q$ and either $p=1$ or $p_E>p$, then
\begin{equation}
\label{eqn:linftyIntConc}
\|(S_k(x))_{k\geq 1}\|_{E(\cM;l^{\infty})} \lesssim_E \|x\|_{E(\cM)} \qquad (x \in E(\cM)_+).
\end{equation}
\end{theorem}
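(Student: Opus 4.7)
The approach is to dualize: I will reduce the maximal inequality (\ref{eqn:linftyIntConc}) to an $l^1$-type inequality on the pre-K\"othe dual $F$ of $E$ for the adjoint operators $S_k^*$, and then invoke Theorem~\ref{thm:l1Int}. Write $E=F^{\times}$ with $F$ separable. Since $S_k$ is positive and $x\geq 0$, each $S_k(x)$ is positive, so Theorem~\ref{thm:Inftyl1Dual} applied to $F$ gives the representation
\[
\|(S_k(x))\|_{E(\cM;l^{\infty})}=\sup\Big\{\sum_k \tr(z_k S_k(x)) : z_k\in F(\cM)_+,\ \Big\|\sum_k z_k\Big\|_{F(\cM)}\leq 1\Big\}.
\]
The trace property rewrites $\tr(z_k S_k(x))=\tr(S_k^*(z_k)\,x)$, where $S_k^*$ is the positive linear adjoint of $S_k$, which is well defined on the positive cones of $L^{q'}(\cM)+L^{p'}(\cM)$ (with $p'=\infty$ when $p=1$) because (\ref{eqn:maxBdd}) ensures $S_k$ maps $L^{r,1}(\cM)_+$ boundedly into $L^r(\cM)$. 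H\"older's inequality for the K\"othe pairing $(F,E)$ then yields
\[
\sum_k \tr(z_k S_k(x)) = \tr\Big(x\sum_k S_k^*(z_k)\Big) \leq \Big\|\sum_k S_k^*(z_k)\Big\|_{F(\cM)}\ \|x\|_{E(\cM)},
\]
so it suffices to prove the dual inequality
\[
\Big\|\sum_k S_k^*(z_k)\Big\|_{F(\cM)} \lesssim_F \Big\|\sum_k z_k\Big\|_{F(\cM)},\qquad z_k\in F(\cM)_+. \tag{$\dagger$}
\]

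To obtain ($\dagger$) I apply Theorem~\ref{thm:l1Int} to $T_k=S_k^*$ on $F$, with indices $q'$ and $p'$ (the upper index being $\infty$ when $p=1$). The operators are positive and linear, hence midpoint convex. The required $L^r$-to-$L^{r,\infty}$ bounds for $T_k$ are obtained by dualizing the hypothesis exactly as above: Theorem~\ref{thm:Inftyl1Dual} applied to $L^{r'}$ gives, for $w\in L^{r,1}(\cM)_+$ and $z_k\in L^{r'}(\cM)_+$,
\[
\tr\Big(w\sum_k S_k^*(z_k)\Big)=\sum_k \tr(z_k S_k(w)) \lesssim_r \|w\|_{L^{r,1}(\cM)}\Big\|\sum_k z_k\Big\|_{L^{r'}(\cM)};
\]
taking the supremum over positive $w$ in the unit ball of $L^{r,1}(\cM)$ and using the K\"othe duality $L^{r,1}(\cM)^\times=L^{r',\infty}(\cM)$ (with $L^{\infty,\infty}=L^\infty$ in the borderline case) delivers
\[
\Big\|\sum_k S_k^*(z_k)\Big\|_{L^{r',\infty}(\cM)} \lesssim_r \Big\|\sum_k z_k\Big\|_{L^{r'}(\cM)} \qquad (r=p,q).
\]

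Finally, the Boyd index conditions in Theorem~\ref{thm:l1Int} are translated into the hypotheses on $E$ via the duality relations (\ref{eqn:dualityBoydIndices}), which are available because $F$ is separable, hence order-continuous, hence has Fatou norm: one obtains $p_F = q_E'$ and $q_F = p_E'$. Thus $q'<p_F$ is equivalent to $q_E<q$, and (when $p>1$) $q_F<p'$ is equivalent to $p<p_E$, exactly matching the assumptions; the case $p=1$ directly fits the $q=\infty$ branch of Theorem~\ref{thm:l1Int}. Since $F$ is a Banach function space it is $1$-convex, so all hypotheses are verified, ($\dagger$) holds, and the main inequality follows. The principal technical point to handle carefully is the rigorous setup of $S_k^*$ on the correct positive cones and the bookkeeping of the two cases $p=1$ and $p>1$; once those are in place the argument is a clean duality sandwich between two applications of Theorem~\ref{thm:Inftyl1Dual} separated by Theorem~\ref{thm:l1Int}.
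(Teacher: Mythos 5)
Your proposal is correct and follows essentially the same route as the paper's own proof: you dualize the hypothesis (\ref{eqn:maxBdd}) through the $l^1$--$l^\infty$ H\"older pairing to get Marcinkiewicz weak type bounds for $\sum_k S_k^*$, apply Theorem~\ref{thm:l1Int} in the predual space $F$ with exponents $q',p'$ (translating the index assumptions via (\ref{eqn:dualityBoydIndices})), and return to the maximal norm via Theorem~\ref{thm:Inftyl1Dual}. The only difference is presentational (you state the reduction to the dual inequality first and are slightly more explicit about the Fatou norm of $F$ and the borderline case $p=1$), not mathematical.
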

\begin{proof}
Let $F$ be the symmetric space on $\R_+$ such that $F^{\ti} = E$. By (\ref{eqn:dualityBoydIndices}) we have $p_F>q'$ and, if $p_E>1$ also $q_F<p'$. Since $S_k$ is
positive, so is its adjoint $S_k^*$. If $r\in\{p,q\}$ and $y=(y_k) \in L^{r'}(\cM;l^1)$ with $y_k\geq 0$, then for any $x\in L^{r,1}(\cM)_+$,
\begin{eqnarray*}
\sum_{k\geq 1} \tr(S_k^*(y_k)x) & = & \sum_{k\geq 1} \tr(y_kS_k(x)) \\
& \leq & \|y\|_{L^{r'}(\cM;l^1)} \|(S_k(x))_{k\geq 1}\|_{L^r(\cM;l^{\infty})} \\
& \lesssim_r & \|y\|_{L^{r'}(\cM;l^1)} \|x\|_{L^{r,1}(\cM)}.
\end{eqnarray*}
It follows that
$$\Big\|\sum_{k\geq 1} S_k^*(y_k)\Big\|_{L^{r',\infty}(\cM)} \lesssim_p \Big\|\sum_{k\geq 1} y_k\Big\|_{L^{r'}(\cM)}.$$
Therefore, if $(y_k) \in F(\cM;l^1)$ satisfies $y_k\geq 0$, then by
Theorem~\ref{thm:l1Int},
$$\Big\|\sum_{k\geq 1} S_k^*(y_k)\Big\|_{F(\cM)} \lesssim_F \Big\|\sum_{k\geq 1} y_k\Big\|_{F(\cM)}.$$
Hence, if $x \in E(\cM)_+$, then
\begin{eqnarray*}
\sum_{k\geq 1} \tr(y_kS_k(x)) & = & \sum_{k\geq 1} \tr(S_k^*(y_k)x) \\
& \leq & \Big\|\sum_{k\geq 1} S_k^*(y_k)\Big\|_{F(\cM)} \|x\|_{E(\cM)} \lesssim_E \Big\|\sum_{k\geq 1} y_k\Big\|_{F(\cM)} \|x\|_{E(\cM)}.
\end{eqnarray*}
By Theorem~\ref{thm:Inftyl1Dual} we conclude that (\ref{eqn:linftyIntConc}) holds.
\end{proof}
Examples of sequences of operators satisfying the conditions of Theorem~\ref{thm:linftyInt} are established in \cite{JuX07}. We give two examples which yield maximal
ergodic inequalities in noncommutative symmetric spaces. Let $T:\cM\rightarrow \cM$ be a linear map such that
\begin{enumerate}
\item $T$ is a contraction on $\cM$;
\item $T$ is positive;
\item $\tr(T(x))\leq \tr(x)$ for all $x \in L^1(\cM)\cap\cM_+$.
\end{enumerate}
In \cite{JuX07}, Theorem 4.1, it is shown that for any $1<p\leq\infty$ the ergodic averages
$$M_k(T) = \frac{1}{k+1}\sum_{i=0}^k T^i \qquad (k\geq 1),$$
satisfy the maximal inequality
$$\|(M_k(T)(x))_{k\geq 1}\|_{L^p(\cM;l^{\infty})} \lesssim_p \|x\|_{L^p(\cM)} \qquad (x \in L^p(\cM)_+).$$
If $T$ moreover satisfies
\begin{enumerate}
\renewcommand{\labelenumi}{(\alph{enumi})}
\addtocounter{enumi}{3}
\item $\tr(T(y)^*x) = \tr(y^*T(x))$ for all $x,y \in L^2(\cM)\cap\cM$,
\end{enumerate}
then, as was observed in \cite{JuX07}, Theorem 5.1, for every
$1<p\leq\infty$ one has
$$\|(T^k(x))_{k\geq 1}\|_{L^p(\cM;l^{\infty})} \lesssim_p \|x\|_{L^p(\cM)} \qquad (x \in L^p(\cM)_+).$$
Using Theorem~\ref{thm:linftyInt} we can interpolate these inequalities to obtain the following result.
\begin{theorem}
\label{thm:maxErgod}
Let $E$ be the K\"{o}the dual of a separable symmetric Banach function space on $\R_+$ and suppose that $1<p_E\leq q_E<\infty$. If $T:\cM\rightarrow\cM$ is a linear
operator satisfying conditions (a)-(c) above, then
$$\|(M_k(T)(x))_{k\geq 1}\|_{E(\cM;l^{\infty})} \lesssim_E \|x\|_{E(\cM)} \qquad (x \in E(\cM)_+).$$
If $T$ moreover satisfies condition (d), then
$$\|(T^k(x))_{k\geq 1}\|_{E(\cM;l^{\infty})} \lesssim_E \|x\|_{E(\cM)} \qquad (x \in E(\cM)_+).$$
\end{theorem}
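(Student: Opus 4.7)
The strategy is to apply Theorem~\ref{thm:linftyInt} with the operator sequences $S_k = M_k(T)$ and $S_k = T^k$, respectively. Since $1 < p_E \leq q_E < \infty$, I first fix exponents $p, q$ with $1 < p < p_E$ and $q_E < q < \infty$; this choice immediately satisfies the Boyd-index conditions ($q_E < q$ and $p_E > p$) appearing in Theorem~\ref{thm:linftyInt}. The hypothesis on $E$ (Köthe dual of a separable symmetric Banach function space) is inherited directly from the statement.

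Next I verify the remaining hypotheses. Positivity and linearity of the $S_k$ are clear from condition (b): $T^k$ is positive by iteration, and $M_k(T)$ is a convex combination of positive maps, hence positive. For the endpoint maximal bounds at $r = p, q$, I invoke Theorems 4.1 and 5.1 of \cite{JuX07}, recalled just before the statement, which give
$$\|(M_k(T)(x))_{k\geq 1}\|_{L^r(\cM;l^{\infty})} \lesssim_r \|x\|_{L^r(\cM)}, \qquad \|(T^k(x))_{k\geq 1}\|_{L^r(\cM;l^{\infty})} \lesssim_r \|x\|_{L^r(\cM)}$$
for every $1 < r \leq \infty$ (the second inequality under the extra assumption (d)). Since $L^{r,1}(\cM)$ embeds continuously into $L^r(\cM)$---a consequence of the standard Lorentz inclusion $L^{r,1}(\R_+) \hookrightarrow L^r(\R_+)$ applied to the decreasing rearrangement---these strong-type estimates automatically upgrade to the weak-type hypothesis \eqref{eqn:maxBdd} required by Theorem~\ref{thm:linftyInt} at the exponents $p$ and $q$.

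With both ingredients in place, Theorem~\ref{thm:linftyInt} delivers the two claimed inequalities immediately. I do not anticipate any genuine obstacle here; the proof is a clean combination of the Junge--Xu $L^p$-maximal inequalities with the interpolation machinery already developed. The only subtlety worth flagging is the minor mismatch between the strong-type input available from \cite{JuX07} and the $L^{r,1}$-based weak-type hypothesis of Theorem~\ref{thm:linftyInt}; this is bridged trivially by the Lorentz inclusion $L^{r,1} \hookrightarrow L^r$, so no additional argument is needed.
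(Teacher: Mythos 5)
Your proposal is correct and follows essentially the same route as the paper, which proves Theorem~\ref{thm:maxErgod} exactly by feeding the Junge--Xu maximal ergodic inequalities of \cite{JuX07} into Theorem~\ref{thm:linftyInt} with exponents chosen so that $1<p<p_E\leq q_E<q<\infty$. Your bridging remark that the strong-type $L^r$ bounds imply the $L^{r,1}$-based hypothesis \eqref{eqn:maxBdd} via the contractive inclusion $L^{r,1}(\cM)\subset L^r(\cM)$ is precisely the (implicit) step the paper relies on, so no gap remains.
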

To conclude this section, we prove a version of Doob's maximal inequality for noncommutative symmetric spaces. First recall the following definitions. Let $E$ be a symmetric Banach function space on $\R_+$ and let $\cM$ be a semi-finite von Neumann algebra. Suppose that $(\cM_k)_{k\geq 1}$ is a filtration, i.e., an increasing sequence of von Neumann subalgebras such that $\tr|_{\cM_k}$ is semi-finite, and let $\cE_k$ be the conditional
expectation with respect to $\cM_k$. A sequence $(y_k)$ in $E(\cM)$ is called a \emph{martingale} with respect to $(\cM_k)$ if
$\cE_{k}(y_{k+1})=y_k$ for all $k\geq 1$. We say that $(y_k)$ is \emph{finite} if there is an $n\geq 1$ such that $y_k=y_n$ for all $k\geq n$. A sequence $(x_k)$ in $E(\cM)$ is called a \emph{martingale difference sequence} if $x_k = y_k - y_{k-1}$ for some martingale $(y_k)$, with the convention $y_{0}=0$ and $\cM_0 = \C\id$.\par
It was shown by M.\ Junge in \cite{Jun02} that for every $1<p\leq\infty$,
\begin{equation}
\label{eqn:DoobMaxLp} \|(\cE_k(y))_{k\geq 1}\|_{L^p(\cM;l^{\infty})}
\lesssim_p \|y\|_{L^p(\cM)}.
\end{equation}
This result implies the following version for noncommutative symmetric spaces.
\begin{theorem}
\label{thm:DoobMax} Let $\cM$ be a semi-finite von Neumann algebra and let $E$ be the K\"{o}the dual of a separable symmetric Banach function space on $\R_+$ with
$1<p_E\leq q_E<\infty$. For any $y \in E(\cM)$ and any increasing sequence of conditional expectations $(\cE_k)_{k\geq 1}$,
\begin{equation}
\label{eqn:DoobMax} \|(\cE_k(y))_{k\geq 1}\|_{E(\cM;l^{\infty})}
\lesssim_E \|y\|_{E(\cM)}.
\end{equation}
If $(y_k)_{k\geq 1}$ is a martingale in $E(\cM)$, then
$$\sup_{k\geq 1}\|y_k\|_{E(\cM)} \leq \|(y_k)_{k\geq 1}\|_{E(\cM;l^{\infty})} \lesssim_E \sup_{k\geq 1}\|y_k\|_{E(\cM)}.$$
\end{theorem}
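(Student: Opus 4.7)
The plan is to derive (\ref{eqn:DoobMax}) as a direct application of Theorem~\ref{thm:linftyInt} to the operators $S_k = \cE_k$, using Junge's $L^p$ Doob inequality (\ref{eqn:DoobMaxLp}) as the endpoint input. Choose exponents $1 < p < p_E$ and $q_E < q < \infty$, which is possible by the hypothesis on the Boyd indices. Conditional expectations are positive linear maps, and for $1 < r < \infty$ the continuous embedding $L^{r,1}(\cM) \hookrightarrow L^r(\cM)$, with $\|y\|_{L^r(\cM)} \lesssim_r \|y\|_{L^{r,1}(\cM)}$, allows one to rewrite (\ref{eqn:DoobMaxLp}) as
$$\|(\cE_k(y))_{k\geq 1}\|_{L^r(\cM;l^{\infty})} \lesssim_r \|y\|_{L^{r,1}(\cM)} \qquad (r = p, q).$$
This verifies the hypothesis of Theorem~\ref{thm:linftyInt}, whose conclusion is precisely the inequality (\ref{eqn:DoobMax}) for $y \in E(\cM)_+$; the extension to general $y \in E(\cM)$ proceeds by the standard decomposition into real, imaginary, positive and negative parts.

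For the martingale assertion, the lower bound is immediate from the definition of $\|\cdot\|_{E(\cM;l^{\infty})}$: any admissible factorization $y_k = a z_k b$ satisfies $\|y_k\|_{E(\cM)} \leq \|a\|_{E^{(2)}(\cM)} \sup_j \|z_j\|_{\infty} \|b\|_{E^{(2)}(\cM)}$ for every $k$, so taking the infimum over factorizations followed by the supremum over $k$ yields $\sup_k \|y_k\|_{E(\cM)} \leq \|(y_k)_{k\geq 1}\|_{E(\cM;l^{\infty})}$. For the upper bound, the key observation is that a martingale satisfies $y_k = \cE_k(y_n)$ whenever $k \leq n$. If the martingale is finite, eventually constant equal to $y_n$, then applying the first part of the theorem to $y = y_n$ produces
$$\|(y_k)_{k\geq 1}\|_{E(\cM;l^{\infty})} = \|(\cE_k(y_n))_{k\geq 1}\|_{E(\cM;l^{\infty})} \lesssim_E \|y_n\|_{E(\cM)} \leq \sup_k \|y_k\|_{E(\cM)}.$$
For a general martingale with $M := \sup_k \|y_k\|_{E(\cM)} < \infty$, one truncates at step $n$, applies the finite case with a constant independent of $n$, and then passes to the limit using a suitable lower-semicontinuity property of the $E(\cM;l^{\infty})$-norm.

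The step requiring the most care is this limiting argument in the infinite-martingale case: since $\|\cdot\|_{E(\cM;l^{\infty})}$ is defined by an infimum over factorizations in $E^{(2)}(\cM)$, one must extract, from uniformly bounded optimal factorizations of the truncated sequences, a factorization of the full sequence whose norm controls the limit. This should follow from weak$^*$-compactness in $E^{(2)}(\cM)$ combined with the Banach space property of $E(\cM;l^{\infty})$ established in Theorem~\ref{thm:Inftyl1Banach}. Every other step is a routine consequence of the interpolation machinery already developed, so the bulk of the work is absorbed into Theorem~\ref{thm:linftyInt} and Junge's endpoint inequality.
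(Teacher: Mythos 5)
Your derivation of (\ref{eqn:DoobMax}) from Theorem~\ref{thm:linftyInt} with $S_k=\cE_k$, via $L^{r,1}(\cM)\subset L^r(\cM)$ and Junge's inequality (\ref{eqn:DoobMaxLp}), and your proof of the lower bound $\sup_k\|y_k\|_{E(\cM)}\leq\|(y_k)\|_{E(\cM;l^{\infty})}$ are exactly the paper's argument (for the lower bound the paper is slightly more careful: the H\"older-type estimate $\|azb\|_{E(\cM)}\leq\|a\|_{E^{(2)}(\cM)}\|z\|_{\infty}\|b\|_{E^{(2)}(\cM)}$ is justified through $\mu(azb)\prec\prec\mu(az)\mu(b)$ and the fact that $E$, being a K\"othe dual, is fully symmetric). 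The finite-martingale observation $y_k=\cE_k(y_n)$ is also fine.

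The genuine gap is your treatment of a general (infinite) martingale. You truncate, get a bound uniform in $n$, and then appeal to ``a suitable lower-semicontinuity property of the $E(\cM;l^{\infty})$-norm'', suggested to follow from weak$^*$-compactness in $E^{(2)}(\cM)$ and Theorem~\ref{thm:Inftyl1Banach}. Neither ingredient delivers this: completeness of $E(\cM;l^{\infty})$ says nothing about coordinatewise limits of uniformly bounded sequences, and extracting weak$^*$-limits of near-optimal factorizations $y^{(n)}_k=a_nz^{(n)}_kb_n$ does not work directly because the product is not jointly weak$^*$-continuous, so one cannot pass to the limit inside the factorization. The honest route through duality would use $F(\cM;l^1)^*=E(\cM;l^{\infty})$ (with $F^{\ti}=E$), but in the paper this identification is only stated as an unproved remark; the proved Theorem~\ref{thm:Inftyl1Dual} covers positive sequences only, while a martingale need not be positive. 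The paper avoids all of this by a different, simpler step: since $E(\cM)\subset L^p(\cM)+L^q(\cM)$ for some $1<p\leq q<\infty$ (Lemma~\ref{lem:BoydInclusions}), a martingale bounded in $E(\cM)$ is bounded in $L^p(\cM)+L^q(\cM)$, hence converges there to some $y_{\infty}$ with $\cE_k(y_{\infty})=y_k$ for all $k$; because $E$ has the Fatou property, the unit ball of $E(\cM)$ is closed in the measure topology, so $y_{\infty}\in E(\cM)$ with $\|y_{\infty}\|_{E(\cM)}\leq\sup_k\|y_k\|_{E(\cM)}$, and one simply applies the already proved inequality (\ref{eqn:DoobMax}) to $y=y_{\infty}$. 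You should either adopt this construction of $y_{\infty}$ or supply a proof of the $l^1$--$l^{\infty}$ duality (and density of finitely supported sequences) that your limiting argument implicitly needs.
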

\begin{proof}
The first statement follows immediately from Theorem~\ref{thm:linftyInt} and (\ref{eqn:DoobMaxLp}). To prove the second statement, let $y_k = az_kb$ with $(z_k)_{k\geq
1}$ a bounded sequence in $\cM$ and $a,b \in E^{(2)}(\cM)$. Since $E$ is the K\"{o}the dual of a symmetric space, it has the Fatou property and is hence fully symmetric.
Therefore, $\mu(y_k)\prec\prec \mu(az_k)\mu(b)$ implies that
\begin{eqnarray*}
\|y_k\|_{E(\cM)} & \leq & \|\mu(az_k)\mu(b)\|_{E} \\
& \leq & \|az_k\|_{E^{(2)}(\cM)} \|b\|_{E^{(2)}(\cM)} \\
& \leq & \|a\|_{E^{(2)}(\cM)} \|z_k\|_{\infty} \|b\|_{E^{(2)}(\cM)}.
\end{eqnarray*}
Taking the infimum over all decompositions as above gives
$$\sup_{k\geq 1}\|y_k\|_{E(\cM)} \leq \|(y_k)_{k\geq 1}\|_{E(\cM;l^{\infty})}.$$
For the reverse inequality, observe that $E(\cM) \subset L^p(\cM)+L^q(\cM)$ for some $1<p<p_E\leq q_E<q<\infty$. Let $(y_k)_{k\geq 1}$ be a martingale in $E(\cM)$ with
$\sup_{k\geq 1} \|y_k\|_{E(\cM)} = 1$. Then $(y_k)$ is a bounded martingale in $L^p(\cM)+L^q(\cM)$ and hence there exists $y_{\infty} \in L^p(\cM)+L^q(\cM)$ such that
$y_k \rightarrow y_{\infty}$ in $L^p(\cM)+L^q(\cM)$ and $\cE_k(y_{\infty})=y_k$ for all $k\geq 1$. Since $E(\cM)$ has the Fatou property, its unit ball is closed in
$S(\tr)$ (cf.\ \cite{DDP93}, Proposition 5.14). As $y_k\rightarrow y_{\infty}$ in measure, we conclude that $y_{\infty} \in E(\cM)$ and $\|y_{\infty}\|_{E(\cM)}\leq 1$.
Applying (\ref{eqn:DoobMax}) for $y=y_{\infty}$ yields the result.
\end{proof}
\begin{remark}
While the author was completing this paper, a preprint \cite{BCO12} appeared in which Theorems~\ref{thm:maxErgod} and \ref{thm:DoobMax} are proved (under slightly different assumptions) by an alternative method.
\end{remark}

\section{Burkholder-Davis-Gundy and Burkholder-Rosenthal inequalities}

As applications of the noncommutative version of Doob's maximal
inequality and its dual version, we derive versions of the
Burkholder-Davis-Gundy inequalities and Burkholder-Rosenthal
inequalities in noncommutative symmetric spaces. Let $E$ be a
symmetric Banach function space on $\R_+$. For any finite martingale
difference sequence $(x_k)$ in $E(\cM)$ we set
$$\|(x_k)\|_{H^E_c} = \Big\|\Big(\sum_k|x_k|^2\Big)^{\frac{1}{2}}\Big\|_{E(\cM)}; \ \|(x_k)\|_{H^E_r} = \Big\|\Big(\sum_k|x_k^*|^2\Big)^{\frac{1}{2}}\Big\|_{E(\cM)}.$$
These expressions define two norms on the linear space of all finite martingale difference sequences in $E(\cM)$. The following Burkholder-Davis-Gundy inequalities
extend the Burkholder-Gundy inequalities established in \cite{DPP11}.
\begin{theorem}
\label{thm:BDG} Let $E$ be the K\"{o}the dual of a separable symmetric Banach function space on $\R_+$ and suppose that $1<p_E\leq q_E<\infty$. Let $\cM$ be a
semi-finite von Neumann algebra and $(\cM_k)_{k\geq 1}$ a filtration in $\cM$. Then, for any martingale difference sequence $(x_k)$ of a finite martingale $(y_k)$ in $E(\cM)$ we have
$$\|(x_k)_{k\geq 1}\|_{H^E_c+H^E_r} \lesssim_E \|(y_k)_{k\geq 1}\|_{E(\cM;l^{\infty})} \lesssim_E \|(x_k)_{k\geq 1}\|_{H^E_c\cap H^E_r}.$$
Suppose that, moreover, $E$ is separable. If $p_E>1$ and either $q_E<2$ or $E$ is $2$-concave, then
$$\|(y_k)_{k\geq 1}\|_{E(\cM;l^{\infty})} \simeq_E \|(x_k)\|_{H^E_c+H^E_r}.$$
On the other hand, if either $E$ is $2$-convex and $q_E<\infty$ or $2<p_E\leq q_E<\infty$ then
\begin{equation}
\label{eqn:BDG2pqInfty}
\|(y_k)_{k\geq 1}\|_{E(\cM;l^{\infty})} \simeq_E \|(x_k)\|_{H^E_c\cap H^E_r}.
\end{equation}
\end{theorem}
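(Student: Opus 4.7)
The plan is to combine Doob's maximal inequality (Theorem~\ref{thm:DoobMax}) with the noncommutative Burkholder--Gundy inequalities established in \cite{DPP11}. The starting observation is that for a finite martingale $(y_k)$ with horizon $n$, the identity $y_k = \cE_k(y_n)$ together with contractivity of conditional expectations on symmetric spaces forces $\sup_{k\geq 1}\|y_k\|_{E(\cM)} = \|y_n\|_{E(\cM)}$. The second half of Theorem~\ref{thm:DoobMax} then gives
$$\|y_n\|_{E(\cM)} \leq \|(y_k)\|_{E(\cM;l^{\infty})} \lesssim_E \|y_n\|_{E(\cM)},$$
so the whole problem reduces to comparing $\|y_n\|_{E(\cM)}$ with the bracket quantities $\|(x_k)\|_{H^E_c+H^E_r}$ and $\|(x_k)\|_{H^E_c\cap H^E_r}$.

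For the first two-sided inequality I would invoke the general noncommutative Burkholder--Gundy inequalities of \cite{DPP11}, which under $1<p_E\leq q_E<\infty$ yield
$$\|(x_k)\|_{H^E_c+H^E_r} \lesssim_E \|y_n\|_{E(\cM)} \lesssim_E \|(x_k)\|_{H^E_c\cap H^E_r};$$
chaining with the Doob equivalence above gives the first assertion of the theorem. For the refined equivalences, the sharper forms of DPP11's results apply: under $p_E>1$ together with either $q_E<2$ or $E$ being $2$-concave one has $\|y_n\|_{E(\cM)}\simeq_E \|(x_k)\|_{H^E_c+H^E_r}$, while under $E$ $2$-convex with $q_E<\infty$ or under $2<p_E\leq q_E<\infty$ one has $\|y_n\|_{E(\cM)}\simeq_E \|(x_k)\|_{H^E_c\cap H^E_r}$. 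Combining each with the Doob equivalence immediately yields the two stated $\simeq_E$ assertions. The additional separability hypothesis in the refined statements enters precisely to pass from separability of $E$ to order continuity of the norm on $E(\cM)$, which is what the sharper DPP11 arguments require.

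The main obstacle in this plan is confirming that \cite{DPP11} states the Burkholder--Gundy inequalities in exactly the forms needed---especially the general two-sided bound valid throughout the full range $1<p_E\leq q_E<\infty$ without any $2$-convexity or $2$-concavity assumption on $E$. Should only the sharper equivalences at the $2$-convex and $2$-concave extremes be directly available, the intermediate bound can be recovered by splitting the martingale into its column and row parts, applying the respective sharp equivalences to each piece, and patching the resulting estimates via the trivial inclusion $H^E_c\cap H^E_r\hookrightarrow H^E_c+H^E_r$.
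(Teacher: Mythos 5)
Your proposal is correct and follows essentially the same route as the paper: reduce via Theorem~\ref{thm:DoobMax} to comparing $\|(y_k)\|_{E(\cM;l^{\infty})}$ with $\|y_n\|_{E(\cM)}=\|\sum_k x_k\|_{E(\cM)}$, and then invoke the noncommutative Burkholder--Gundy inequalities of \cite{DPP11} (Proposition 4.18 there), which do contain both the general two-sided bound for $1<p_E\leq q_E<\infty$ and the sharper equivalences under the stated convexity/concavity hypotheses, so your contingency plan is not needed.
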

\begin{proof}
If $F$ is a symmetric Banach function space with $F^{\times} = E$, then by (\ref{eqn:dualityBoydIndices}) $1<p_F\leq q_F<\infty$. By Theorem~\ref{thm:DoobMax},
$$\Big\|\sum_{k\geq 1} x_k\Big\|_{E(\cM)} \simeq_E \Big\|\Big(\sum_{k=1}^n x_k\Big)_{n\geq 1}\Big\|_{E(\cM;l^{\infty})}.$$
The result now follows directly from \cite{DPP11}, Proposition 4.18.
\end{proof}
The following result generalizes the noncommutative Rosenthal inequalities presented in \cite{DPP11}, as well as the Burkholder-Rosenthal inequalities for noncommutative $L^p$-spaces and Lorentz spaces obtained in \cite{JuX03}, Theorem 5.1, and \cite{Jia10}, Theorem 3.1, respectively. In fact, Theorem~\ref{thm:BurkRos} positively answers an open question posed in \cite{Jia12}, Problem 3.5(2). The proof follows the general strategy of the proof of \cite{DPP11}, Theorem 6.3. Let $M_n(\cM)$ denote the von Neumann algebra of $n\ti n$ matrices with entries in $\cM$ and for any sequence $(x_k)_{k=1}^n$ in $E(\cM)$ we let $\mathrm{diag}(x_k)$ and $\mathrm{col}(x_k)$ be the matrices with the $x_k$ on the diagonal and first column, respectively, and zeroes elsewhere.
\begin{theorem}
\label{thm:BurkRos} (Noncommutative Burkholder-Rosenthal inequalities) Let $\cM$ be a semi-finite von Neumann algebra. Suppose that $E$ is a symmetric Banach function space on $\R_+$ satisfying $2<p_E\leq q_E<\infty$. Let $(\cM_k)$ be a filtration in $\cM$ and, for every $k\geq 1$, let $\cE_k$ denote the conditional expectation with respect to $\cM_k$. Let $(x_k)$ be a martingale difference sequence in $E(\cM)$ with respect to $(\cM_k)$. Then, for
any $n\geq 1$,
\begin{eqnarray}
\label{eqn:BurkRos}
\Big\|\sum_{k=1}^n x_k\Big\|_{E(\cM)} & \simeq_E & \max\Big\{\|\mathrm{diag}(x_k)_{k=1}^n\|_{E(M_n(\cM))}, \Big\|\Big(\sum_{k=1}^n\cE_{k-1}|x_k|^2\Big)^{\frac{1}{2}}\Big\|_{E(\cM)}, \nonumber \\
& & \ \ \ \ \ \Big\|\Big(\sum_{k=1}^n \cE_{k-1}|x_k^*|^2\Big)^{\frac{1}{2}}\Big\|_{E(\cM)}\Big\}.
\end{eqnarray}
\end{theorem}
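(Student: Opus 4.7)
My plan is to deduce the Burkholder--Rosenthal equivalence from the noncommutative Burkholder--Davis--Gundy inequality of Theorem~\ref{thm:BDG} by passing from the unconditioned square functions to their conditioned counterparts via the dual Doob inequality (Corollary~\ref{cor:dualDoobSymm}) and Stein's inequality (Corollary~\ref{cor:SteinNC}), all applied in the $2$-concavification $E_{(2)}$. The hypothesis $2<p_E\leq q_E<\infty$ yields $1<p_{E_{(2)}}=p_E/2\leq q_E/2=q_{E_{(2)}}<\infty$, so $E_{(2)}$ is (equivalent to) a symmetric Banach function space on which those corollaries apply. Concretely, I first invoke~(\ref{eqn:BDG2pqInfty}) together with Doob's maximal inequality (Theorem~\ref{thm:DoobMax}) to reduce the statement to proving
\[
\max\bigl\{\|s_c\|_{E(\cM)},\|s_r\|_{E(\cM)}\bigr\}\simeq_E\max\bigl\{\|\mathrm{diag}(x_k)\|_{E(M_n(\cM))},\|\sigma_c\|_{E(\cM)},\|\sigma_r\|_{E(\cM)}\bigr\},
\]
where $s_c=(\sum_k|x_k|^2)^{1/2}$, $s_r=(\sum_k|x_k^*|^2)^{1/2}$, and $\sigma_c,\sigma_r$ are the associated conditioned square functions.

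For the lower direction, applying Corollary~\ref{cor:dualDoobSymm} in $E_{(2)}$ to the positive sequence $(|x_k|^2)_k$ gives
\[
\|\sigma_c\|_{E(\cM)}^2=\Big\|\sum_k\cE_{k-1}|x_k|^2\Big\|_{E_{(2)}(\cM)}\lesssim_E\Big\|\sum_k|x_k|^2\Big\|_{E_{(2)}(\cM)}=\|s_c\|_{E(\cM)}^2,
\]
and analogously $\|\sigma_r\|_{E(\cM)}\lesssim_E\|s_r\|_{E(\cM)}$. Moreover, since $p_E>2$ forces $E$ to be $2$-convex, a standard Rotfel'd-type trace inequality (extended to symmetric spaces as in \cite{DPP11}) yields $\|\mathrm{diag}(x_k)\|_{E(M_n(\cM))}\lesssim_E\min\{\|s_c\|_{E(\cM)},\|s_r\|_{E(\cM)}\}$.

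For the upper direction, I plan to adapt the noncommutative $L^p$-argument of Junge--Xu~\cite{JuX03}, Theorem~5.1, to the symmetric-space setting. Writing $|x_k|^2=m_k+\cE_{k-1}|x_k|^2$ with $m_k$ a self-adjoint martingale difference in $E_{(2)}(\cM)$, the identity $s_c^2=\sum_km_k+\sigma_c^2$ yields
\[
\|s_c\|_{E(\cM)}^2\lesssim\Big\|\sum_km_k\Big\|_{E_{(2)}(\cM)}+\|\sigma_c\|_{E(\cM)}^2.
\]
To control the first term on the right I intend to employ a big--small spectral decomposition $x_k=x_ke_k+x_ke_k^{\perp}$ with $e_k=e^{|x_k|}[0,v]$, optimized over $v>0$ through the distribution-function machinery of Lemma~\ref{lem:keyDI}: the small part $x_ke_k$ is operator-norm bounded by $v$ and its contribution is absorbed by an $L^2$-type estimate lifted to $E_{(2)}$ via Corollary~\ref{cor:SteinNC}, while the large part $x_ke_k^{\perp}$ assembles into a block-decomposable operator dominated by $\mathrm{diag}(x_k)$. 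The analogous argument for $|x_k^*|^2$ then bounds $\|s_r\|_{E(\cM)}$ and finishes the proof. The main obstacle is executing the big--small decomposition rigorously in the symmetric-space framework, where one cannot pass freely between weak-type and strong-type estimates as one does in $L^p$; the interpolation machinery of Section~\ref{sec:BoydNC}, together with the dual Doob and Stein inequalities inside $E_{(2)}$, is precisely what makes the adaptation go through.
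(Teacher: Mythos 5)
Your outline reproduces the easy half of the paper's argument (dual Doob in $E_{(2)}$ for the conditioned square functions, reduction to the square-function equivalence via Burkholder--Gundy), but the decisive step of the hard direction is missing. After writing $|x_k|^2=m_k+\cE_{k-1}|x_k|^2$ you must control $\bigl\|\sum_k m_k\bigr\|_{E_{(2)}(\cM)}$, and your proposed big--small truncation "optimized over $v$ through Lemma~\ref{lem:keyDI}" has no mechanism to do this: Lemma~\ref{lem:keyDI} interpolates a fixed map satisfying two Marcinkiewicz weak-type bounds via the distributional inequality $d(\alpha v;g)\leq d(v;\Theta_{p,q}f)$, and in this step there is no such operator or weak-type estimate in sight; moreover the bound you need is genuinely multiplicative in two different norms. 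The paper's route is: apply Burkholder--Gundy in $E_{(2)}$ to the martingale difference sequence $(m_k)$, remove the conditioned terms with the Stein inequality (Corollary~\ref{cor:SteinNC}), reduce to $\bigl\|(\sum_k|x_k|^4)^{1/2}\bigr\|_{E_{(2)}(\cM)}$, and then estimate this by the H\"older/submajorization trick with $x=\mathrm{col}(|x_k|)$, $y=\mathrm{diag}(|x_k|)$ and $\mu(yx)\prec\prec\mu(x)\mu(y)$ (Calder\'on--Mitjagin), giving $\bigl\|(\sum_k|x_k|^4)^{1/2}\bigr\|_{E_{(2)}(\cM)}\lesssim_E\|\mathrm{diag}(x_k)\|_{E(M_n(\cM))}\,\|s_c\|_{E(\cM)}$. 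This produces the quadratic inequality $a^2\lesssim_E ab+c^2$ with $a=\|s_c\|$, $b=\|\mathrm{diag}(x_k)\|$, $c=\|\sigma_c\|$, whence $a\lesssim_E\max\{b,c\}$. Without a substitute for this multiplicative estimate your plan does not close, as you yourself concede.

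Two further hypothesis problems. First, your reduction through Theorem~\ref{thm:BDG} and Theorem~\ref{thm:DoobMax} requires $E$ to be the K\"othe dual of a separable symmetric space, which is not assumed in Theorem~\ref{thm:BurkRos}; the statement concerns $\|\sum_{k}x_k\|_{E(\cM)}$ directly, and the paper only invokes the Burkholder--Gundy inequality of \cite{DPP11}, Proposition 4.18, which needs no such assumption. Second, the claim that $p_E>2$ forces $E$ to be $2$-convex is false: Boyd indices do not imply convexity (only the converse direction holds). The paper instead bounds the diagonal term by interpolating the cotype-$q$ estimate of $L^q$ (for $q=2$ and $q>q_E$) against $\bigl\|\sum_k r_k\ot x_k\bigr\|$ and then using the Khintchine-type equivalence \cite{DPP11}, Lemma 4.17, to compare with $\|\sum_k x_k\|_{E(\cM)}$; you would need a similar interpolation argument, not a $2$-convexity assumption, for the diagonal estimate.
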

\begin{proof}
We first prove that the maximum on the right hand side is dominated by $\|\sum_k x_k\|_{E(\cM)}$. Recall that $L^q(\cM)$ has cotype $q$ if $2\leq q<\infty$, i.e.,
$$\|\mathrm{diag}(x_k)_{k=1}^n\|_{L^q(M_n(\cM))} = \Big(\sum_{k=1}^n \|x_k\|_{L^q(\cM)}^q\Big)^{\frac{1}{q}} \leq \Big\|\sum_{k=1}^n r_k\ot x_k\Big\|_{L^q(L^{\infty}\vNT\cM)}.$$
By interpolating this estimate for $q=2$ and $q>q_E$ we obtain
$$\|\mathrm{diag}(x_k)_{k=1}^n\|_{E(M_n(\cM))} \lesssim_E \Big\|\sum_{k=1}^n r_k\ot x_k\Big\|_{E(L^{\infty}\vNT\cM)}.$$
Moreover, by \cite{DPP11}, Lemma 4.17,
$$\Big\|\sum_k r_k\ot x_k\Big\|_{E(L^{\infty}\vNT\cM)}\simeq_E\Big\|\sum_k x_k\Big\|_{E(\cM)}.$$
Since $1<p_{E_{(2)}}\leq q_{E_{(2)}}<\infty$, we obtain by applying the noncommutative dual Doob inequality (Corollary~\ref{cor:dualDoobSymm}) in $E_{(2)}(\cM)$,
\begin{eqnarray*}
\Big\|\Big(\sum_k\cE_{k-1}(x_k^*x_k)\Big)^{\frac{1}{2}}\Big\|_{E(\cM)} & = & \Big\|\sum_k\cE_{k-1}(x_k^*x_k)\Big\|_{E_{(2)}(\cM)}^{\frac{1}{2}} \\
& \lesssim_E & \Big\|\sum_k x_k^*x_k\Big\|_{E_{(2)}(\cM)}^{\frac{1}{2}} = \Big\|\Big(\sum_k x_k^*x_k\Big)^{\frac{1}{2}}\Big\|_{E(\cM)}.\\
\end{eqnarray*}
Therefore, by the noncommutative Burkholder-Gundy inequality (\cite{DPP11}, Proposition 4.18) we conclude that
$$\Big\|\Big(\sum_k\cE_{k-1}(x_k^*x_k)\Big)^{\frac{1}{2}}\Big\|_{E(\cM)} \lesssim_E \Big\|\Big(\sum_k x_k^*x_k\Big)^{\frac{1}{2}}\Big\|_{E(\cM)} \lesssim_E \Big\|\sum_k x_k\Big\|_{E(\cM)}$$
and by applying this to the sequence $(x_k^*)$ we get
$$\Big\|\Big(\sum_k\cE_{k-1}(x_k x_k^*)\Big)^{\frac{1}{2}}\Big\|_{E(\cM)} \lesssim_E \Big\|\sum_k x_k\Big\|_{E(\cM)}.$$
We now prove the reverse inequality in (\ref{eqn:BurkRos}). By the
noncommutative Burkholder-Gundy inequality,
\begin{equation}
\label{eqn:BRrandKhin}
\Big\|\sum_k x_k\Big\|_{E(\cM)} \lesssim_E\max\Big\{\Big\|\Big(\sum_k x_k^*x_k\Big)^{\frac{1}{2}}\Big\|_{E(\cM)}, \Big\|\Big(\sum_k x_k x_k^*\Big)^{\frac{1}{2}}\Big\|_{E(\cM)}\Big\}.
\end{equation}
By the quasi-triangle inequality in $E_{(2)}(\cM)$ we have
\begin{align}
\label{eqn:firstEstBurkRosSymm}
& \Big\|\Big(\sum_k x_k^*x_k\Big)^{\frac{1}{2}}\Big\|_{E(\cM)} \nonumber \\
& \ \ \lesssim_E \Big(\Big\|\sum_k x_k^*x_k-\cE_{k-1}(x_k^*x_k)\Big\|_{E_{(2)}(\cM)} + \Big\|\sum_k \cE_{k-1}(x_k^*x_k)\Big\|_{E_{(2)}(\cM)}\Big)^{\frac{1}{2}}.
\end{align}
Notice that $(|x_k|^2-\cE_{k-1}(|x_k|^2))_{k\geq 1}$ is a martingale difference sequence in $E_{(2)}(\cM)$. Since $1<p_{E_{(2)}}\leq q_{E_{(2)}}<\infty$ we find by the noncommutative Burkholder-Gundy inequality
\begin{align*}
\Big\|\sum_k x_k^*x_k-\cE_{k-1}(x_k^*x_k)\Big\|_{E_{(2)}(\cM)} \lesssim_E \Big\|\Big(\sum_k (x_k^*x_k-\cE_{k-1}(x_k^*x_k))^{2}\Big)^{\frac{1}{2}}\Big\|_{E_{(2)}(\cM)} \\
\lesssim_E \Big\|\Big(\sum_k|x_k|^4\Big)^{\frac{1}{2}}\Big\|_{E_{(2)}(\cM)} + \Big\|\Big(\sum_k(\cE_{k-1}|x_k|^2)^2\Big)^{\frac{1}{2}}\Big\|_{E_{(2)}(\cM)},
\end{align*}
where in the final inequality we use the quasi-triangle inequality
in $E_{(2)}(\cM;l^2_c)$. By applying the noncommutative Stein
inequality (Corollary~\ref{cor:SteinNC}) to the second term on the
right-hand side, we find that
$$\Big\|\sum_k x_k^*x_k-\cE_{k-1}(x_k^*x_k)\Big\|_{E_{(2)}(\cM)} \lesssim_E \Big\|\Big(\sum_k|x_k|^4\Big)^{\frac{1}{2}}\Big\|_{E_{(2)}(\cM)}.$$
Let $x=\mathrm{col}(|x_k|)$ and $y=\mathrm{diag}(|x_k|)$. Since $\mu(yx)\prec\prec\mu(x)\mu(y)$, it follows from the Calder\'{o}n-Mitjagin theorem (\cite{KPS82}, Theorem
II.3.4) that there is a contraction $T$ for the couple $(L^1,L^{\infty})$ such that $\mu(yx)=T(\mu(x)\mu(y))$. Therefore,
\begin{eqnarray}
\label{eqn:sumFourthPowEst}
\Big\|\Big(\sum_k|x_k|^4\Big)^{\frac{1}{2}}\Big\|_{E_{(2)}(\cM)} & = & \|(x^*y^*yx)^{\frac{1}{2}}\|_{E_{(2)}(M_{n}(\cM))} \nonumber\\
& = & \|y x\|_{E_{(2)}(M_{n}(\cM))} \lesssim_E \|\mu(x)\mu(y)\|_{E_{(2)}} \nonumber\\
& = & \|\mu(x)^{\frac{1}{2}}\mu(y)^{\frac{1}{2}}\|_{E}^2 \leq \|y\|_{E(M_{n}(\cM))}\|x\|_{E(M_{n}(\cM))} \nonumber\\
& = & \|\mathrm{diag}(x_k)\|_{E(M_{n}(\cM))} \Big\|\Big(\sum_k|x_k|^2\Big)^{\frac{1}{2}}\Big\|_{E(\cM)},
\end{eqnarray}
where in the final inequality we use H\"{o}lder's inequality. Putting our estimates together, starting from (\ref{eqn:firstEstBurkRosSymm}), we arrive at
\begin{align*}
& \Big\|\Big(\sum_k|x_k|^2\Big)^{\frac{1}{2}}\Big\|_{E(\cM)} \\
& \ \ \lesssim_E \Big(\|\mathrm{diag}(x_k)\|_{E(M_n(\cM))} \Big\|\Big(\sum_k|x_k|^2\Big)^{\frac{1}{2}}\Big\|_{E(\cM)} + \Big\|\Big(\sum_k
\cE_{k-1}|x_k|^2\Big)^{\frac{1}{2}}\Big\|_{E(\cM)}^2\Big)^{\frac{1}{2}}.
\end{align*}
In other words, if we set $a=\|(\sum_k|x_k|^2)^{\frac{1}{2}}\|_{E(\cM)}$, $b=\|\mathrm{diag}(x_k)\|_{E(M_{n}(\cM))}$ and $c=\|(\sum_k
\cE_{k-1}|x_k|^2)^{\frac{1}{2}}\|_{E(\cM)}$, we have $a^2\lesssim_E ab+c^2$. Solving this quadratic equation we obtain $a\lesssim_E \max\{b,c\}$, or,
\begin{equation*}
\Big\|\Big(\sum_k|x_k|^2\Big)^{\frac{1}{2}}\Big\|_{E(\cM)} \lesssim_E \max\Big\{\|\mathrm{diag}(x_k)\|_{E(M_{n}(\cM))}, \Big\|\Big(\sum_k
\cE_{k-1}|x_k|^2\Big)^{\frac{1}{2}}\Big\|_{E(\cM)}\Big\}.
\end{equation*}
Applying this to the sequence $(x_k^*)$ gives
\begin{equation*}
\Big\|\Big(\sum_k|x_k^*|^2\Big)^{\frac{1}{2}}\Big\|_{E(\cM)} \lesssim_E \max\Big\{\|\mathrm{diag}(x_k)\|_{E(M_{n}(\cM))}, \Big\|\Big(\sum_k
\cE_{k-1}|x_k^*|^2\Big)^{\frac{1}{2}}\Big\|_{E(\cM)}\Big\}.
\end{equation*}
The result now follows by (\ref{eqn:BRrandKhin}).
\epf
\end{proof}
\begin{remark}
Of course, if $E$ is the K\"{o}the dual of a separable symmetric Banach function space on $\R_+$, then by virtue of Theorem~\ref{thm:DoobMax} we may replace $\|\sum_{k=1}^n x_k\|_{E(\cM)}$ by $\|(y_k)_{k=1}^n\|_{E(\cM;l^{\infty})}$ in (\ref{eqn:BurkRos}), where $y_k=\sum_{i=1}^k x_i$.
\end{remark}

\appendix

\section{Original approach to Boyd's theorem}

\noindent In this appendix we prove a full noncommutative analogue of Boyd's interpolation theorem, which allows for interpolation of operators of weak type, instead of
only operators of Marcinkiewicz weak type. We adapt the original proof of Boyd \cite{Boy69}.\par The first result is an extension of Calder\'{o}n's characterization of
weak type operators (\cite{Cal66}, Theorem 8). For $0<p<q<\infty$ define Calder\'{o}n's operator by
$$S_{p,q}f(t) = t^{-\frac{1}{p}}\int_0^t s^{\frac{1}{p}}f(s) \frac{ds}{s} + t^{-\frac{1}{q}} \int_t^{\infty}s^{\frac{1}{q}}f(s) \frac{ds}{s} \ \qquad (t>0, \ f\in S(\R_+))$$
and set
$$S_{p,\infty}f(t) = t^{-\frac{1}{p}}\int_0^t s^{\frac{1}{p}}f(s) \frac{ds}{s} \ \qquad (t>0, \ f\in S(\R_+)).$$
\begin{theorem}
\label{thm:CalChar} Let $\cM,\cN$ be von Neumann algebras equipped with normal, semi-finite, faithful traces $\tr$ and $\si$, respectively. Let $0<p<q\leq \infty$ and
suppose that $T:L^{p,1}(\cM)_+ + L^{q,1}(\cM)_+\rightarrow S(\si)_+$ is a midpoint convex map satisfying
\begin{equation}
\label{eqn:CalWeakEst} \|Tx\|_{L^{r,\infty}(\cN)}\leq C_r \|x\|_{L^{r,1}(\cM)} \qquad (x \in L^{r,1}(\cM)_+, \ r=p,q).
\end{equation}
Then,
$$\mu_t(Tx)\lesssim_{p,q} \max\{C_p,C_q\} \ S_{p,q}\mu(x)(t) \qquad (t>0).$$
\end{theorem}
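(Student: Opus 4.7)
The plan is to adapt Calder\'{o}n's classical decomposition argument to the noncommutative setting, using the functional calculus of $x$ in place of pointwise splitting. Fix $t>0$ and set $v=\mu_t(x)$. Since $x\in L^{p,1}(\cM)_++L^{q,1}(\cM)_+$ is positive, one can define via functional calculus
$$x=(x-v)_++(x\wedge v)=:x_1+x_2,$$
with $x_1,x_2\in S(\tr)_+$. Proposition~\ref{pro:DRProperties}(g) applied to the left-continuous functions $\phi_1(r)=(r-v)_+$ and $\phi_2(r)=r\wedge v$ gives
$$\mu_s(x_1)=(\mu_s(x)-v)_+,\qquad \mu_s(x_2)=\mu_s(x)\wedge v;$$
in particular $x_1$ has rearrangement supported on $\{s<d(v;x)\}$ with $d(v;x)\leq t$, and $\|x_2\|_\infty\leq v=\mu_t(x)$.

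Direct integration then yields
$$\|x_1\|_{L^{p,1}(\cM)}=\int_0^{d(v;x)}s^{\frac{1}{p}-1}(\mu_s(x)-v)\,ds\leq\int_0^t s^{\frac{1}{p}-1}\mu_s(x)\,ds,$$
and, for $q<\infty$, splitting the defining integral of $\|x_2\|_{L^{q,1}}$ at $s=d(v;x)$ and using $\mu_s(x)\leq v$ on $(d(v;x),t]$,
$$\|x_2\|_{L^{q,1}(\cM)}\leq 2q\,t^{\frac{1}{q}}\mu_t(x)+\int_t^{\infty}s^{\frac{1}{q}-1}\mu_s(x)\,ds.$$
Since $x_1,x_2\geq 0$, both lie in the domain $L^{p,1}(\cM)_++L^{q,1}(\cM)_+$ of $T$.

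Writing $x=\tfrac{1}{2}(2x_1)+\tfrac{1}{2}(2x_2)$, midpoint convexity of $T$ gives $Tx\leq\tfrac{1}{2}T(2x_1)+\tfrac{1}{2}T(2x_2)$, and Proposition~\ref{pro:DRProperties}(b) yields $\mu_t(Tx)\leq\tfrac{1}{2}\mu_{t/2}(T(2x_1))+\tfrac{1}{2}\mu_{t/2}(T(2x_2))$. Using $\|y\|_{L^{r,\infty}(\cN)}=\sup_s s^{1/r}\mu_s(y)$ together with (\ref{eqn:CalWeakEst}), for $(i,r)\in\{(1,p),(2,q)\}$,
$$\mu_{t/2}(T(2x_i))\leq (t/2)^{-1/r}\|T(2x_i)\|_{L^{r,\infty}(\cN)}\leq 2^{1+1/r}C_r\,t^{-1/r}\|x_i\|_{L^{r,1}(\cM)}.$$
Plugging in the bounds from the previous paragraph, the terms $t^{-1/p}\int_0^t s^{1/p-1}\mu_s(x)\,ds$ and $t^{-1/q}\int_t^{\infty}s^{1/q-1}\mu_s(x)\,ds$ reproduce exactly the two summands of $S_{p,q}\mu(x)(t)$, while the residual $q\mu_t(x)$ term is absorbed via $\mu_t(x)\leq p^{-1}S_{p,q}\mu(x)(t)$, which follows from $S_{p,q}\mu(x)(t)\geq t^{-1/p}\mu_t(x)\int_0^t s^{1/p-1}\,ds=p\mu_t(x)$. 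The case $q=\infty$ is handled identically after replacing the $x_2$-bound by $\|T(2x_2)\|_\infty\leq 2C_\infty\|x_2\|_\infty\leq 2C_\infty\mu_t(x)$.

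The technical core is the pair of spectral identities for $\mu_s(x_1),\mu_s(x_2)$ and the verification that the positive decomposition $x=x_1+x_2$ lies in the domain of $T$; both follow cleanly from Proposition~\ref{pro:DRProperties}(g) once one uses that $x$ is positive, so no polar decomposition is needed. Beyond this, the argument is quantitative bookkeeping mirroring the commutative proof, with the noncommutativity entering only through Proposition~\ref{pro:DRProperties}(b) (the $\mu$-triangle inequality) and the definition of midpoint convexity.
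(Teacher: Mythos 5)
Your proposal is correct and follows essentially the same route as the paper: the spectral cut $x=(x-\mu_t(x))_+ + x\wedge\mu_t(x)$, the identity $\mu(\phi(x))=\phi(\mu(x))$, the midpoint-convexity split $\mu_t(Tx)\leq\tfrac12\mu_{t/2}(T(2x_1))+\tfrac12\mu_{t/2}(T(2x_2))$, the weak-type bounds in Lorentz norms, and absorption of the residual $\mu_t(x)$ term into the first integral of $S_{p,q}\mu(x)(t)$. The only deviations are bookkeeping (you bound $\|x_i\|_{L^{r,1}}$ directly by integrals of $\mu(x)$, whereas the paper uses $\mu(x)=\mu(x_t^1)+\mu(x_t^2)$ and linearity of $S_{p,q}$), which changes nothing of substance.
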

\begin{proof}
Suppose that $q<\infty$. We may assume that $\max\{C_p,C_q\}\leq 1$. Fix $t>0$, let $x \in L^{p,1}(\cM)_+ + L^{q,1}(\cM)_+$ and set $\delta=\mu_t(x)$. Define $x_t^1 = (x-\delta)e^{x}(\delta,\infty)$ and $x_t^2 = x - x_t^1$, so $x_t^2 =
xe^x[0,\delta]+\delta e^x(\delta,\infty)$. Observe that $x_t^1,x_t^2\geq 0$ as $x\geq 0$. Define two increasing, continuous functions $\phi_1,\phi_2:\R_+\rightarrow \R_+$ by
$$\phi_1(u) = (u-\delta)\chi_{\{u>\delta\}}, \qquad \phi_2(u) = \delta\chi_{\{u>\delta\}} + u\chi_{\{u\leq\delta\}}.$$
From proposition~\ref{pro:DistProperties} it follows that
\begin{align*}
\mu(x_t^1) & = \mu(\phi_1(x)) = \phi_1(\mu(x)) = (\mu(x) - \delta)\chi_{\{\mu(x)\geq\delta\}};\\
\mu(x_t^2) & = \mu(\phi_2(x)) = \phi_2(\mu(x)) = \delta\chi_{\{\mu(x)>\delta\}} + \mu(x)\chi_{\{\mu(x)\leq\delta\}}.
\end{align*}
Using that $\delta=\mu_t(x)$ we obtain
\begin{align*}
\mu_s(x_t^1) & = (\mu_s(x) - \delta)\chi_{[0,t]}(s) \qquad (s>0);\\
\mu_s(x_t^2) & = \delta\chi_{[0,t]}(s) + \mu_s(x)\chi_{(t,\infty)}(s) \qquad (s>0).
\end{align*}
In particular, $\mu(x) = \mu(x_t^1) + \mu(x_t^2)$. By midpoint convexity,
\begin{eqnarray*}
\mu_t(Tx) & \leq & \mu_t(\tfrac{1}{2}T(2x_t^1) + \tfrac{1}{2}T(2x_t^2)) \\
& \leq & \tfrac{1}{2}\mu_{\tfrac{t}{2}}(T(2x_t^1)) + \tfrac{1}{2}\mu_{\frac{t}{2}}(T(2x_t^2)).
\end{eqnarray*}
By (\ref{eqn:CalWeakEst}) we obtain
\begin{equation*}
\mu_{\frac{t}{2}}(T(2x_t^1)) \lesssim_p t^{-\frac{1}{p}}\|x_t^1\|_{L^{p,1}(\cM)} = t^{-\frac{1}{p}}\int_0^t s^{\frac{1}{p}}\mu_s(x_t^1) \frac{ds}{s} =
S_{p,q}\mu(x_t^1)(t)
\end{equation*}
and, moreover,
\begin{eqnarray*}
\mu_{\frac{t}{2}}(T(2x_t^2)) & \lesssim_q & t^{-\frac{1}{q}}\|x_t^2\|_{L^{q,1}(\cM)}\\
& = & t^{-\frac{1}{q}} \int_0^{\infty}s^{\frac{1}{q}}\mu_s(x_t^2) \frac{ds}{s} \\
& = & t^{-\frac{1}{q}} \int_0^{t}s^{\frac{1}{q}}\delta \frac{ds}{s} + t^{-\frac{1}{q}} \int_t^{\infty}s^{\frac{1}{q}}\mu_s(x) \frac{ds}{s}.
\end{eqnarray*}
Since
$$t^{-\frac{1}{q}} \int_0^{t}s^{\frac{1}{q}}\delta \frac{ds}{s} = q\delta = \frac{q}{p} t^{-\frac{1}{p}} \int_0^{t}s^{\frac{1}{p}}\delta \frac{ds}{s},$$
it follows that
$$\mu_{\frac{t}{2}}(T(2x_t^2)) \lesssim_{p,q} S_{p,q}\mu(x_t^2)(t).$$
Putting our estimates together, we conclude that
$$\mu_t(Tx)\lesssim_{p,q} S_{p,q}\mu(x_t^1)(t) + S_{p,q}\mu(x_t^2)(t) = S_{p,q}\mu(x)(t).$$
The case $q=\infty$ is proved similarly.
\end{proof}
The following observation for symmetric Banach function spaces and $p,q\geq 1$ is the main result of \cite{Boy69}.
The general case is proved using essentially the same argument (see \cite{Mon96}, Theorem 2).
\begin{theorem}
\label{thm:BoydObsOrig} If $E$ is a symmetric quasi-Banach function space on $\R_+$, then the following hold.
\begin{enumerate}
\item If $0<p<q<\infty$, then $S_{p,q}$ is bounded on $E$ if and only if $p<p_E\leq q_E<q$.
\item If $0<p<\infty$, then $S_{p,\infty}$ is bounded on $E$ if and only if $p<p_E$.
\end{enumerate}
\end{theorem}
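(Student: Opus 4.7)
The plan is to split $S_{p,q}$ into its two natural positive pieces,
\[
S_{p,q}^{(1)}f(t)=t^{-\frac{1}{p}}\int_0^t s^{\frac{1}{p}}f(s)\frac{ds}{s},\qquad S_{p,q}^{(2)}f(t)=t^{-\frac{1}{q}}\int_t^\infty s^{\frac{1}{q}}f(s)\frac{ds}{s},
\]
and to prove separately that $S_{p,q}^{(1)}$ is bounded on $E$ if and only if $p<p_E$, and (when $q<\infty$) that $S_{p,q}^{(2)}$ is bounded on $E$ if and only if $q_E<q$. Positivity of both pieces makes $S_{p,q}$ bounded on $E_+$ iff each is, and the symmetry of $E$ then upgrades this to all of $E$; this treats (a) and (b) simultaneously, with $S_{p,\infty}^{(2)}\equiv 0$ so that (b) only uses the lower-Boyd half. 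The bridge to the Boyd indices is the change of variable $u=t/s$ in $S_{p,q}^{(1)}$ (respectively $u=s/t$ in $S_{p,q}^{(2)}$), which rewrites each piece as a continuous weighted superposition of dilations:
\[
S_{p,q}^{(1)}f(t)=\int_1^\infty u^{-\frac{1}{p}-1}(D_{1/u}f)(t)\,du,\qquad S_{p,q}^{(2)}f(t)=\int_1^\infty u^{\frac{1}{q}-1}(D_u f)(t)\,du.
\]

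For the sufficient direction, I would reduce to decreasing $f\in E_+$ and decompose $[1,\infty)=\bigsqcup_{n\geq 0}[2^n,2^{n+1})$. On each dyadic piece the integrand is dominated pointwise by the dilation at the right endpoint (since $u\mapsto D_{1/u}f$ is increasing for decreasing $f$), yielding $S_{p,q}^{(1)}f\lesssim_p\sum_n 2^{-n/p}D_{1/2^{n+1}}f$ pointwise. Applying the Aoki--Rolewicz $r$-norm from Theorem~\ref{thm:Aoki-RolewiczOrig} produces
\[
\|S_{p,q}^{(1)}f\|_E^r\lesssim_p\sum_{n\geq 0} 2^{-nr/p}\|D_{1/2^n}\|_E^r\|f\|_E^r,
\]
and choosing $p<p_0<p_E$ so that $\|D_{1/2^n}\|_E\leq c_{p_0}2^{n/p_0}$ via (\ref{eqn:BoydIndexLowerDef}) makes this geometric series converge. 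The argument for $S_{p,q}^{(2)}$ under $q_E<q$ is entirely symmetric, using the companion characterization of $q_E$ stated just after Lemma~\ref{lem:dilation}.

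For necessity, the same integral representation supplies a matching one-sided lower bound: restricting the integral to $u\in[A,2A]$ for a decreasing $f$ gives $S_{p,q}^{(1)}f\geq c_p A^{-1/p}D_{1/A}f$, whence boundedness of $S_{p,q}^{(1)}$ with norm $K$ forces $\|D_{1/A}\|_E\leq (K/c_p)A^{1/p}$ for every $A\geq 1$, so that $p_E\geq p$; symmetrically $q_E\leq q$. The main obstacle is upgrading these to the \emph{strict} inequalities $p<p_E$ and $q_E<q$, where the plain comparison with a single dilation is not sharp enough. My plan to close this gap is to exploit the explicit action of $S_{p,q}^{(1)}$ on the critical function $\psi_p$: a direct computation gives $S_{p,q}^{(1)}\psi_p(t)=t^{-1/p}(\log t)\chi_{[1,\infty)}(t)$ and by induction $(S_{p,q}^{(1)})^n\psi_p(t)=t^{-1/p}(\log t)^n/n!\cdot\chi_{[1,\infty)}(t)$, so that boundedness of $S_{p,q}^{(1)}$ would force every logarithmic moment $\|\psi_p(\log\cdot)^n\chi_{[1,\infty)}\|_E$ to grow no faster than $K^n n!\|\psi_p\|_E$. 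Combined with sub-multiplicativity of $A\mapsto\|D_{1/A}\|_E$ (Lemma~\ref{lem:dilation}), which guarantees that $\lim_{A\to\infty}\log\|D_{1/A}\|_E/\log A=\inf_{A>1}\log\|D_{1/A}\|_E/\log A=1/p_E$, these logarithmic growth estimates rule out the critical value $1/p_E=1/p$ and force $p_E>p$; the strict bound $q_E<q$ follows by the mirror argument for $S_{p,q}^{(2)}$.
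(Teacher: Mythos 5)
Your sufficiency half is fine: after restricting to decreasing $f\in E_+$, the dyadic discretization of the representation $S_{p,q}^{(1)}f(t)=\int_1^\infty u^{-1/p-1}(D_{1/u}f)(t)\,du$ plus Aoki--Rolewicz and (\ref{eqn:BoydIndexLowerDef}) is exactly the technique of Lemma~\ref{lem:distEstPsi}, and the paper itself does not reprove the theorem but cites \cite{Boy69} and \cite{Mon96}. Two caveats there: in the quasi-Banach range the theorem cannot be read as boundedness on all of $E$ (for $E=L^r$ with $r<1$ a spike $f=\chi_{[a,a+\eps]}$ shows both pieces of $S_{p,q}$ are unbounded on $L^r$ even when $p<r<q$), so "bounded on $E$" must be understood on the decreasing cone, which is what Theorem~\ref{thm:BoydOrigApp} uses; your proposed "upgrade to all of $E$ by symmetry" is not available, and the reduction to decreasing $f$ is where the statement lives, not a harmless normalization.

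The genuine gap is the strictness step in the necessity direction. From $\|(S^{(1)}_{p,q})^n\psi_p\|_E\leq K^n n!\,\|\psi_p\|_E$ together with submultiplicativity of $A\mapsto\|D_{1/A}\|$ (which gives $1/p_E=\inf_{A>1}\log\|D_{1/A}\|/\log A$) one cannot conclude $p_E>p$: these two facts retain only the norms of the particular functions $t^{-1/p}(\log t)^n\chi_{[1,\infty)}$, and they are compatible with $p_E=p$. Concretely, take $\|f\|_E=\sup_{t>0}w(t)\mu_t(f)$ with $w(t)=t^{1/p}e^{-(\log_+ t)^2}$; this is a symmetric quasi-Banach function space with $\|D_{1/A}\|\simeq A^{1/p}$, hence $p_E=p$, while $\|t^{-1/p}(\log t)^n\chi_{[1,\infty)}\|_E\lesssim_p C^n n!$, so your two retained hypotheses hold and yet the critical value is not ruled out. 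The information you must not throw away is the operator-level lower bound coming from the $n$-fold kernel: since $(S^{(1)}_{p,q})^nf(t)=\int_1^\infty u^{-1/p}\tfrac{(\log u)^{n-1}}{(n-1)!}f(t/u)\tfrac{du}{u}$, restricting the integral to $u\in[A/2,A]$ gives, for every decreasing $f\geq0$ and $A\geq2$, the pointwise bound $(S^{(1)}_{p,q})^nf\geq c_p A^{-1/p}\tfrac{(\log(A/2))^{n-1}}{(n-1)!}D_{1/A}f$, hence $\|D_{1/A}\|\leq c_p^{-1}A^{1/p}(n-1)!\,K^n(\log(A/2))^{-(n-1)}$ (recall $\mu(D_{1/A}f)=D_{1/A}\mu(f)$, so the dilation norm is computed on decreasing functions). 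Optimizing $n\approx\log(A/2)/K$ yields $\|D_{1/A}\|\lesssim_{p}K\,A^{1/p-c/K}$ for all $A$, which is the strict inequality $p_E>p$; the mirror argument on $S^{(2)}_{p,q}$ gives $q_E<q$. (Also note that to even start your iteration you need $\psi_p\in E$; this does follow from boundedness, since $p\,\psi_p\leq S^{(1)}_{p,q}\chi_{(0,1)}$ and $\chi_{(0,1)}\in E$, but it should be said.) With the iteration applied to arbitrary decreasing $f$ rather than to the single test function $\psi_p$, your plan becomes a correct proof and is in the spirit of Boyd's original argument that the paper cites.
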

By combining the observations in Theorems~\ref{thm:CalChar} and \ref{thm:BoydObsOrig} we find the following noncommutative extension of Boyd's theorem.
\begin{theorem}
\label{thm:BoydOrigApp} Let $E$ be a symmetric quasi-Banach function space on $\R_+$ which is $s$-convex for some $0<s<\infty$. Let $\cM,\cN$ be von Neumann algebras
equipped with normal, semi-finite, faithful traces $\tr$ and $\si$, respectively. Suppose that $0<p<q\leq\infty$ and let $T:L^{p,1}(\cM)_+ + L^{q,1}(\cM)_+\rightarrow
S(\si)_+$ be a midpoint convex map such that
\begin{equation*}
\|Tx\|_{L^{r,\infty}(\cN)} \leq C_r \|x\|_{L^{r,1}(\cM)} \qquad (x \in L^{r,1}(\cM)_+, \ r=p,q).
\end{equation*}
If $p<p_E\leq q_E<q<\infty$ or $p<p_E$ and $q=\infty$, then
$$\|Tx\|_{E(\cN)} \lesssim_{p,q} \max\{C_p,C_q\} \|S_{p,q}\|_{E\rightarrow E} \ \|x\|_{E(\cM)} \qquad (x \in E(\cM)_+).$$
\end{theorem}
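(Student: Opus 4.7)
The plan is to combine the two preceding results directly: Theorem~\ref{thm:CalChar} yields a pointwise bound on the decreasing rearrangement of $Tx$ in terms of Calder\'on's operator applied to $\mu(x)$, and Theorem~\ref{thm:BoydObsOrig} ensures that this operator is bounded on $E$ under the Boyd index hypotheses. Because the definition $\|y\|_{E(\cN)} = \|\mu(y)\|_E$ transports inequalities between decreasing rearrangements into inequalities between noncommutative symmetric norms, this combination should yield the claim immediately.

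First I would fix $x \in E(\cM)_+$. By Lemma~\ref{lem:BoydInclusions} (applied with $p$ replaced by some $p_0$ with $p < p_0 < p_E$, using that $L^{p_0}(\cM) \subset L^{p,1}(\cM)$ up to the appropriate inclusions, or directly by noting that the Boyd index condition places $E(\cM)$ inside the domain of $T$), the hypotheses guarantee that $Tx$ is well-defined. Apply Theorem~\ref{thm:CalChar} to obtain
\begin{equation*}
\mu_t(Tx) \lesssim_{p,q} \max\{C_p,C_q\}\, S_{p,q}\mu(x)(t) \qquad (t>0).
\end{equation*}
Next, since $E$ is a symmetric quasi-Banach function space, taking $E$-(quasi)norms preserves the inequality:
\begin{equation*}
\|Tx\|_{E(\cN)} = \|\mu(Tx)\|_E \lesssim_{p,q} \max\{C_p,C_q\}\, \|S_{p,q}\mu(x)\|_E.
\end{equation*}
Under the hypothesis $p < p_E \leq q_E < q < \infty$ (resp.\ $p < p_E$ and $q = \infty$), Theorem~\ref{thm:BoydObsOrig}(a) (resp.\ (b)) says $S_{p,q}$ is a bounded operator on $E$. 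Hence
\begin{equation*}
\|S_{p,q}\mu(x)\|_E \leq \|S_{p,q}\|_{E \to E}\, \|\mu(x)\|_E = \|S_{p,q}\|_{E \to E}\, \|x\|_{E(\cM)},
\end{equation*}
which combined with the previous display gives the desired estimate.

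There is essentially no hard step in this proof; the heavy lifting has already been done in Theorems~\ref{thm:CalChar} and \ref{thm:BoydObsOrig}. The only point requiring a small check is that $T$ is actually defined on $E(\cM)_+$, which is handled by observing that the Boyd index hypotheses together with the $s$-convexity of $E$ place $E(\cM)$ inside $L^{p,1}(\cM) + L^{q,1}(\cM)$ (essentially the content of Lemma~\ref{lem:BoydInclusions}, possibly after shrinking $p$ and enlarging $q$ within the allowed range), so that $Tx$ makes sense and the chain of inequalities above is legitimate.
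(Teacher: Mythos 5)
Your argument is exactly the paper's: Theorem~\ref{thm:CalChar} gives $\mu_t(Tx)\lesssim_{p,q}\max\{C_p,C_q\}\,S_{p,q}\mu(x)(t)$, and Theorem~\ref{thm:BoydObsOrig} plus the symmetry of $E$ converts this into the stated norm estimate, which is precisely how the paper deduces Theorem~\ref{thm:BoydOrigApp}. The only loose end, the embedding $E(\cM)_+\subset L^{p,1}(\cM)_++L^{q,1}(\cM)_+$ ensuring $Tx$ is defined, you handle in the same implicit spirit as the paper, so the proposal is correct and essentially identical in approach.
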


\section*{Acknowledgement}

\noindent I would like to thank the anonymous reviewer for his/her comments on an earlier version of this paper and the resulting improvements.

\bibliographystyle{plain}
\bibliography{Boyd_Accepted}

\end{document}